\title{Packing measure of the linear Gauss system}
\author{Rafal Tryniecki }
\date{}
\newenvironment{customthm}[1]
  {\innercustomthm}
  {\endinnercustomthm}
\newenvironment{customlem}[1]
  {\innercustomlem}
  {\endinnercustomlem}
\newtheorem{theorem}{Theorem}[section]
\newtheorem{defi}[theorem]{Definition}
\newtheorem{notation}[theorem]{Notation}
\newtheorem{prop}[theorem]{Proposition}
\newtheorem{lemma}[theorem]{Lemma}
\newtheorem{corollary}[theorem]{Corollary}
\newtheorem{observation}[theorem]{Observation}
\newtheorem*{theorem*}{Theorem}
\begin{document}

\maketitle
\begin{abstract}
For every $k \in \mathbb{N}$ let  $f_k:[\frac{1}{k+1}, \frac{1}{k}] \to [0,1]$ be decreasing, linear functions such that $f_k(\frac{1}{k+1}) = 1$ and $f_k(\frac{1}{k}) = 0$, $k = 1, 2, \dots$. We define iterated function system (IFS) $S_n$ by limiting the collection of functions $f_k$ to first n, meaning $S_n = \{f_k \}_{k=1}^n$. Let $J_n$ denote the limit set of $S_n$. Then $\lim\limits_{n\to \infty} \mathcal{P}_{h_n}(J_n) = 2$, where $h_n$ is the packing dimension of $J_n$ and $\mathcal{P}_{h_n}$ is the corresponding packing measure.
\end{abstract}
\section{Introduction}
\par Let $g_k(x) = \rho_k x + b_k$,  \mbox{($k = 1, 2, \dots, m$)} be a collection of linear contractions defined on the interval $[0,1]$ and such that $g_k([0,1])\subset [0,1]$. Assume additionally that this collection satisfies Open Set Condition (defined in Definition \ref{def: open set condition}). It is well known that the packing dimension of the limit set (defined as in Definition \ref{not: limit set})  is equal to $\alpha$, the unique positive solution of the implicit equation $\sum\limits_{k = 1}^m \rho_k^\alpha = 1$; the $\alpha$–dimensional packing measure $\mathcal{P}_\alpha (K)$ and the $\alpha$–dimensional Hausdorff measure $\mathcal{H}^\alpha (K)$, both are finite and positive.
In 1999 E. Ayer and R. S. Strichartz \cite{AyerStri} provided an algorithm for calculating the Hausdorff measure of the limit set of iterated function system consisting of maps $g_k$. In 2003 D. Feng \cite{Feng} provided similar formula for the packing measure in the same setup.
\par On the other hand, let $f_k:[\frac{1}{k+1},\frac{1}{k}] \to [0,1]$, $f_k(x) = \{\frac{1}{x}\}$ defines the well-known Gauss map. Then, let $g_k: [0,1] \to [\frac{1}{k+1}, \frac{1}{k}]$ be a collection of inverse maps $g_k = f_k^{-1}$. For each, we define and iterated function system $S_n$ (IFS) consisting of the maps $g_k$, $k  = 1, \dots , n$. Let $J_n$ be the Julia set (limit set) generated by $S_n$. The asymptotics of Hausdorff dimension of $J_n$ was estimated first in 1929 by V. Jarnik \cite{jarnik1}, and then more precisely in 1992 by Doug Hensley \cite{Hensley}. In 2016 Mariusz Urbański and Anna Zdunik in \cite{UZ} proved, using Hensley's result that for previously mentioned sets, we have continuity of the Hausdorff measure in Hausdorff dimension, meaning
\[
\lim \limits_{n \to \infty}H_{h_n}(J_n)  =  1
\]
where $H_h$ - denotes the numerical value of Hausdorff measure in dimension $h$.
In this paper, we combine these two ideas by considering linear, decreasing functions 
\[
f_k\left(\frac{1}{k+1}\right) = 1  \text{ and } f_k\left(\frac{1}{k}\right) = 0
\]
and their inverses $g_k = f_k^{-1}$. Then, we define and iterated function system $S_n$, consisting of the maps $g_k$, $k  = 1, \dots , n$.  Let $J_n$ be the limit set generated by $S_n$. We prove that the packing measure is continuous, meaning
\begin{customthm}{\ref{thm: granica}}
Let $S_n$ be the iterated function system defined in Definition \ref{IFS Sn}. Then
\[
\lim \limits_{n \to \infty} \mathcal{P}_{h_n}(J_n) = 2
\]
where $J_n$ is the limit set of the IFS $S_n$ and $P_h$ denotes packing measure in packing dimension $h$.
\end{customthm}
The proof splits into two main parts: one is to estimate the packing measure from below, which is done in Section \ref{sec: from below}. This is the easier part of the proof. Then, in Section \ref{sec: from above} we estimate the lower limit of the $n$-th density (see Definition \ref{def: dn}) on different families of sets: starting with sets of form $[0,r]$, and slowly expanding families up to a point from which we can conclude, that the lower limit of $n$-th density on all subintervals of $[0,1]$ centered at $J_n$ is at least $\frac{1}{2}$.
\section{Notation and definitions}
Let $f_k(x):[\frac{1}{k+1}, \frac{1}{k}] \to [0,1]$ for $k \in \{1, 2, \dots \}$ be a linear linear analogue of the Gauss map. This means linear, decreasing function, such that 
\[
f_k\left(\frac{1}{k+1}\right) = 1  \text{ and } f_k\left(\frac{1}{k}\right) = 0
\]
\begin{defi}\label{IFS Sn}
Iterated function system (IFS) $S_n$ is defined by limiting the collection of functions $f_k$ to first $n$, meaning $S_n = \{f_k\}_{k=1}^{n}$. 
\end{defi}
By $g_k$ we will denote the inverse map $f_k^{-1}$. 
\begin{notation}\label{not: limit set}
By $J_n$ we will denote the limit set created by the IFS $S_n$. 
\[
J_n = \bigcap\limits_{l=1}^\infty \bigcup\limits_{q_1, q_2 \dots q_l \in \{1, 2, \dots  n \}^{l} } g_{q_1}\circ g_{q_2} \circ \dots \circ g_{q_l}([0,1])
\]
\end{notation}
\begin{defi}\label{defi: delta packing}
$\delta$-packing of a given set $E\subset\mathbb{R}^n$ is a countable family of pairwise disjoint closed balls of radii at most $\delta$ and with centers in $E$. 
\end{defi}
For $s \geq 0$, the $s$-dimensional packing premeasure of $E$ is defined as follows.
\begin{defi}\label{defi: packing premeasure}
\[
P^{premeasure}_s(E) = \inf\limits_{\delta > 0} \left \{ P_\delta^s  \right\}
\]
where $ P_\delta^s = \sup \{ \sum_{B_i \in \mathcal{R}}  |B_i|^{s}: \mathcal{R} \text{ is a } \delta-\text{packing of E} \}$ and $|B_i|$ denotes the diameter of $B_i$. 
\end{defi}
\begin{defi}\label{defi: packing measure}
The $s$-dimensional packing measure of E is defined as
\[
\mathcal{P}_s(E) = \inf \left \{ \sum\limits_{i=1}^{\infty}P^{premeasure}_s(E_i): E \subset \bigcup\limits_{i = 1}^{\infty} E_i \right\}
\]
\end{defi}
\begin{defi}\label{defi: packing dimension}
The packing dimension of E is by definition the quantity
\[
dim_P(E) := \inf \{s \geq 0: \mathcal{P}_{s}(E) = 0 \} = \sup \{s \geq 0: \mathcal{P}_{s}(E) = \infty \} 
\]
\end{defi}
\begin{notation}\label{hausdorff dimension}
We will denote Hausdorff dimension of the set $J_n$ by $h_n$ and Hausdorff measure of the set A in dimension $h$ by $H_{h}(A)$. 
\end{notation}
\begin{notation}\label{not: packing dimension}
Analogously, by $P_{h^p}^{A}$ we denote the packing measure of set A in the packing dimension $h^p$. By $h^p_n$ we denote the packing dimension of set $J_n$.
\end{notation}
We denote by ${\rm diam}(F)$ the diameter of the set F. We will also use the notation $|F|$ to denote the diameter of the set $F$.
\begin{defi}\label{def: open set condition}
 We say that IFS composed of contractions $\{\phi_i\}_{i = 1}^n$ fulfills Open Set Condition (OSC), if there exists an open set V such that the following two conditions hold
\begin{equation}
    \bigcup\limits_{i=1}^n \phi_i(V) \subseteq V
\end{equation}
and the sets $\phi_i(V)$ are pairwise disjoint.
\end{defi}
Based on the fact that IFS $S_n$ fulfills the Open Set Condition, it is well known that packing dimension and Hausdorff dimensions are equal (see Theorem 2.7 of \cite{Falconerdrugi}) . Due to this fact, we will be using $h_n$ to denote the packing and Hausdorff dimension of the set $J_n$. We know that $h_n$ is a unique solution to the following equation 
\[
\sum\limits_{k=1}^{n} \left( \frac{1}{k} - \frac{1}{k+1}\right)^{h_n} = 1
\]
Proof of this fact can be found in \cite{Falconer}.
It follows from this equation that $\lim\limits_{n \to \infty} h_n = 1$ and $0<h_n<h_{n+1}<1$.
\begin{defi}\label{defi: miara unormowana}
Let $\mathcal{P}^{h_n}$ denote the packing measure in the packing dimension $h_n$. If $0<\mathcal{P}^{h_n}(X)<\infty$ then by $m_n$ we will denote the normalized $\mathcal{P}^{h_n}$ packing measure
\[
m_n(A) := \frac{\mathcal{P}^{h_n}(A \cap X)}{\mathcal{P}^{h_n}(X)}
\]
\end{defi}

Let A be a Borel subset of $[0,1]$. Then for all $k \in \{1,2, \dots, n \}$ 
\[
m_n(g_k(A)) = |g_k'|^{h_n} \cdot m_n(A)
\]
$m_n$ is thus the $h_n$-conformal measure for the set $J_n$. It is worth noting, that $m_n$ is a unique $h_n$-conformal measure on the set $J_n$. This implies, that the following holds true
\[
m_n(A) = \frac{H^{h_n}(A \cap J_n)}{H^{h_n}(J_n)}
\]
Using the fact that $m_n$ is conformal measure, we immediately get the following Lemma, which we will be using throughout our proof.
\begin{lemma}\label{lem: niezmienniczosc gestosci}
Let $A$ be Borel subset of $[0,1]$. Then for all $k \in \{1,2, \dots, n \}$ the following holds
\[
\frac{m_n(A)}{({\rm diam}A)^{h_n}} = \frac{m_n(g_k(A))}{({\rm diam}(g_k(A)))^{h_n}}
\]
\end{lemma}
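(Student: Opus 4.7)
The plan is to exploit two elementary facts that have already been established in the excerpt, namely the $h_n$-conformality of $m_n$ and the linearity of each branch $g_k$. The identity we want is a scale-invariance statement about the ratio of measure to diameter raised to the dimension; morally, applying any similitude with contraction ratio $r$ multiplies the numerator by $r^{h_n}$ and the denominator by $r^{h_n}$, so the ratio is unchanged.

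First, I would write down the conformality relation stated immediately before the lemma: $m_n(g_k(A)) = |g_k'|^{h_n}\, m_n(A)$. Since this holds for every Borel $A\subseteq [0,1]$, it takes care of the numerator on the right-hand side. Second, I would use the fact that each $g_k = f_k^{-1}$ is an affine map on $[0,1]$, so $|g_k'|$ is a constant (equal to the length $\tfrac{1}{k}-\tfrac{1}{k+1}$ of the image interval). For any set $A\subseteq[0,1]$ an affine map multiplies diameters by exactly $|g_k'|$, so
\[
({\rm diam}(g_k(A)))^{h_n} = |g_k'|^{h_n} \cdot ({\rm diam}(A))^{h_n}.
\]

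The final step is just to divide the two displayed equations: the factor $|g_k'|^{h_n}$ appears in both the numerator $m_n(g_k(A))$ and the denominator $({\rm diam}(g_k(A)))^{h_n}$, so it cancels, leaving
\[
\frac{m_n(g_k(A))}{({\rm diam}(g_k(A)))^{h_n}} = \frac{|g_k'|^{h_n}\, m_n(A)}{|g_k'|^{h_n}\, ({\rm diam}(A))^{h_n}} = \frac{m_n(A)}{({\rm diam}(A))^{h_n}},
\]
which is the claim. There is no genuine obstacle here; the only small care needed is to note that $|g_k'|$ is truly constant because the $f_k$ (hence the $g_k$) are linear by construction, so diameters scale exactly, not merely up to bounded distortion. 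Everything else is an immediate invocation of the conformal property of $m_n$.
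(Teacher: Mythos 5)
Your proof is correct and is exactly the argument the paper intends: the paper gives no explicit proof, stating only that the lemma follows ``immediately'' from the conformality relation $m_n(g_k(A)) = |g_k'|^{h_n} m_n(A)$, and your write-up simply fills in that step by noting that the affine map $g_k$ also scales diameters by the constant factor $|g_k'|$, so the factor $|g_k'|^{h_n}$ cancels in the ratio. Nothing is missing.
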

We can develop this further.
\begin{lemma}\label{lem: rozciaganie krotkich przedzialow}
Let $B$ be Borel subset of $ \left(\frac{1}{k+1},\frac{1}{k}\right)$ for some $k \in \{1,2, \dots, n \}$ intersecting $J_n$. Then there exists a set $\hat{B}$ - a Borel subset of $[0,1]$ such that 
\[
\frac{m_n(B)}{({\rm diam}B)^{h_n}} = \frac{m_n(\hat{B})}{({\rm diam}\hat{B})^{h_n}}
\]
and $\frac{1}{j} \in \hat{B}$ for some $j \in \{1,2, \dots, n \}$.
\end{lemma}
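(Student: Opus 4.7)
My idea is to iteratively apply the inverse branches $f_k$ on $B$, enlarging it while preserving the density ratio thanks to Lemma~\ref{lem: niezmienniczosc gestosci}, and to stop as soon as the image no longer fits inside any open interval $\left(\frac{1}{k+1},\frac{1}{k}\right)$ with $k\le n$. The candidate $\hat B$ will then be this final image, and the content of the lemma is the claim that $\hat B$ must contain some $\frac{1}{j}$ with $j\in\{1,\dots,n\}$.

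Set $B_0:=B$ and $k_0:=k$. As long as $B_i\subset\left(\frac{1}{k_i+1},\frac{1}{k_i}\right)$ for some $k_i\in\{1,\dots,n\}$, I define $B_{i+1}:=f_{k_i}(B_i)\subset[0,1]$. Applying Lemma~\ref{lem: niezmienniczosc gestosci} with $A:=B_{i+1}$ (so that $g_{k_i}(A)=B_i$) yields
\[
\frac{m_n(B_{i+1})}{({\rm diam}\,B_{i+1})^{h_n}}=\frac{m_n(B_i)}{({\rm diam}\,B_i)^{h_n}}.
\]
Moreover, because $f_{k_i}$ maps $J_n\cap\left(\frac{1}{k_i+1},\frac{1}{k_i}\right)=g_{k_i}(J_n)$ bijectively onto $J_n$, the new set $B_{i+1}$ still meets $J_n$, so the recursion is meaningful.

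Termination is immediate: $|f_{k_i}'|=k_i(k_i+1)\ge 2$, so diameters at least double at each step, while $B_i\subset[0,1]$ forces ${\rm diam}\,B_i\le 1$. Let $N$ be the first index at which $B_N$ is not contained in any $\left(\frac{1}{k+1},\frac{1}{k}\right)$ with $k\le n$, and put $\hat B:=B_N$; by invariance, $\hat B$ has the same density ratio as $B$.

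The remaining, and most delicate, step is to confirm that $\hat B$ actually contains some $\frac{1}{j}$ with $j\in\{1,\dots,n\}$. Because every $f_{k_i}$ is affine, $\hat B$ is the affine image of $B$ and inherits $B$'s connectedness; in the application driving the main theorem $B$ is (a subinterval of) an interval centred on $J_n$, so $\hat B$ is itself an interval. Choosing any $y\in\hat B\cap J_n$ and the unique $k'\le n$ with $y\in\left(\frac{1}{k'+1},\frac{1}{k'}\right]$, the failure of $\hat B$ to lie inside $\left(\frac{1}{k'+1},\frac{1}{k'}\right)$ forces $\hat B$ to cover at least one of its endpoints. The genuinely delicate case, which I expect to be the main obstacle, is when that endpoint turns out to be $\frac{1}{n+1}$ rather than one of $1,\frac{1}{2},\dots,\frac{1}{n}$ (i.e.\ $k'=n$ and $\hat B$ extends past $\frac{1}{n+1}$ without reaching $\frac{1}{n}$); I plan to eliminate it either by a minor sharpening of the stopping rule, or by performing one additional controlled application of $f_n$ after trimming the $J_n$-free portion of $\hat B$ lying in $\left(0,\frac{1}{n+1}\right)$.
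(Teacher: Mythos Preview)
Your iterative scheme is exactly the paper's: write $B=g_k(A)$, invoke Lemma~\ref{lem: niezmienniczosc gestosci}, observe that diameters at least double, and repeat until the image is no longer trapped inside a single first–generation interval. In that sense the approaches coincide.

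Where you go further than the paper is in isolating the boundary case $k'=n$, i.e.\ the situation in which the iterated set $\hat B$ contains $\tfrac{1}{n+1}$ but not $\tfrac{1}{n}$. This case is genuine: for instance, an interval centred at the leftmost point $x_n$ of $J_n$ can, after finitely many steps, land as $[x_n-r,x_n+r]$ with $x_n-\tfrac{1}{n+1}<r<\tfrac{1}{n}-x_n$, which meets no $\tfrac{1}{j}$ with $j\le n$. The paper's proof simply does not address this, so your instinct is correct. However, your proposed remedy—``trim the $J_n$--free portion of $\hat B$ lying in $\bigl(0,\tfrac{1}{n+1}\bigr)$ and apply $f_n$ once more''—cannot deliver the lemma as stated: trimming removes a set of $m_n$--measure zero but strictly shortens the diameter, so it \emph{increases} the density rather than preserving it, and the equality in the conclusion is lost.

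The clean fix is not to patch the construction but to relax the conclusion to $j\in\{1,\dots,n+1\}$. That is all the downstream application (Theorem~\ref{thm: a do b}) actually needs, because the extra case $\hat B\ni\tfrac{1}{n+1}$ with $\hat B\subset\bigl(0,\tfrac{1}{n}\bigr)$ is precisely the configuration treated at the end of Proposition~\ref{prop: a do b z punktem w srodku} (the case $k=n+1$). So your plan is sound for the main theorem; just do not expect it to yield the lemma with the exact equality and the range $j\le n$ simultaneously.
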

\begin{proof}
If $B\subset \left[\frac{1}{k+1}, \frac{1}{k}\right]$ for some $k \leq n$, then $B = g_k(A)$ for some $A \subset[0,1]$. We can apply Lemma \ref{lem: niezmienniczosc gestosci} with $g_k(A) = B$, giving us
\[
\frac{m_n(B)}{({\rm diam}B)^{h_n}} = \frac{m_n(g_k(A))}{({\rm diam}(g_k(A)))^{h_n}} = \frac{m_n(A)}{({\rm diam}A)^{h_n}} 
\]
Now notice that $ 2 \cdot {\rm diam}B \leq {\rm diam}A$, and either there exists $j \in \{1,2, \dots, n \}$ such that $\frac{1}{j} \in A$ and then set $\hat{B} = A$ or we can apply this procedure again to the set A expanding it, until it intersects the set $ \{\frac{1}{j}\}_{j = 1}^{n}$, ending the proof. 
\end{proof}
Theorem 8.6.2 in \cite{przyturb}, called by the authors Frostman's type lemma states the following. Let $\mu$ be a Borel probability measure on $\mathbb{R}^n$ and A be a bounded subset of $\mathbb{R}^n$ then if there exists $C \in (0,\infty]$ such that 
\begin{enumerate}
    \item[(a)] for all $x \in A$ 
    \[
    \liminf\limits_{r \to 0} \frac{\mu(B(x,r))}{r^s} \leq C
    \]
    and
    \item[(b)]
    for all $x \in A$ 
    \[
    \liminf\limits_{r \to 0} \frac{\mu(B(x,r))}{r^s} \geq C^{-1} < \infty
    \]
\end{enumerate}
then $0 < \mathcal{P}_{s}(E) < \infty$ for each Borel set $E \subset A$.
\\
We know that $\mu = m_n$ fulfills this assumption. In fact, the following, well known Proposition holds.
\begin{theorem}\label{prop: ograniczenie unormowanej miary pakujacej z obu stron}
Let $m_n$ be the normalized packing measure on the set $J_n$. Then there exists constants $C_n > 0$ such that
\[
C_n^{-1} \cdot r^{h_n} \leq m_n(B(x,r)) \leq C_n \cdot r^{h_n}
\]
for each $x \in J_n$ and $r > 0$, and thus we know that $0 < \mathcal{P}_{h_n}(J_n) < \infty$.
\end{theorem}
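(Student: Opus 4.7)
The plan is to combine the conformality of $m_n$ (Lemma \ref{lem: niezmienniczosc gestosci}) with a standard Moran cover argument to obtain both density bounds, and then to invoke the Frostman-type lemma quoted just before the statement in order to conclude that $0 < \mathcal{P}_{h_n}(J_n) < \infty$. Since the argument will use Lemma \ref{lem: niezmienniczosc gestosci}, it is convenient to define $m_n$ provisionally as the normalized $h_n$-dimensional Hausdorff measure on $J_n$: this is well-known to be finite and positive for a finite linear self-similar IFS satisfying OSC, and by the identification recorded after Definition \ref{defi: miara unormowana} it coincides with the normalized packing measure once the latter is shown to be finite and positive.

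For a finite word $\omega = (q_1,\ldots,q_l) \in \{1,\ldots,n\}^l$, write $I_\omega := g_{q_1}\circ\cdots\circ g_{q_l}([0,1])$ and $\rho_k := \frac{1}{k}-\frac{1}{k+1}=\frac{1}{k(k+1)}$, so that $|I_\omega|=\rho_{q_1}\cdots\rho_{q_l}$. Iterating Lemma \ref{lem: niezmienniczosc gestosci} yields
\[
\frac{m_n(I_\omega)}{|I_\omega|^{h_n}} \;=\; \frac{m_n([0,1])}{|[0,1]|^{h_n}} \;=\; 1,
\]
so $m_n(I_\omega)=|I_\omega|^{h_n}$. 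The dimension equation $\sum_k \rho_k^{h_n}=1$ then confirms that the cylinders of every fixed generation partition $J_n$ up to an $m_n$-null set.

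Now fix $x \in J_n$ and $0<r\leq 1$, and let $\mathcal{M}(r)$ denote the Moran family of stopping cylinders, i.e.\ all $I_\sigma$ with $|I_\sigma|\leq r < |I_{\sigma^-}|$, where $\sigma^-$ is $\sigma$ with its last letter removed. Since $\rho_k \geq \rho_n =: \rho_{\min} = \frac{1}{n(n+1)}$, every $I_\sigma \in \mathcal{M}(r)$ satisfies $|I_\sigma| \geq \rho_{\min}\, r$; by a short tree argument (using the OSC at each finite level) distinct elements of $\mathcal{M}(r)$ have disjoint interiors and their union covers $J_n$. The unique element of $\mathcal{M}(r)$ containing $x$ is contained in $B(x,r)$, giving the lower bound $m_n(B(x,r)) \geq (\rho_{\min}\, r)^{h_n}$. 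For the upper bound, since the cylinders in $\mathcal{M}(r)$ are pairwise interiorly disjoint subintervals of $[0,1]$ of length at least $\rho_{\min}\, r$, at most $2\rho_{\min}^{-1}+2$ of them can meet the interval $B(x,r)$, and each has $m_n$-measure at most $r^{h_n}$, so $m_n(B(x,r)) \leq (2\rho_{\min}^{-1}+2)\, r^{h_n}$.

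Both inequalities hold with a constant $C_n$ depending only on $n$ (essentially $C_n = \max\{\rho_{\min}^{-h_n},\, 2\rho_{\min}^{-1}+2\}$). Applying the Frostman-type lemma quoted above to $\mu=m_n$ then delivers $0 < \mathcal{P}_{h_n}(J_n) < \infty$, at which point the distinction between normalized Hausdorff and normalized packing measure becomes vacuous. The only mildly delicate point I anticipate is the verification that $\mathcal{M}(r)$ really is a partition-type cover of $J_n$ with pairwise disjoint cylinder interiors across different generations; this is the classical Moran cover argument, and it goes through cleanly here because the OSC at generation one propagates to each generation by the action of the $g_k$.
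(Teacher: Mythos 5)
Your proof is correct and follows essentially the same route as the paper's: the identity $m_n(I_\omega)=|I_\omega|^{h_n}$ on cylinders combined with a Moran stopping-time cover yields both density bounds, and the Frostman-type lemma then gives $0<\mathcal{P}_{h_n}(J_n)<\infty$. Your provisional definition of $m_n$ as the normalized Hausdorff measure, upgraded to the packing measure only after finiteness is established, is in fact slightly more careful than the paper, which works with $m_n$ before $0<\mathcal{P}_{h_n}(J_n)<\infty$ has been proved.
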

\begin{proof}
This is a well known fact. We provide a proof for completeness. One can refer to the proof in \cite{Falconer} (Theorem 9.3). Let $J_n$ be the limit set of the IFS $S_n$. Let $\rho_j = |g_j'|$. The measure $m_n$ is distributed on the cylinders according to weights $\rho_j^{h_n}$ fulfilling
\[
\sum \limits_{j = 1}^n \rho_j^{h_n} = 1
\]
For $x \in J_n$ let $c_k(x)$ be the only interval corresponding to the cylinder of length $k$ in the symbolic space such that it contains $x$:  $c_k(x) = g_{i_k}\circ \dots \circ g_{i_1}([0,1]) \ni x$. Consider $B(x,r)$. Let $n_0 = min\{n: c_n(x) \subset B(x,r) \}$. Then $m_n(c_{n_0}(x)) \leq m_n(B(x,r))$, and $c_{n_0-1}(x) \not \subset B(x,r)$. Thus $|c_{n_0-1}(x)| \geq r$, and hence $|\rho_{i_1,\dots, i_{n_0-1}}| = |\rho_{i_1} \dots \rho_{i_{n_0-1}}| \geq r$. This implies that $|\rho_{i_1} \dots \rho_{i_{n_0}}| \geq r \cdot c$, where $c = \min\limits_{i \in \{1 \dots n\}}(\rho_i)$.
From this we get
\[
m_n(B(x,r)) \geq m_n(c_{n_0}(x)) = \rho_{i_1}^{h_n} \dots \rho_{i_{n_0}}^{h_n} \geq c^{h_n} \cdot r^{h_n} = \tilde{c} \cdot r^{h_n}
\]
which concludes the first part of the proof. For the second inequality, start by fixing $r>0$. For each sequence $\hat{i} = i_1, i_2, i_3 \dots $ we choose the least $k = k(\hat{i})$ such that 
\begin{equation}\label{eq: cylindry w druga strone}
\rho_{i_1} \cdot \dots \cdot \rho_{i_k} \leq r
\end{equation}
Then $\rho_{i_1} \cdot \dots \cdot \rho_{i_{k-1}} > r$, and thus 
\begin{equation}\label{eq: wielosc cylindrow}
\rho_{i_1} \cdot \dots \cdot \rho_{i_{k}} \geq r \cdot \min \limits_{1 \leq j \leq n} \rho_j = r \cdot c 
\end{equation}
Hence for any such $k = k(\hat{i})$ cylinder $c_{k(\hat{i})}(x)$ contains some ball of radius $r \cdot c$ and is contained in some ball of radius $r$. Moreover, each of the cylinders $c_{k(\hat{i})}$ are pairwise disjoint. Now take any $x \in J_n$ and $B(x,r)$. We cover the set $B(x,r)\cap J_n$ by cylinders $c = c_{n(\hat{i}}$. Those cylinders are from different generations, but due to inequality \eqref{eq: wielosc cylindrow}, they have almost the same size $r$. We denote this collection of cylinders by $\mathbf{C}$. Thus,
\[
m_n(B(x,\frac{r}{2})) \leq \sum \limits_{c \in \mathbf{C}: c \cap B(x,\frac{r}{2}) \neq \emptyset} m_n(c) \leq N \max m_n(c) \leq N \cdot C \cdot r^{h_n}
\]
In the first inequality we use the fact, that the ball $B(x,\frac{r}{2})$ can be intersected by at most $N = \frac{2}{c}+2$ disjoint cylinders from $\mathbf{C}$. For each of the cylinders from $\mathbf{C}$ we have $m_n(c) = \rho_{i_1}^{h_n} \dots \rho_{i_n}^{h_n} \leq r^{h_n}$ based on inequality \eqref{eq: cylindry w druga strone}, which ends the proof.
\end{proof} 

\begin{defi}\label{generacje fnl}
    Let $S_n$ be IFS generated by $f_k$, $k = 1, \dots, n$. We denote by $\mathcal{F}^{n}_l$ the $l$-th generation of intervals generated by $S_n$
\[
  \mathcal{F}^n_l = \left \{ g_{i_1}\circ g_{i_2}\circ \dots \circ g_{i_l}([0,1]): i_1, i_2, \dots, \in \{1,2,\dots, n\}  \right \}
\]

\end{defi}

\begin{defi}\label{def: dn}
  Let us denote $d_n(J) = \frac{m_n(J)}{(diam(J))^{h_n}}$. We call this value the $n$-th density of the interval J.  
\end{defi}

\begin{lemma}\label{lem: rozdzial gestosci na minimum}
Let $ [0,1] \supseteq I = I_1 \cup I_2$ where $I_1$ and $I_2$ are closed adjacent intervals. Then
\[
d_n\left(I \right) \geq \min \left \{ d_n(I_1), d_n(I_2) \right \}
\]
\end{lemma}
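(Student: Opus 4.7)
My plan is to reduce the inequality to the standard subadditivity of $x\mapsto x^{h_n}$ for $h_n\in(0,1)$. I would set up the following abbreviations: write $d_i={\rm diam}(I_i)$ and $a_i=m_n(I_i)$ for $i=1,2$, and $h=h_n$. Since $I_1,I_2$ are closed adjacent intervals sharing exactly one endpoint $p$, their union has diameter $d_1+d_2$, and because $m_n$ is the normalized packing measure on $J_n$ and single points have packing measure zero, I get the additivity $m_n(I)=a_1+a_2$. Hence
\[
d_n(I)=\frac{a_1+a_2}{(d_1+d_2)^{h}}.
\]

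Without loss of generality assume $d_n(I_1)\le d_n(I_2)$, so $a_2\ge a_1 d_2^{h}/d_1^{h}$. Then the target inequality $d_n(I)\ge d_n(I_1)$ is equivalent to
\[
(a_1+a_2)d_1^{h}\ge a_1(d_1+d_2)^{h},
\]
and using the lower bound on $a_2$ it suffices to verify
\[
a_1 d_1^{h}+a_1 d_2^{h}\ge a_1(d_1+d_2)^{h},
\]
i.e.\ $d_1^{h}+d_2^{h}\ge (d_1+d_2)^{h}$. This is precisely the subadditivity of $x\mapsto x^{h}$, which holds for all $h\in(0,1]$; and since we already know $0<h_n<1$ for every $n$ (as recalled just after Definition \ref{def: open set condition}), this inequality is available.

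I do not anticipate any real obstacle here: the argument is purely two-line algebra plus concavity of $x^{h_n}$. The only point that deserves an explicit remark in the write-up is the justification $m_n(\{p\})=0$, which follows from Theorem \ref{prop: ograniczenie unormowanej miary pakujacej z obu stron} (letting $r\to 0$ gives $m_n(\{p\})=0$ whether or not $p\in J_n$). Once that is noted, the rest is the computation above.
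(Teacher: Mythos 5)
Your argument is correct and follows essentially the same route as the paper: both rely on $m_n$ being atomless (so the measure adds across the shared endpoint) together with the subadditivity $d_1^{h_n}+d_2^{h_n}\geq (d_1+d_2)^{h_n}$, the paper then invoking the mediant inequality where you unfold it explicitly via the WLOG reduction. No gap.
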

\begin{proof}
Note, that the measure $m_n$ is atomless. This is a direct consequence of the Proposition \ref{prop: ograniczenie unormowanej miary pakujacej z obu stron}. Using this fact together with $h_n<1$, we obtain
\[
d_n(J) = \frac{m_n(J)}{(diam(J))^{h_n}} = \frac{m_n(I_1 \cup I_2 )}{(diam(I_1 \cup I_2 ))^{h_n}} \geq \frac{m_n(I_1) + m_n(I_2)}{diam(I_1)^{h_n} + diam(I_2)^{h_n}} \geq 
\]
\[
\geq \min \left \{ \frac{m_n(I_1)}{diam(I_1)^{h_n}},\frac{m_n(I_2)}{diam(I_2)^{h_n}}\right \} = \min \left \{ d_n(I_1), d_n(I_2) \right \}
\]
\end{proof}
\begin{lemma}\label{lem: wypuklosc z hn}
Let $c$ be a real number with $0<c<1$, and $a_j \in (0,1)$ for $j \in 1, 2, \dots, k$. Then
\[
\sum \limits_{j = 1}^k a_j^c \geq \left [ \sum \limits_{j = 1}^k a_j\right]^c  
\]
\end{lemma}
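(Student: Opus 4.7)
The plan is to reduce the inequality to the elementary fact that for $x \in [0,1]$ and $c \in (0,1)$, one has $x^c \geq x$. This lets us avoid invoking any sophisticated machinery about concave functions.

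First, I would set $S := \sum_{j=1}^k a_j$. Since each $a_j > 0$, we have $S \geq a_j$ for every $j$, hence $a_j/S \in (0,1]$. The case $k=1$ gives equality and can be dismissed immediately, so assume $k \geq 2$.

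Next I would exploit the single-variable inequality: for $x \in (0,1]$ and $c \in (0,1)$,
\[
x^c = x \cdot x^{c-1} \geq x,
\]
because $x \leq 1$ and $c-1 < 0$ force $x^{c-1} \geq 1$. Applying this to $x = a_j/S$ for each $j$ and summing yields
\[
\sum_{j=1}^k \left(\frac{a_j}{S}\right)^c \;\geq\; \sum_{j=1}^k \frac{a_j}{S} \;=\; 1.
\]
Multiplying through by $S^c$ gives exactly
\[
\sum_{j=1}^k a_j^c \;\geq\; S^c \;=\; \left[\sum_{j=1}^k a_j\right]^c,
\]
which is the desired inequality.

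There is really no substantive obstacle here: the statement is essentially a one-line consequence of the monotonicity $x \mapsto x^c/x$ on $(0,1]$ for $c < 1$, or equivalently concavity/subadditivity of $x^c$ on the positive reals. The only point requiring a moment of care is confirming that $a_j/S \leq 1$, which follows from positivity of the $a_j$'s rather than from the stated bound $a_j < 1$; the hypothesis $a_j \in (0,1)$ is therefore slightly stronger than needed but harmless.
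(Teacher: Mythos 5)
Your proof is correct and is essentially identical to the paper's: both normalize by $S=\sum_j a_j$ and use the pointwise inequality $x^c\geq x$ for $x\in(0,1]$, $c<1$, before multiplying back by $S^c$. You merely spell out the justification of that elementary inequality in more detail than the paper does.
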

\begin{proof}
\[
 1 = \sum \limits_{j = 1}^k \frac{a_j}{\sum \limits_{j = 1}^k a_j}  \leq \sum \limits_{j = 1}^k \Bigg(\frac{a_j}{\sum \limits_{j = 1}^k a_j} \Bigg)^c  = \sum \limits_{j = 1}^k \frac{a_j^c}{\left [ \sum \limits_{j = 1}^k a_j\right]^c}
\]
This inequality follows from the fact that $c<1$. Multiplying both sides by 
\[
\left [ \sum \limits_{j = 1}^k a_j\right]^c
\]
ends the proof.
\end{proof}
\section{Density theorems for the packing measure}
For a Borel measure in $\mathbb{R}^n$ let the lower density be defined as follows
\[
\Theta_*^{\alpha}(\mu,x) := \liminf\limits_{r \to 0} \frac{\mu(B(x,r))}{(2r)^\alpha}
\]
In particular, if $n = 1$, then
\[
\Theta_*^{\alpha}(\mu,x) := \liminf\limits_{r \to 0} \frac{\mu([x-r,x+r])}{(2r)^\alpha}
\]
P. Matilla proves in \cite{Matilla} (see 6.10) the following theorem.
\begin{theorem}\label{thm: gestosc miary pakujacej}
Suppose $A \subset \mathbb{R}^n$ with $\mathcal{P}_{\alpha}(A) < \infty$. Then
\[
\Theta_*^{\alpha}(\mathcal{P}_{\alpha}|_{A},x) = 1
\]
for $\mathcal{P}_{\alpha}$ almost all $x \in A$.
\end{theorem}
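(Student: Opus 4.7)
The plan is to prove the two one-sided inequalities $\Theta_*^\alpha(\mu,\cdot) \geq 1$ and $\Theta_*^\alpha(\mu,\cdot) \leq 1$ separately at $\mathcal{P}_\alpha$-a.e.\ point of $A$, where I write $\mu := \mathcal{P}_\alpha|_A$; because $\mathcal{P}_\alpha(A) < \infty$, $\mu$ is a finite Radon measure on $\mathbb{R}^n$. The overall logic is a comparison with the defining measure that collapses, at the end, to the tautology $\mu = \mathcal{P}_\alpha$ on $A$, unless the exceptional set has measure zero.

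\textbf{Lower bound} $\Theta_*^\alpha \geq 1$. Fix rational $t < 1$ and let $E_t := \{x \in A : \Theta_*^\alpha(\mu,x) < t\}$. For each $x \in E_t$ and each $\delta > 0$, pick $r(x,\delta) < \delta$ with $\mu(B(x,r(x,\delta))) < t\,(2r(x,\delta))^\alpha$. The collection $\{B(x,r(x,\delta)) : x \in E_t,\ \delta > 0\}$ is a Vitali fine cover of $E_t$, so the Vitali--Besicovitch covering theorem yields a pairwise disjoint countable subfamily $\{B(x_i,r_i)\}$ with $r_i < \delta$ and $\mu(E_t \setminus \bigcup_i B(x_i,r_i)) = 0$. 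Since the $B(x_i,r_i)$ are disjoint with centers in $E_t$, they form a $\delta$-packing of $E_t$, so
\[
\mu(E_t) \leq \sum_i \mu(B(x_i,r_i)) < t \sum_i (2r_i)^\alpha \leq t \cdot P_{2\delta}^\alpha(E_t).
\]
Sending $\delta \to 0$ gives $\mu(E_t) \leq t \cdot P^{premeasure}_\alpha(E_t)$. The same argument applied to any Borel $E' \subseteq E_t$ gives $\mu(E') \leq t \cdot P^{premeasure}_\alpha(E')$, and taking the infimum over countable Borel partitions in Definition \ref{defi: packing measure} produces $\mu(E_t) \leq t \cdot \mathcal{P}_\alpha(E_t) = t \cdot \mu(E_t)$, which forces $\mu(E_t) = 0$. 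Letting $t \uparrow 1$ through a countable sequence closes this direction.

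\textbf{Upper bound} $\Theta_*^\alpha \leq 1$. Fix $t > 1$ and set $E^t := \{x \in A : \Theta_*^\alpha(\mu,x) > t\}$. Here $\liminf > t$ means that for each $x \in E^t$ there exists $\rho(x) > 0$ with $\mu(B(x,r)) > t\,(2r)^\alpha$ for \emph{every} $r < \rho(x)$, which is what makes this direction technically easier. Stratify $E^t = \bigcup_k E^t_k$ by $E^t_k := \{x \in E^t : \rho(x) > 1/k\}$. For any $\delta$-packing $\{B(x_i,r_i)\}$ of $E^t_k$ with $\delta < 1/k$, disjointness yields
\[
\sum_i (2r_i)^\alpha < \frac{1}{t}\sum_i \mu(B(x_i,r_i)) \leq \frac{1}{t}\,\mu\bigl(N_\delta(E^t_k)\bigr),
\]
where $N_\delta$ denotes the $\delta$-neighborhood. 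Taking the supremum over such packings and then $\delta \to 0$, outer regularity of the finite Radon measure $\mu$ gives $P^{premeasure}_\alpha(E^t_k) \leq \tfrac{1}{t}\,\mu(\overline{E^t_k}) \leq \tfrac{1}{t}\,\mu(E^t)$ (one may reduce to a Borel $E^t$ of full measure so this step is lossless). Since $\mathcal{P}_\alpha \leq P^{premeasure}_\alpha$ and $E^t_k \uparrow E^t$, continuity of $\mathcal{P}_\alpha$ gives $\mathcal{P}_\alpha(E^t) \leq \tfrac{1}{t}\mu(E^t)$, i.e.\ $\mu(E^t) \leq \tfrac{1}{t}\mu(E^t)$, forcing $\mu(E^t) = 0$ as $t > 1$.

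The main obstacle is the interplay between the non-countably-subadditive premeasure $P^{premeasure}_\alpha$ and the honest Borel measure $\mathcal{P}_\alpha$: the packing estimates naturally live at the premeasure level, but the conclusion must be phrased at the measure level, which is only recovered via the infimum-over-covers definition of $\mathcal{P}_\alpha$. The second subtle point is the correct invocation of the Vitali--Besicovitch covering theorem in the lower-bound direction, which exploits the Radon character of $\mu$ and the fineness of the cover furnished by the $\liminf$, and is what legitimately converts the pointwise density control into a global packing estimate.
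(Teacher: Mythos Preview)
The paper does not give its own proof of this theorem; it simply quotes it from Mattila's book (reference~6.10), so there is no ``paper's proof'' to compare against. Your argument is essentially the standard density-comparison proof one finds there, and the lower-bound half is fine.

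There is, however, a genuine gap in your upper-bound half. From the packing estimate you correctly obtain
\[
P^{premeasure}_\alpha(E^t_k) \;\leq\; \tfrac{1}{t}\,\mu\bigl(\overline{E^t_k}\bigr),
\]
but the next step, $\mu(\overline{E^t_k}) \leq \mu(E^t)$, is not justified and is in general false: the closure of $E^t_k$ may pick up points of $A$ where the lower density is $\leq t$, and these can carry positive $\mu$-mass. Passing to a Borel version of $E^t$ does not help, since the issue is topological (closure), not measurability. The standard repair is to invoke regularity \emph{before} forming the packing estimate: either (i) fix $\varepsilon>0$, choose by outer regularity an open $V\supseteq E^t$ with $\mu(V)<\mu(E^t)+\varepsilon$, and incorporate the requirement $B(x,r)\subseteq V$ into the stratification, so that every packing under consideration sits inside $V$ and one gets $P^{premeasure}_\alpha(E^t_k)\leq \tfrac{1}{t}\mu(V)$; or (ii) use inner regularity to replace $E^t_k$ by a compact $K\subseteq E^t_k$, for which $\mu(N_\delta(K))\to\mu(K)$ as $\delta\to 0$ since $K$ is closed. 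Either route then yields $\mathcal P_\alpha(E^t)\leq \tfrac{1}{t}\mu(E^t)$ and the contradiction follows as you wrote.
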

Based on this theorem, Feng in \cite{Feng} observed the following.
\begin{theorem}\label{thm: lower density feng}
Let $\mu$ be the normalized packing measure on the limit set $K \subset \mathbb{R}$ of the IFS consisting of finitely many linear, orientation preserving contractions, satisfying Open Set Condition. Then for $\mu$-almost all $x \in \mathbb{R}$, $\Theta_*^{\alpha}(\mu,x) = d_{min}$, where $d_{min}$ is defined as follows
\[
d_{min} = \inf\{d(J): J \text{ a closed interval centered in K with } J\subset [0,1] \}
\]
where $d(J) = \frac{\mu(J)}{|J|^{\alpha}}$ and $\alpha$ denotes packing dimension of K.
\end{theorem}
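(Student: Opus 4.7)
The plan is to prove the equality $\Theta_*^{\alpha}(\mu,x) = d_{min}$ at $\mu$-a.e.\ $x$ by establishing the two inequalities separately; the lower bound is immediate and the upper bound is where all the work sits. For the lower bound, fix any $x \in K$ (which is where $\mu$ lives) and any $r$ small enough that $B(x,r) \subset [0,1]$. Then $B(x,r)$ is a closed interval centered at a point of $K$, so by definition of $d_{min}$,
\[
\frac{\mu(B(x,r))}{(2r)^{\alpha}} \geq d_{min}.
\]
Passing to $\liminf_{r\to 0}$ gives $\Theta_*^{\alpha}(\mu,x) \geq d_{min}$ for every $x \in K$.

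For the reverse inequality, given $\varepsilon > 0$, choose by definition of infimum an interval $J_0 = [y_0-\delta, y_0+\delta] \subset (0,1)$ centered at some $y_0 \in K$ with $d(J_0) \leq d_{min} + \varepsilon/2$. The engine is the conformal scale-invariance of density (Lemma \ref{lem: niezmienniczosc gestosci}): for any composition $\phi_n = g_{i_1}\circ\dots\circ g_{i_n}$ with contraction ratio $\rho_n$, the affine map $\phi_n$ sends each ball $B(y,s) \subset [0,1]$ to the ball $B(\phi_n(y),\rho_n s)$ (since affine maps preserve midpoints, regardless of orientation) and satisfies $\mu(\phi_n(A)) = \rho_n^\alpha \mu(A)$ for every Borel $A \subset [0,1]$. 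Pick $\varepsilon' > 0$ and enlarge $J_0$ to $J_0' = [y_0-\delta-\varepsilon',\,y_0+\delta+\varepsilon']$. Then, whenever $|\phi_n^{-1}(x) - y_0| < \varepsilon'$, a triangle-inequality check gives $B(x,\rho_n\delta) \subset \phi_n(J_0')$, hence
\[
\frac{\mu(B(x,\rho_n\delta))}{(2\rho_n\delta)^\alpha} \;\leq\; \frac{\rho_n^\alpha \mu(J_0')}{(2\rho_n\delta)^\alpha} \;=\; \frac{\mu(J_0')}{(2\delta)^\alpha}.
\]
Because $\mu$ is atomless (Proposition \ref{prop: ograniczenie unormowanej miary pakujacej z obu stron}), the right-hand side tends to $d(J_0)$ as $\varepsilon'\to 0$, and so can be made $\leq d_{min} + \varepsilon$ by a choice of $\varepsilon'$ depending only on $\varepsilon$, not on $x$.

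It remains to check that for $\mu$-a.e.\ $x$ there are infinitely many $n$ with $|\phi_n^{-1}(x)-y_0|<\varepsilon'$, which in turn forces $\rho_n \to 0$. Writing $F(x) = f_{i_1}(x)$ on the $i_1$-th cylinder, the iterate $F^n$ is exactly $\phi_n^{-1}$, so the condition asks that $F^n(x) \in B(y_0,\varepsilon')$ infinitely often. Since $\mu$ is the unique $\alpha$-conformal measure on $K$ under the open set condition, it is (the projection of) the Bernoulli product measure on symbolic space with weights $\rho_k^{\alpha}$; in particular $F$ is ergodic with respect to $\mu$. Proposition \ref{prop: ograniczenie unormowanej miary pakujacej z obu stron} yields $\mu(B(y_0,\varepsilon')) > 0$, so Birkhoff's ergodic theorem delivers the required recurrence. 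Intersecting the full-measure sets obtained for a countable sequence $\varepsilon_k \to 0$, we conclude $\Theta_*^{\alpha}(\mu,x) \leq d_{min}$ for $\mu$-a.e.\ $x$, which combines with the lower bound to finish the proof.

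The main obstacle I anticipate is the clean identification of the normalized packing measure $\mu$ with the Bernoulli product measure on symbolic space, needed to invoke ergodicity of $F$; this relies on uniqueness of the $\alpha$-conformal measure under OSC, which is standard but should be cited carefully. A minor technicality is ensuring that the chosen $J_0$ lies strictly inside $(0,1)$ so that the enlargement $J_0'$ still fits in $[0,1]$, but this can be arranged by shrinking $\delta$ whenever $y_0$ is close to $\{0,1\}$.
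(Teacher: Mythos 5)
The paper offers no proof of this theorem at all: it is quoted from Feng \cite{Feng} as a known result (and the surrounding text only remarks that it adapts to orientation--reversing maps), so there is no in-paper argument to compare yours against. Your blind proof is, as far as I can check, correct, and it follows what is essentially the standard route, which is also the spirit of Feng's original argument: the inequality $\Theta_*^{\alpha}(\mu,x)\geq d_{min}$ is immediate from the definition of the infimum (with the harmless caveat that one needs $B(x,r)\subset[0,1]$, which holds for small $r$ once $x\in(0,1)$, and $\{0,1\}$ is $\mu$-null since $\mu$ is atomless), while the reverse inequality is obtained by choosing a near-optimal interval $J_0$ centered at some $y_0\in K$, transporting a slight enlargement $J_0'$ of it into the cylinder of $x$ by the affine conformality of Lemma \ref{lem: niezmienniczosc gestosci}, and then showing that $\mu$-a.e.\ orbit under the expanding coding map returns to $B(y_0,\varepsilon')$ infinitely often. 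The two points that genuinely need care are exactly the ones you flag: the identification of $\mu$ with the projected Bernoulli measure with weights $\rho_k^{\alpha}$ (which holds because the set of points with non-unique coding is countable, hence $\mu$-null, so the coding is an isomorphism mod $0$ and ergodicity of the shift transfers), and the reduction to $J_0\subset(0,1)$, which is legitimate because shrinking a closed interval $J\subset[0,1]$ concentrically by a factor $t\uparrow 1$ changes its density by an arbitrarily small amount (continuity of $\mu$ from below), so the infimum over intervals contained in the open interval coincides with $d_{min}$. With these two points spelled out, the argument is complete; positivity of $\mu(B(y_0,\varepsilon'))$ follows from Theorem \ref{prop: ograniczenie unormowanej miary pakujacej z obu stron}, and $\rho_n\to 0$ guarantees the radii $\rho_n\delta$ along which you test the density actually tend to $0$. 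An alternative to Birkhoff here, closer to Feng's own write-up, is to fix a finite word $w$ whose cylinder lies inside $B(y_0,\varepsilon')$ and use Borel--Cantelli on independent blocks to see that a.e.\ coding contains $w$ infinitely often; this is equivalent but avoids invoking ergodicity of the factor map explicitly.
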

This theorem is easily adapted to the case, where iterated function system consists of finitely many linear, changing orientation contractions satisfying Open Set condition. An immediate consequence of this fact, we get the following.
Denote by $\mathcal{P}_{\alpha}|_K$ the restriction of the $\alpha$–dimensional packing measure on $K$, that is, $\mathcal{P}_{\alpha}|_K = \mathcal{P}_{h}(A \cap K)  $ for any Borel set $A \subset \mathbb{R}$. Since $\mu = c \cdot \mathcal{P}_{\alpha}|_K$ with $c = 1/\mathcal{P}_{\alpha}(K)$, we have
\begin{equation}\label{eq: lower density obs}
\Theta_*^{\alpha}(\mu,x) = \frac{1}{\mathcal{P}_{\alpha}(K)}\Theta_*^{\alpha}(\mathcal{P}_{\alpha}|_K,x)
\end{equation}
for all $x \in \mathbb{R}$.
With those tools in hand, Feng noticed the following, essential to his paper, theorem.
\begin{theorem}\label{thm: miara pakujaca jako odwrotnosc dmin}
\[
\mathcal{P}_{\alpha}(K) = d_{min}^{-1}
\]
\end{theorem}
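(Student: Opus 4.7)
The plan is to exploit the three ingredients already assembled in the section: (i) Theorem~\ref{thm: gestosc miary pakujacej}, which tells us that the lower $\alpha$-density of $\mathcal{P}_\alpha|_K$ equals $1$ at $\mathcal{P}_\alpha$-almost every point of $K$; (ii) the adapted version of Theorem~\ref{thm: lower density feng}, which identifies the lower $\alpha$-density of the normalized measure $\mu$ with the constant $d_{min}$ at $\mu$-almost every point of $K$; and (iii) the trivial rescaling identity~\eqref{eq: lower density obs}, which relates these two densities by the positive constant $1/\mathcal{P}_\alpha(K)$. Since Proposition~\ref{prop: ograniczenie unormowanej miary pakujacej z obu stron} guarantees $0<\mathcal{P}_\alpha(K)<\infty$, the measures $\mu$ and $\mathcal{P}_\alpha|_K$ are mutually absolutely continuous with constant Radon–Nikodym derivative, so the exceptional sets in (i) and (ii) are both null for the same measure class and their complements intersect.

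I would then pick any point $x$ in the intersection of these two full-measure sets. At this $x$ the statements (i), (ii) and (iii) give
\[
d_{min}=\Theta_*^{\alpha}(\mu,x)=\frac{1}{\mathcal{P}_\alpha(K)}\,\Theta_*^{\alpha}(\mathcal{P}_\alpha|_K,x)=\frac{1}{\mathcal{P}_\alpha(K)},
\]
and rearranging yields the claimed identity $\mathcal{P}_\alpha(K)=d_{min}^{-1}$.

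There is no genuine obstacle once the earlier density theorems are invoked; the only points worth pausing on are that $d_{min}$ is both finite (take $J=[0,1]$) and strictly positive (apply Proposition~\ref{prop: ograniczenie unormowanej miary pakujacej z obu stron} to any ball of radius $|J|/2$ centered at a point of $K\cap J$), so that the reciprocal $d_{min}^{-1}$ makes sense and agrees with the finite positive number $\mathcal{P}_\alpha(K)$. The entire argument is therefore a short pointwise comparison of two lower densities via the linear scaling relation between $\mu$ and $\mathcal{P}_\alpha|_K$.
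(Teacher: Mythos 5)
Your argument is essentially identical to the paper's: both combine the rescaling identity \eqref{eq: lower density obs} with Mattila's density theorem (Theorem \ref{thm: gestosc miary pakujacej}) and Feng's identification $\Theta_*^{\alpha}(\mu,x)=d_{min}$ (Theorem \ref{thm: lower density feng}) at a common point of full measure. Your added remarks on the intersection of the two full-measure sets and on $0<d_{min}<\infty$ are correct and slightly more careful than the paper's one-line version, but the route is the same.
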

\begin{proof}
From observation \eqref{eq: lower density obs}, we get $\Theta_*^{\alpha}(\mathcal{P}_{\alpha}|_K,x) = \Theta_*^{\alpha}(\mu,x)\mathcal{P}_{\alpha}(K)$ for any $x \in \mathbb{R}$. However, using the lower density Theorem \ref{thm: gestosc miary pakujacej}, we have that $\Theta_*^{\alpha}(\mathcal{P}_{\alpha}|_K,x) = 1$ for $\mathcal{P}_{\alpha}|_K$ almost all $x \in \mathbb{R}$ which implies the result.
\end{proof}
Adapting this theorem to our case, we get the following result.
\begin{theorem}\label{thm: tw o gestosci miary pakujacej}
\[
\mathcal{P}_{h_n}(J_n) = \sup \limits_{\substack{F \ centered \ at \ J_n \\ F \subseteq [0,1]}} \frac{(diam(F))^{h_n}}{m_n(F)} = \left(\inf \limits_{\substack{F \ centered \ at \ J_n \\ F \subseteq [0,1]}} d_n(F) \right)^{-1}
\] 
where $\mathcal{P}_{h_n}$ denotes packing measure with dimension $h_n$, $diam(F)$ is the diameter of the interval F, $m_n$ is the normalized packing measure and $d_n(F)$ denotes $n$-th density of the interval $F$.
\end{theorem}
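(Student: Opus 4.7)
The plan is to view this theorem as an immediate packaging of Feng's Theorem~\ref{thm: miara pakujaca jako odwrotnosc dmin} in our setting, repeated verbatim with $K = J_n$ and $\alpha = h_n$. The second equality in the statement is a trivial algebraic rewriting: since $d_n(F) = m_n(F)/(\mathrm{diam}(F))^{h_n}$, one has
\[
\sup_{F}\frac{(\mathrm{diam}(F))^{h_n}}{m_n(F)} = \sup_F \frac{1}{d_n(F)} = \Bigl(\inf_F d_n(F)\Bigr)^{-1},
\]
so all the work consists in proving $\mathcal{P}_{h_n}(J_n) = d_{\min}^{-1}$, where $d_{\min} := \inf_F d_n(F)$ ranges over closed subintervals $F \subseteq [0,1]$ centered at points of $J_n$.

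First I would apply the orientation-reversing analogue of Theorem~\ref{thm: lower density feng} to the IFS $S_n$, obtaining that for $m_n$-almost every $x \in J_n$
\[
\Theta_*^{h_n}(m_n,x) = d_{\min}.
\]
Next, using that $\mathcal{P}_{h_n}(J_n)$ is finite (Theorem~\ref{prop: ograniczenie unormowanej miary pakujacej z obu stron}), I would invoke Mattila's density theorem (Theorem~\ref{thm: gestosc miary pakujacej}) to get $\Theta_*^{h_n}(\mathcal{P}_{h_n}|_{J_n}, x) = 1$ for $\mathcal{P}_{h_n}$-a.e.\ $x \in J_n$, and then use the scaling identity \eqref{eq: lower density obs}, which gives $\Theta_*^{h_n}(m_n,x) = 1/\mathcal{P}_{h_n}(J_n)$ at the same points. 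Comparing the two expressions for $\Theta_*^{h_n}(m_n,x)$ on the full-measure set where both hold yields $d_{\min} = 1/\mathcal{P}_{h_n}(J_n)$, i.e. the claimed formula.

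The main obstacle is confirming the orientation-reversing extension of Feng's Theorem~\ref{thm: lower density feng}, since in Feng's setting the contractions are orientation-preserving and our $g_k$ reverse orientation. However, the geometric heart of Feng's argument is the density-invariance encapsulated in Lemma~\ref{lem: niezmienniczosc gestosci}: one approximates a small ball around a typical $x \in J_n$ by a deep cylinder $g_{i_1}\circ\dots\circ g_{i_l}([0,1])$ and then pulls back by the inverse composition, which rescales diameters by $|(g_{i_1}\circ\dots\circ g_{i_l})'|$ and $m_n$-mass by the $h_n$-th power of the same factor. Since this rescaling depends only on $|g_k'|$, not on the sign of $g_k'$, Feng's entire chain of estimates transports unchanged, with intervals $[x-r,x+r]$ centered at a point of $J_n$ playing the role his orientation-preserving images play, and the conclusion $\Theta_*^{h_n}(m_n,x) = d_{\min}$ a.e.\ follows as in the original proof. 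The author has indeed flagged this adaptation as routine, so beyond this verification the theorem is a direct corollary.
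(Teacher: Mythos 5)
Your proposal follows exactly the route the paper takes: the paper derives this statement as the direct adaptation of Theorem \ref{thm: miara pakujaca jako odwrotnosc dmin}, whose proof combines Mattila's density theorem \ref{thm: gestosc miary pakujacej}, the scaling identity \eqref{eq: lower density obs}, and the orientation-reversing version of Feng's Theorem \ref{thm: lower density feng}, which the paper likewise asserts is a routine adaptation. Your additional remark justifying why the orientation-reversing case goes through is a sensible supplement to what the paper leaves implicit, but the argument is the same.
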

\section{Estimating packing measure from below}\label{sec: from below}
In 2002 De-Jun Feng showed in \cite{Feng2} the following result regarding comparing the Hausdorff and packing measures on the real line.
\begin{theorem*}
For each $0 < s < 1$, define
\[
c(s) = \inf \limits_{E} \frac{\mathcal{P}_s(E)}{H_s(E)}
\]
where $\mathcal{P}_s$, $H_s$ denote the s-dimensional packing and Hausdorff measure respectively, and the infimum is taken over all sets $E \subset \mathbb{R}$ with $0<H_s(E)< \infty$. Then
\[
2^s(1+v(s))^s \leq c(s) \leq 2^s(2^{\frac{1}{s}}-1)^s
\]
for each $0< s < 1$ and $v(s) = min\{16^{-\frac{1}{1-s}}, 8^{-\frac{1}{(1-s)^2}} \}$.
\end{theorem*}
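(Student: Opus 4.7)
The plan is to split Feng's estimate into an upper bound via an explicit self-similar example and a lower bound via a density comparison between $\mathcal{P}_s$ and $H_s$.

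For the upper bound $c(s) \le 2^s(2^{1/s}-1)^s$, I would take $K \subset [0,1]$ to be the two-map middle-gap self-similar Cantor set whose linear contractions have common ratio $r = 2^{-1/s}$, so that $2r^s = 1$ normalizes the dimension to $s$. Under the Open Set Condition one has $H_s(K) = 1$. To evaluate $\mathcal{P}_s(K)$ I would invoke Theorem \ref{thm: miara pakujaca jako odwrotnosc dmin}, which identifies $\mathcal{P}_s(K)$ with $d_{\min}^{-1}$ over closed intervals centered in $K$ and contained in the convex hull of $K$. By self-similarity the infimum reduces to a one-parameter family of intervals crossing the top-level gap, and an elementary optimization on that family produces the value $2^s(2^{1/s}-1)^s$. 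As a sanity check, for the classical middle-thirds Cantor set ($r = 1/3$, $s = \log 2 / \log 3$) this collapses to the well-known equality $\mathcal{P}_s(K) = 4^s$.

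For the lower bound, I would fix an arbitrary Borel $E \subset \mathbb{R}$ with $0 < H_s(E) < \infty$. Theorem \ref{thm: gestosc miary pakujacej} applied to $\mathcal{P}_s|_E$ yields $\liminf_{r \to 0} \mathcal{P}_s(E \cap [x-r,x+r])/(2r)^s = 1$ for $\mathcal{P}_s$-a.e.\ $x \in E$. On the other hand, the classical density theorem for Hausdorff measure on $\mathbb{R}$ gives $\limsup_{r \to 0} H_s(E \cap [x-r,x+r])/(2r)^s \le 2^{-s}$ for $H_s$-a.e.\ $x \in E$. A Vitali covering argument that converts this pointwise density comparison into a global one immediately produces $\mathcal{P}_s(E) \ge 2^s H_s(E)$, i.e.\ the weak bound $c(s) \ge 2^s$.

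The technical core of the proof, and where I expect the main obstacle to lie, is promoting the naive factor $2^s$ to the sharp factor $2^s(1+v(s))^s$. The geometric idea is that near a typical $x \in E$ the Hausdorff measure cannot be perfectly concentrated around $x$: any near-optimal Hausdorff covering of $E$ must leave a visible gap in $E$ on at least one side of $x$, of length at least $v(s)\, r$ at scale $r$. This gap allows one to enlarge the competing packing ball from radius $r$ to radius $r(1 + v(s)/2)$, contributing $(1+v(s))^s (2r)^s$ instead of $(2r)^s$ to the packing sum, and this refinement survives the Vitali argument. Quantifying the forced gap amounts to a combinatorial optimization with two competing strategies — a single-step density refinement producing the exponent $1/(1-s)$, and an iterated subdivision scheme producing $1/(1-s)^2$ — which is precisely the source of the two terms in $v(s) = \min\{16^{-1/(1-s)}, 8^{-1/(1-s)^2}\}$. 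Pushing the bookkeeping of these two schemes through without losing the sharp constant is the place where the real difficulty sits.
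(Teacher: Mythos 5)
First, a point of context: the paper does not prove this statement at all --- it is quoted from Feng \cite{Feng2} as a known result and then bypassed entirely, since Section \ref{sec: from below} derives the only consequence it needs, $\liminf_{n\to\infty}\mathcal{P}_{h_n}(J_n)\geq 2$, directly from a single explicit interval centered at the fixed point of $g_n\circ g_1$ (Theorem \ref{thm: ograniczenie z dolu}). So there is no proof in the paper to compare yours against; what can be judged is whether your sketch would establish Feng's theorem. Your upper bound is a correct outline: the symmetric two-branch self-similar set with ratio $2^{-1/s}$ does have $H_s=1$ and $\mathcal{P}_s=2^s(2^{1/s}-1)^s$ (your middle-thirds sanity check $4^s$ is right), Theorem \ref{thm: miara pakujaca jako odwrotnosc dmin} is the right tool, and the remaining optimization over intervals straddling the gaps is genuine but routine.

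The lower bound, however, has two real gaps. (i) The density theorem you invoke is misstated: for an $s$-set $E\subset\mathbb{R}$ the classical result bounds the \emph{upper} spherical density of $H_s|_E$ between $2^{-s}$ and $1$ almost everywhere; the inequality $\limsup_{r\to 0}H_s(E\cap[x-r,x+r])/(2r)^s\leq 2^{-s}$ is not that theorem and is false in general. What your Vitali/Frostman step (the lemma quoted before Theorem \ref{prop: ograniczenie unormowanej miary pakujacej z obu stron}) actually requires is control of the \emph{lower} density, $\liminf_{r\to 0}H_s(E\cap[x-r,x+r])/(2r)^s\leq 2^{-s}$ for a.e.\ $x$, and even this ``naive'' estimate is not a citation to a standard theorem --- it already needs an argument exploiting $s<1$ and the irregularity of $E$. (ii) More seriously, the entire passage from the factor $2^s$ to $2^s(1+v(s))^s$ --- which is the actual content of Feng's theorem and the reason it merits a paper --- is described rather than proved: the claim that any near-optimal Hausdorff covering forces a visible gap of length at least $v(s)r$ on one side of a typical point is precisely the statement that has to be established, and your two ``competing strategies'' read as a plausible reverse-engineering of the shape of $v(s)$ rather than an argument. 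As written, the proposal proves at most the upper bound and gestures at the lower one.
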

This result is a general estimate of the quotient of packing and Hausdorff measure. This result implies the following estimate.
\[
\liminf \limits_{n \to \infty} \mathcal{P}_{h_n}(J_n) \geq 2
\]
However, we are going to provide a short and direct proof of this fact for our case.
\begin{theorem}\label{thm: ograniczenie z dolu}
Let $S_n$ be iterated function system defined in (\ref{IFS Sn}). By $J_n$ denote the limit set of the IFS $S_n$. $\mathcal{P}_h$ denotes packing measure in packing dimension $h$ and $h_n$ the packing dimension of $J_n$. Then
\[
\liminf\limits_{n\to \infty} \mathcal{P}_{h_n}(J_n) \geq  2
\]
\end{theorem}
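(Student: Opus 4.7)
The plan is to apply Theorem~\ref{thm: tw o gestosci miary pakujacej}, which rewrites $\mathcal{P}_{h_n}(J_n)$ as $(\inf_F d_n(F))^{-1}$ with $F$ ranging over closed intervals in $[0,1]$ centered in $J_n$. It therefore suffices to exhibit a sequence of such intervals $F_n$ satisfying $\limsup_n d_n(F_n) \le \tfrac12$.

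My candidate is $F_n := [0, 2x_n]$ with $x_n := \min J_n$, which is centered at $x_n \in J_n$ and contained in $[0,1]$ for $n \ge 2$. Because $g_1$ and $g_n$ reverse orientation, one has $\min J_n = g_n(\max J_n)$ and $\max J_n = g_1(\min J_n)$; solving this little pair gives the explicit value $x_n = \tfrac{2n}{2n^2+2n-1} \sim \tfrac{1}{n+1}$. Letting $k^*_n := \lceil 1/(2x_n)\rceil \sim \tfrac{n+1}{2}$, only the level-one cylinders $[\tfrac{1}{k+1},\tfrac{1}{k}]$ with $k^*_n-1 \le k \le n$ can meet $F_n$, so by conformality
\[
m_n(F_n) \le \sum_{k=k^*_n-1}^{n} \rho_k^{h_n},\qquad \rho_k := \tfrac{1}{k(k+1)}.
\]
At $h=1$ this sum telescopes to $\tfrac{1}{k^*_n-1} - \tfrac{1}{n+1} \sim \tfrac{2}{n}$, matched by $2x_n \sim \tfrac{2}{n}$, so heuristically $d_n(F_n) \to \tfrac12$.

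Promoting this heuristic to a genuine asymptotic is the main technical step: I must replace $\rho_k^{h_n}$ by $\rho_k$ uniformly in $k \le n$ and $(2x_n)^{h_n}$ by $2x_n$, which reduces to controlling the rate $h_n \to 1$. A first-order expansion of the pressure equation $\sum_{k=1}^n \rho_k^{h_n} = 1$ against the telescoping identity $\sum_{k=1}^\infty \rho_k = 1$ yields $1-h_n = O(1/n)$, which is comfortably enough to make $(k(k+1))^{1-h_n} \to 1$ uniformly for $k \le n$ and $(n/2)^{1-h_n} \to 1$. Both replacements then cost only a multiplicative $(1+o(1))$ factor, so the heuristic becomes rigorous: $\limsup_n d_n(F_n) \le \tfrac12$, and Theorem~\ref{thm: tw o gestosci miary pakujacej} delivers $\liminf_n \mathcal{P}_{h_n}(J_n) \ge 2$.
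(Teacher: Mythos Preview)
Your argument is correct: the test interval $F_n=[0,2x_n]$ is centered at $x_n=\min J_n\in J_n$, your upper bound $m_n(F_n)\le\sum_{k=k_n^*-1}^n\rho_k^{h_n}$ is valid, and the rate $1-h_n=O(1/n)$ (which does follow from the pressure equation by the convexity bound $e^t-1\ge t$ applied termwise) is enough to replace $\rho_k^{h_n}$ by $\rho_k$ uniformly for $k\le n$ and $(2x_n)^{h_n}$ by $2x_n$, yielding $\limsup_n d_n(F_n)\le\tfrac12$.

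The paper, however, sidesteps the whole rate-of-convergence issue by a better choice of test interval. It also centers at $x_n$, but takes the \emph{right} endpoint to be $\tfrac{1}{n}$ rather than $2x_n$. Since the left endpoint then lands below $\tfrac{1}{n+1}$, the interval meets $J_n$ only in the single first-level cylinder $[\tfrac{1}{n+1},\tfrac{1}{n}]$, so its $m_n$-measure is \emph{exactly} $\rho_n^{h_n}$. The density is therefore a single ratio raised to the $h_n$,
\[
d_n(I_n)=\Bigl(\tfrac12\Bigr)^{h_n}\Bigl(\tfrac{2n^2+2n-1}{2n^2+n-1}\Bigr)^{h_n},
\]
which visibly tends to $\tfrac12$ using only $h_n\to1$, with no need for $1-h_n=O(1/n)$, no sum over cylinders, and no uniformity argument. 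Your route works, but buys the same conclusion at the cost of an auxiliary lemma that the paper's choice of interval makes unnecessary.
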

\begin{proof}

From definition of $g_k$, we know that
\[
g_1(x) = -\frac{1}{2}x + 1
\]
and 
\[
g_n(x) = \left ( \frac{1}{n+1} - \frac{1}{n} \right) x + \frac{1}{n}
\]
for $x \in [0,1]$. The leftmost point in $J_n$ is a stationary point such that $g_n\circ g_1(x_n) = x_n$. Short computation yields that
\[
x_n = \frac{2n}{2n^2+2n-1}
\]
Let us define an interval $I_n = [a_n,b_n]$ such that it is centered at $x_n$, and $b_n = \frac{1}{n}$. Then, because $x_n$ is the left endpoint of the set $J_n$, the following holds
\[
m_n([a_n,b_n]) = m_n\left(\left[\frac{1}{n+1}, \frac{1}{n}\right]\right) = \left|\frac{1}{n}-\frac{1}{n+1}\right|^{h_n}
\]
and 
\[
|b_n - a_n| = 2 \cdot | b_n - x_n| = 2\cdot \left( \frac{1}{n} - \frac{2n}{2n^2+2n-1} \right) = 2\cdot \frac{2n-1}{n\left(2n^2+2n-1\right)}
\]
Hence
\[
d_n(I_n) = \frac{m_n([a_n,b_n])}{|b_n-a_n|^{h_n}} = \frac{\left|\frac{1}{n}-\frac{1}{n+1}\right|^{h_n}}{ \left(2\cdot \frac{2n-1}{n\left(2n^2+2n-1\right)}\right)^{h_n}} = 
\left(\frac{1}{2}\right)^{h_n} \cdot \left ( \frac{2n^2+2n-1}{2n^2+n-1} \right)^{h_n}
\]
We showed that for each $J_n$, there is an interval $I_n$, centered at $J_n$ with density equal to $\left(\frac{1}{2}\right)^{h_n} \cdot \left ( \frac{2n^2+2n-1}{2n^2+n-1} \right)^{h_n}$, and thus based on Theorem \ref{thm: tw o gestosci miary pakujacej} we get
\[
 \mathcal{P}_{h_n}(J_n) \geq \left [\left(\frac{1}{2}\right)^{h_n} \cdot \left ( \frac{2n^2+2n-1}{2n^2+n-1} \right)^{h_n} \right]^{-1}
\]
which implies
\[
\liminf \limits_{n \to \infty}  \mathcal{P}_{h_n}(J_n) \geq 2
\]
\end{proof}
\section{Estimating packing measure from above}\label{sec: from above}
We will show that the function, which assigns to every number $n \in \mathbb{N}$ the packing measure of the $J_n$ in its packing dimension $h_n$ has the limit equal to $2$ when $n$ tends to infinity. We do this in several steps - first by showing that the lower limit of the densities of the intervals $[0,r]$ is at least 1/2, then expanding the family of intervals with this property up to a family of intervals that has an right endpoint in set $\frac{1}{k}$, $k = 0,1, \dots$ - in Lemmas \ref{k+l do k} - \ref{dlugie lewe}. Then, we deduce the same about the intervals with left endpoint in set $\frac{1}{k}$, $k = 0,1, \dots$ - Lemmas \ref{krotkie prawe} - \ref{dlugie prawe}. Then, in the final propositions we use previously obtained estimates to show that any interval centered at $J_n$ that intersects set $\frac{1}{k}$, $k = 0,1, \dots$ has the limit of the densities at least 1/2. From those estimates, we will be able to deduce that all intervals centered at $J_n$ have this property. 
\subsection{Density on intervals $[0,r]$ and $\left[\frac{1}{k}, \frac{1}{l}\right]$}
\begin{theorem}\label{thm: zero-r}
\[
   \liminf \limits_{n \to \infty} \inf \left \{d_n([0,r]): r \in [0,1],  \text{ and interval } [0,r] \text{ is centered at } J_n  \right \} \geq \frac{1}{2}
\]
\end{theorem}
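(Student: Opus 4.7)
The plan is to reduce the infimum to a finite minimization parameter-ized by the dyadic index $l$ defined by $r \in [\tfrac{1}{l+1}, \tfrac{1}{l}]$. First, since $[0,r]$ must be centered at a point of $J_n \subset [x_n, 1]$ with $x_n = \tfrac{2n}{2n^2+2n-1}$, one has $r \geq 2x_n$, which restricts the scale to $1 \leq l \leq l_{\max}(n) := \lfloor 1/(2x_n) \rfloor \sim n/2$. My target is a lower bound on $d_n([0,r])$ depending only on $l$, whose infimum over the admissible range tends to $1/2$ as $n \to \infty$.

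For fixed $l$ and $r \in [\tfrac{1}{l+1}, \tfrac{1}{l}]$, I would discard the mass inside the cylinder $g_l([0,1])$ and evaluate the remainder by conformality of $m_n$:
\[
m_n([0,r]) \;\geq\; m_n\bigl([0,\tfrac{1}{l+1}]\bigr) \;=\; \sum_{k=l+1}^{n} \Bigl(\tfrac{1}{k(k+1)}\Bigr)^{h_n}.
\]
Lemma \ref{lem: wypuklosc z hn} applied with $c = h_n < 1$ then telescopes the sum into a single term:
\[
\sum_{k=l+1}^{n} \Bigl(\tfrac{1}{k(k+1)}\Bigr)^{h_n} \;\geq\; \Bigl(\sum_{k=l+1}^{n} \tfrac{1}{k(k+1)}\Bigr)^{h_n} \;=\; \Bigl(\tfrac{1}{l+1} - \tfrac{1}{n+1}\Bigr)^{h_n}.
\]
Combining this with the elementary bound $r^{h_n} \leq (1/l)^{h_n}$ gives the clean estimate
\[
d_n([0,r]) \;\geq\; F_n(l)^{h_n}, \qquad F_n(l) := \frac{l(n-l)}{(l+1)(n+1)}.
\]

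The final step is an analysis of $F_n$. A short calculation of $\frac{d}{dl}\log F_n(l)$ shows that $F_n$ is unimodal in $l$ with maximum near $l = \sqrt{n}$, so its minimum over the admissible range $[1, l_{\max}(n)]$ is attained at one of the two endpoints. Direct evaluation yields $F_n(1) = \tfrac{n-1}{2(n+1)} \to \tfrac12$, and at the upper endpoint $l_{\max}(n) \sim n/2$ one gets $F_n(l_{\max}(n)) \to \tfrac12$ as well. Since $h_n \to 1$, both endpoint bounds $F_n(\cdot)^{h_n}$ tend to $1/2$, which yields $\liminf_{n \to \infty} \inf_r d_n([0,r]) \geq \tfrac12$, as claimed.

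The main obstacle is essentially absent here: the whole proof is one short computation, with Lemma \ref{lem: wypuklosc z hn} doing the key work of converting a sum of $h_n$-powers into the $h_n$-power of a telescoping sum. The only point requiring mild care is the unimodality of $F_n$, which lets me replace the minimization over all admissible $l$ with the comparison of just two endpoint values; without this observation one might wrongly expect a monotonicity argument to suffice.
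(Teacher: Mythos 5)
Your proposal is correct and follows essentially the same route as the paper's own proof: both reduce to the scale index $l$ (the paper's $k$) with $l \lesssim n/2$ from the centering condition, bound the measure below by the telescoped sum via Lemma \ref{lem: wypuklosc z hn} and the diameter above by $1/l$, arrive at the identical bound $\left[\frac{l(n-l)}{(l+1)(n+1)}\right]^{h_n}$, and conclude by the same unimodality/endpoint comparison. No substantive differences to report.
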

\begin{proof}
Notice that there exists $k \in \{1,2,\dots n-1\}$ such that $\frac{1}{k+1}<r\leq \frac{1}{k}$. Such $k$ exists, because $[0,r]$ is centered at $J_n$. Moreover, the fact that $[0,r]$ is centered at $J_n$, implies that $r/2>\frac{1}{n+1}$ and thus $\frac{1}{k}>\frac{2}{n+1}$, giving $k < (n+1)/2$. Then, by Lemma \ref{lem: wypuklosc z hn}
\[
d_n([0,r]) = \frac{m_n([0,r])}{(diam([0,r]))^{h_n}} \geq \frac{\sum\limits_{j = k+1}^n (1/j - 1/(j+1))^{h_n}}{(1/k)^{h_n}} \geq \frac{|\frac{1}{k+1}- \frac{1}{n+1}|^{h_n}}{|\frac{1}{k}|^{h_n}} \geq
\]
\[ 
\geq \left[\frac{n-k}{n+1} \cdot \frac{k}{k+1}\right ]^{h_n}
\]
Because $k<\frac{n+1}{2}$, we can see that the minimum value of this expression is attained at $k = 1$ or $k = \frac{n+1}{2}$. Indeed
\[
\frac{\partial}{\partial k} \frac{n-k}{n+1} \cdot \frac{k}{k+1}  = \frac{n-k(k+2)}{(n+1)(k+1)^2}
\]
is equal to zero if and only if $n - k(k+2) = 0$, hence $k = \sqrt{n+1} - 1$ - at which this expression attains maximum. Thus, the minimum values are attained at the edge of domain of the function. The value of the expression for $k = 1$ is equal to 
\[
\left[\frac{n-1}{n+1} \cdot \frac{1}{2}\right ]^{h_n}
\]
and for $k = \frac{n+1}{2}$
\[
\left[\frac{n-\frac{n+1}{2}}{n+1} \cdot \frac{\frac{n+1}{2}}{\frac{n+1}{2}+1}\right ]^{h_n} 
= \left [ \frac{1}{2} - \frac{2}{n+3}\right ]^{h_n}
\]
Hence
\[
 \lim \limits_{n \to \infty} \inf \left \{d_n([0,r]): r \in [0,1],  \text{ and interval } [0,r] \text{ is centered at } J_n  \right \} \geq \frac{1}{2}
\]
\end{proof}
Note, that the Theorem \ref{thm: zero-r} requires only $r>\frac{2}{n+1}$.  Hence, we will be using the following Corollary in the next steps.
\begin{corollary}\label{col: zero - r uproszczony}
\[
   \liminf \limits_{n \to \infty} \inf \left \{d_n([0,r]): r \in \left[\frac{2}{n+1}, 1 \right ] \right \} \geq \frac{1}{2}
\]
\end{corollary}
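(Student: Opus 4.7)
The plan is to observe that Corollary~\ref{col: zero - r uproszczony} is essentially a slight strengthening of Theorem~\ref{thm: zero-r} that follows from the same argument. I would reread the proof of Theorem~\ref{thm: zero-r} carefully and locate the two places where the centering hypothesis on $[0,r]$ entered. The first is the assertion that there exists $k \in \{1,2,\ldots,n-1\}$ with $\tfrac{1}{k+1} < r \leq \tfrac{1}{k}$, which requires $r > 1/n$. The second is the bound $k < (n+1)/2$, which was derived from $r/2 > 1/(n+1)$, i.e., $r > 2/(n+1)$.

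Under the weaker hypothesis $r \in [\tfrac{2}{n+1}, 1]$, I would verify that both facts still hold. For the first, note that $\tfrac{2}{n+1} > \tfrac{1}{n}$ whenever $n \geq 2$, so the required $k$ exists (and the small values of $n$ do not affect the liminf as $n \to \infty$). For the second, $\tfrac{1}{k} \geq r \geq \tfrac{2}{n+1}$ now yields $k \leq (n+1)/2$, i.e., essentially the same range of $k$ with $\leq$ in place of $<$. Once these two checks are in place, the entire chain of inequalities in the proof of Theorem~\ref{thm: zero-r}, culminating in the minimization of $\tfrac{n-k}{n+1}\cdot\tfrac{k}{k+1}$ at the endpoints of the interval $[1, (n+1)/2]$, proceeds verbatim.

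The only subtle point is the boundary case $k = (n+1)/2$, which was strictly excluded in the original proof but is now allowed (for $n$ odd and $r = \tfrac{2}{n+1}$). Evaluating the expression there gives $\tfrac{(n-1)/2}{n+1} \cdot \tfrac{(n+1)/2}{(n+3)/2} = \tfrac{n-1}{2(n+3)}$, which still tends to $\tfrac{1}{2}$ as $n \to \infty$. Raising to the power $h_n$ and using $h_n \to 1$ then delivers the desired lower bound $\tfrac{1}{2}$. The main ``obstacle,'' such as it is, is therefore bookkeeping: confirming that no step in the original argument invoked $r/2 \in J_n$ in a way more substantial than the derivation of the inequality $r > 2/(n+1)$.
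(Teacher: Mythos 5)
Your proposal is correct and matches the paper's own justification: the paper proves the corollary simply by remarking that the proof of Theorem \ref{thm: zero-r} only uses the consequence $r>\frac{2}{n+1}$ of the centering hypothesis, which is exactly the bookkeeping you carry out (including the harmless boundary case $k=(n+1)/2$).
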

Now, we move to showing similar property for intervals with endpoints in the set $\left\{ \frac{1}{k} \right\}$, $k = 1, 2, \dots n$. 
\begin{theorem}\label{k+l do k}
\begin{equation*}
\begin{split}    
   \lim \limits_{n \to \infty} \inf \Bigg\{ d_n\left(\left[\frac{1}{k+l},\frac{1}{k}\right]\right) &: k \in \{1,2, \dots n \}, 
    \\
   &  l+k \in \{k+1, k+2, \dots, n+1\} \Bigg\} \geq 1
\end{split}
\end{equation*}
\end{theorem}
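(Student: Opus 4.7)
The plan is to decompose $[1/(k+l), 1/k]$ into first-generation cylinders of $S_n$, compute their $m_n$-masses via conformality, and then apply the convexity inequality Lemma \ref{lem: wypuklosc z hn} together with a telescoping identity. The key observation is that the endpoints $1/k$ and $1/(k+l)$ lie exactly on the boundary of the first-level partition of $[0,1]$ induced by $S_n$, so the decomposition is exact and no loss occurs.

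Concretely, since $k \geq 1$ and $k+l \leq n+1$, every index $j \in \{k, k+1, \ldots, k+l-1\}$ belongs to $\{1,\ldots,n\}$, so the cylinders $I_j := g_j([0,1]) = [1/(j+1), 1/j]$ are all available in the IFS $S_n$, and they tile $[1/(k+l), 1/k]$ with pairwise disjoint interiors. By the $h_n$-conformality of $m_n$ (via Lemma \ref{lem: niezmienniczosc gestosci}) together with the normalization $m_n(J_n) = 1$, each satisfies $m_n(I_j) = |g_j'|^{h_n} = (1/j - 1/(j+1))^{h_n}$. Applying Lemma \ref{lem: wypuklosc z hn} with $c = h_n$ and $a_j = 1/j - 1/(j+1)$, and then telescoping the sum, yields
\[
m_n\left(\left[\frac{1}{k+l}, \frac{1}{k}\right]\right) = \sum_{j=k}^{k+l-1} a_j^{h_n} \geq \left(\sum_{j=k}^{k+l-1} a_j\right)^{h_n} = \left(\frac{1}{k} - \frac{1}{k+l}\right)^{h_n}.
\]
Since the right-hand side is exactly $(\mathrm{diam}[1/(k+l), 1/k])^{h_n}$, we conclude $d_n([1/(k+l), 1/k]) \geq 1$ for every admissible triple $(n,k,l)$, and therefore the infimum, and hence the limit, is at least $1$.

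I do not anticipate a real obstacle for this lemma: the alignment of the endpoints $1/k$ and $1/(k+l)$ with the first-level partition is what makes the cylinder decomposition exact, and Lemma \ref{lem: wypuklosc z hn} supplies precisely the subadditivity of $x \mapsto x^{h_n}$ needed to convert the telescoping identity into the density bound. In fact, the argument yields $d_n \geq 1$ uniformly in $n$ (not merely in the limit), and the bound is sharp: equality holds at the single-cylinder case $l = 1$, where $d_n$ equals $1$ by conformality alone.
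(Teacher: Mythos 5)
Your proposal is correct and is essentially identical to the paper's own proof: both decompose $[\frac{1}{k+l},\frac{1}{k}]$ into the first-generation cylinders $[\frac{1}{j+1},\frac{1}{j}]$, $j=k,\dots,k+l-1$, sum their conformal masses, and apply Lemma \ref{lem: wypuklosc z hn} followed by telescoping to obtain $d_n \geq 1$. Your added remarks on uniformity in $n$ and sharpness at $l=1$ are accurate but not needed for the statement.
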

\begin{proof}
Notice that using Lemma \ref{lem: wypuklosc z hn}, we get
\[
d_n\left(\left[\frac{1}{k+l},\frac{1}{k}\right]\right) = \frac{m_n\left(\left[\frac{1}{k+l},\frac{1}{k}\right]\right)}{\left|\left[\frac{1}{k+l},\frac{1}{k}\right]\right|^{h_n}} \geq \frac{\left|\sum \limits_{j=k}^{k+l-1} \frac{1}{j} - \frac{1}{j+1} \right |^{h_n}}{\left|\frac{1}{k}-\frac{1}{k+l}\right|^{h_n}} = \frac{\left|{\frac{1}{k} - \frac{1}{k+l}}\right|^{h_n}}{\left|\frac{1}{k}-\frac{1}{k+l}\right|^{h_n}} = 1
\]
Hence, obviously
\begin{equation*}
\begin{split}    
   \liminf \limits_{n \to \infty} \inf \Bigg\{ d_n\left(\left[\frac{1}{k+l},\frac{1}{k}\right]\right) &: k \in \{1,2, \dots n \}, 
    \\
   &  l+k \in \{k+1, k+2, \dots, n+1\} \Bigg\} \geq 1
\end{split}
\end{equation*}
\end{proof}
\subsection{Density on intervals with right endpoint in set $\frac{1}{k}$, $k\in \mathbb{N}$}
Now, we will prove that the lower limit of the densities of intervals contained in $[\frac{1}{k+1},\frac{1}{k}]$ for some $k = 1, 2, \dots$, and having the right endpoint equal to $\frac{1}{k}$ is at least $\frac{1}{2}$.
\begin{theorem}\label{krotkie lewe}
\begin{equation*}
\begin{split}
\liminf \limits_{n \to \infty} \inf  \Bigg\{d_n([r,\frac{1}{k}]) &: k \in \{1,2, \dots, n\},
\\
& r \in [\frac{1}{k+1}, \frac{1}{k}), \ interval \ centered \ at \ J_n \Bigg \} \geq \frac{1}{2}
\end{split}
\end{equation*}
\end{theorem}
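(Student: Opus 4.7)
The plan is to reduce any such interval $[r, 1/k]$ to an interval of the form $[0, s]$ and then invoke Corollary \ref{col: zero - r uproszczony}. The key observation is that $[r, 1/k] \subset [1/(k+1), 1/k] = g_k([0,1])$, so $[r, 1/k] = g_k([0, s])$ where $s := f_k(r) = k(k+1)(1/k - r)$. By Lemma \ref{lem: niezmienniczosc gestosci} applied with $A = [0,s]$, we immediately get $d_n([r, 1/k]) = d_n([0, s])$. Thus the whole theorem is equivalent to a statement about intervals of the form $[0, s]$, for which we have already done the work in the previous subsection.

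Next, I would verify that the transported interval is long enough for Corollary \ref{col: zero - r uproszczony} to apply, i.e. $s \geq 2/(n+1)$. The hypothesis that $[r, 1/k]$ is centered at $J_n$ means its midpoint $c = (r + 1/k)/2$ lies in $J_n \cap [1/(k+1), 1/k] = g_k(J_n)$. Since $f_k$ is linear, $f_k(c) = s/2$, and this point must belong to $J_n$, hence $s/2 \geq x_n$, where $x_n = 2n/(2n^2 + 2n - 1)$ is the leftmost point of $J_n$ computed in the proof of Theorem \ref{thm: ograniczenie z dolu}. The elementary inequality $4n(n+1) \geq 4n^2 + 4n - 2$ gives $2x_n \geq 2/(n+1)$, so $s \geq 2/(n+1)$ as required.

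Combining the two steps, for every admissible pair $(k, r)$ the density $d_n([r, 1/k])$ coincides with $d_n([0, s])$ for some $s \in [2/(n+1), 1]$. Taking infima yields
\[
\inf_{(k,r)} d_n([r, 1/k]) \;\geq\; \inf_{s \in [2/(n+1), 1]} d_n([0, s]),
\]
and passing to the liminf in $n$ together with Corollary \ref{col: zero - r uproszczony} gives the desired bound of $\tfrac{1}{2}$. I do not expect any real obstacle here: the argument is essentially a conformal change of variables via $g_k$, and the only subtle point — that the shrinking of the interval under $f_k$ does not push $s$ below the threshold $2/(n+1)$ — is settled by the explicit form of $x_n$.
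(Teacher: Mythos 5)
Your proposal is correct and follows essentially the same route as the paper: both transport $[r,1/k]$ to $[0,s]$ via $g_k$ using Lemma \ref{lem: niezmienniczosc gestosci} and then fall back on the $[0,r]$ result. The only cosmetic difference is that the paper observes directly that the affine map sends the midpoint to the midpoint, so $[0,s]$ is again centered at $J_n$ and Theorem \ref{thm: zero-r} applies, whereas you convert centeredness into the length bound $s\geq 2/(n+1)$ via the leftmost point $x_n$ and invoke Corollary \ref{col: zero - r uproszczony}; both are valid.
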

\begin{proof}
Note that $[r,\frac{1}{k}] \subseteq [\frac{1}{k+1},\frac{1}{k}]$ and thus applying Lemma \ref{lem: niezmienniczosc gestosci} we get
\[
d_n ([r,\frac{1}{k}]) = d_n([0,\hat{r}])
\]
where $r = g_{k}(\hat{r})$.
Now we notice that centres of the intervals are transformed under $g_k$ to the centers of the intervals, and center of $[r,\frac{1}{k}]$ is in $J_n$. Thus, the interval $[0,\hat{r}]$ is also centered at $J_n$ and based on Theorem \ref{thm: zero-r} we get 
\begin{equation*}
\begin{split}
\liminf \limits_{n \to \infty} \inf  \Bigg\{d_n([r,\frac{1}{k}]) &: k \in \{1,2, \dots, n\},
\\
& r \in [\frac{1}{k+1}, \frac{1}{k}), \ interval \ centered \ at \ J_n \Bigg \} \geq \frac{1}{2}
\end{split}
\end{equation*}
\end{proof}
Similarly to Corollary \ref{col: zero - r uproszczony}, notice that the proof of this theorem only requires $r > |\frac{1}{k}-\frac{1}{k+1}| \cdot \frac{2}{n+1}$. This yields another corollary, used later on.
\begin{corollary}\label{cor: krotkie lewe bez centrowania}
\begin{equation*}
\begin{split}
\liminf \limits_{n \to \infty} \inf  \Bigg\{d_n([r,\frac{1}{k}]) &: k \in \{1,2, \dots, n\},
\\
& r \in [\frac{1}{k+1}, \frac{1}{k}), \\ & \left|\frac{1}{k} - r\right| > \left|\frac{1}{k}-\frac{1}{k+1}\right| \cdot \frac{2}{n+1} \Bigg \} \geq \frac{1}{2}
\end{split}
\end{equation*}
\end{corollary}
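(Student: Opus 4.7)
The plan is to replay the argument of Theorem \ref{krotkie lewe} verbatim and observe that the ``centered at $J_n$'' hypothesis was used only to guarantee that the pulled-back interval falls into the range to which Corollary \ref{col: zero - r uproszczony} (not Theorem \ref{thm: zero-r}) already applies.

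First I would pull the interval back through the contraction $g_k$. Since $g_k$ is the decreasing linear bijection of $[0,1]$ onto $[\tfrac{1}{k+1},\tfrac{1}{k}]$ with $g_k(0)=\tfrac{1}{k}$, writing $r=g_k(\hat r)$ we have $[r,\tfrac{1}{k}]=g_k([0,\hat r])$. By Lemma \ref{lem: niezmienniczosc gestosci} this gives
\[
d_n\!\left(\left[r,\tfrac{1}{k}\right]\right)=d_n\!\left([0,\hat r]\right).
\]

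Next I would translate the hypothesis on $r$. Because $g_k$ is linear with $|g_k'|=|\tfrac{1}{k}-\tfrac{1}{k+1}|$, we have
\[
\left|\tfrac{1}{k}-r\right|=|g_k'|\cdot \hat r=\left|\tfrac{1}{k}-\tfrac{1}{k+1}\right|\cdot \hat r,
\]
so the assumption $|\tfrac{1}{k}-r|>|\tfrac{1}{k}-\tfrac{1}{k+1}|\cdot\tfrac{2}{n+1}$ is equivalent to $\hat r>\tfrac{2}{n+1}$, i.e.\ $\hat r\in[\tfrac{2}{n+1},1]$.

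Finally, Corollary \ref{col: zero - r uproszczony} says precisely that
\[
\liminf_{n\to\infty}\inf\left\{d_n([0,\hat r]):\hat r\in\left[\tfrac{2}{n+1},1\right]\right\}\geq \tfrac{1}{2},
\]
and since the infimum on the left in our statement is bounded below by the infimum in Corollary \ref{col: zero - r uproszczony} (after the pullback), the conclusion follows. There is no genuine obstacle here: the only thing to verify carefully is that the hypothesis $|\tfrac{1}{k}-r|>|\tfrac{1}{k}-\tfrac{1}{k+1}|\cdot\tfrac{2}{n+1}$ is exactly the right relaxation of ``centered at $J_n$'' to keep $\hat r$ inside the range where Corollary \ref{col: zero - r uproszczony} applies.
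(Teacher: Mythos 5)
Your proposal is correct and matches the paper's own justification: the paper derives this corollary by remarking that the proof of Theorem \ref{krotkie lewe} uses the centering hypothesis only to ensure $\hat r > \frac{2}{n+1}$ after pulling back by $g_k$, so that Corollary \ref{col: zero - r uproszczony} applies. Your explicit computation that $\left|\frac{1}{k}-r\right| = \left|\frac{1}{k}-\frac{1}{k+1}\right|\cdot\hat r$ makes the equivalence precise, which the paper leaves implicit.
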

To expand this further, we will show that the lower limit of the densities of the intervals with right endpoint equal to $\frac{1}{k}$, $k \in \mathbb{N}$ and containing interval $[\frac{1}{k+1}, \frac{1}{k}]$ is at least 1/2.
\begin{theorem}\label{dlugie lewe}
\[
\liminf \limits_{n \to \infty} \inf \left \{d_n\left(\left[r,\frac{1}{k}\right]\right): k\in \{1, 2, \dots, n\}, r \in (0, 1/(k+1)), \text{centered at } J_n \right \} \geq \frac{1}{2}
\]
\end{theorem}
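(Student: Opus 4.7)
The plan is to decompose $\left[r, \frac{1}{k}\right]$ at a point of the form $\frac{1}{k'}$ and reduce to previously handled cases. Let $k' \geq k+1$ be the integer with $r \in \left[\frac{1}{k'+1}, \frac{1}{k'}\right)$; the edge case $r < \frac{1}{n+1}$ is handled separately below. By Lemma \ref{lem: rozdzial gestosci na minimum} applied to $\left[r, \frac{1}{k}\right] = \left[r, \frac{1}{k'}\right] \cup \left[\frac{1}{k'}, \frac{1}{k}\right]$,
\[
d_n\left(\left[r, \frac{1}{k}\right]\right) \geq \min\left\{d_n\left(\left[r, \frac{1}{k'}\right]\right),\ d_n\left(\left[\frac{1}{k'}, \frac{1}{k}\right]\right)\right\}.
\]
Theorem \ref{k+l do k} gives $d_n\left(\left[\frac{1}{k'}, \frac{1}{k}\right]\right) \geq 1$. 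Since $\left[r, \frac{1}{k'}\right] \subseteq \left[\frac{1}{k'+1}, \frac{1}{k'}\right]$, Corollary \ref{cor: krotkie lewe bez centrowania} yields $\liminf_n d_n\left(\left[r, \frac{1}{k'}\right]\right) \geq \frac{1}{2}$ whenever the separation $\left|\frac{1}{k'} - r\right| > \left|\frac{1}{k'} - \frac{1}{k'+1}\right| \cdot \frac{2}{n+1}$ holds, completing this case.

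If instead $\left|\frac{1}{k'} - r\right| \leq \left|\frac{1}{k'} - \frac{1}{k'+1}\right| \cdot \frac{2}{n+1}$, the corollary does not apply, but $\left|\frac{1}{k'} - r\right|$ is negligible compared to $\left|\frac{1}{k} - \frac{1}{k'}\right|$. I bypass the decomposition and estimate directly. By Lemma \ref{lem: wypuklosc z hn}, $m_n\left(\left[\frac{1}{k'}, \frac{1}{k}\right]\right) \geq \left(\frac{1}{k} - \frac{1}{k'}\right)^{h_n}$, and a short computation using the elementary inequality $k/((k'-k)(k'+1)) \leq 1$ for $k' \geq k+1$ gives $\left|\frac{1}{k} - r\right| \leq \left|\frac{1}{k} - \frac{1}{k'}\right|\bigl(1 + \frac{2}{n+1}\bigr)$. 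Hence
\[
d_n\left(\left[r, \frac{1}{k}\right]\right) \geq \left(\frac{1}{1 + \frac{2}{n+1}}\right)^{h_n} \longrightarrow 1.
\]

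The edge case $r < \frac{1}{n+1}$ uses the centering hypothesis essentially. Since the leftmost point $x_n$ of $J_n$ satisfies $x_n > \frac{1}{n+1}$ (computed in the proof of Theorem \ref{thm: ograniczenie z dolu}), the set $J_n$ has no mass in $\left[r, \frac{1}{n+1}\right]$, so $m_n\left(\left[r, \frac{1}{k}\right]\right) = m_n\left(\left[\frac{1}{n+1}, \frac{1}{k}\right]\right) \geq \left(\frac{1}{k} - \frac{1}{n+1}\right)^{h_n}$ by Lemma \ref{lem: wypuklosc z hn}. The centering condition forces the center $(r + \frac{1}{k})/2$ to lie in $J_n \subseteq [x_n, 1]$, so $r \geq 2x_n - \frac{1}{k}$ and $\left|\frac{1}{k} - r\right| \leq 2\left(\frac{1}{k} - x_n\right)$. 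The inequality $x_n > \frac{1}{n+1}$ then yields $d_n\left(\left[r, \frac{1}{k}\right]\right) \geq \left(\frac{1}{2}\right)^{h_n} \to \frac{1}{2}$. The main obstacle is this edge case: without the centering hypothesis, the naive upper bound on $\left|\frac{1}{k} - r\right|$ would be too weak to reach the threshold $\frac{1}{2}$, and it is the only place where one cannot appeal cleanly to the earlier lemmas.
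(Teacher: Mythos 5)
Your proof is correct, and your overall case split (handling $r < \frac{1}{n+1}$ via the centering hypothesis, everything else without it) matches the paper's division into Part (B) and Part (A); your treatment of the edge case is essentially the paper's Part (B), phrased through the leftmost point $x_n$ of $J_n$ rather than directly through the center $c \geq \frac{1}{n+1}$. Where you genuinely diverge is the main case. The paper's Part (A) is a single direct estimate: it discards the mass of $\left[r,\frac{1}{k+l}\right]$, inflates the length to $\left|\frac{1}{k+l+1}-\frac{1}{k}\right|$, and checks that $\frac{l}{l+1}\cdot\frac{k+l+1}{k+l}\geq\frac{1}{2}$ in one application of Lemma \ref{lem: wypuklosc z hn}. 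You instead decompose at $\frac{1}{k'}$ and invoke Lemma \ref{lem: rozdzial gestosci na minimum}, Theorem \ref{k+l do k} and Corollary \ref{cor: krotkie lewe bez centrowania}, which forces an extra sub-case when the left piece is too short for the corollary; your direct estimate there (resting on $\frac{k}{(k'-k)(k'+1)}\leq 1$) is correct and in fact gives a bound tending to $1$ rather than $\frac{1}{2}$. The paper's route is shorter and self-contained; yours reuses more of the earlier machinery and makes visible that the loss down to $\frac{1}{2}$ comes only from the tail $\left[r,\frac{1}{k'}\right]$, never from the bulk $\left[\frac{1}{k'},\frac{1}{k}\right]$. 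One point worth preserving: the paper later cites Part (A) in Proposition \ref{prop: a do b z jednym w srodku} precisely because it does not assume centering; your main-case argument is likewise centering-free, so it supports that later use equally well.
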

\begin{proof}
Let $l \in \mathbb{N}$ be such a number that $ \frac{1}{k+l+1}\leq r<\frac{1}{k+l}$. 
\\Part (A). First, assume that $k+l+1 \leq n+1$. Then, by Lemma \ref{lem: wypuklosc z hn}
\[
d_n\left(\left[r,\frac{1}{k}\right ] \right) = \frac{m_n\left(\left[r,\frac{1}{k}\right]\right)}{\left|r - \frac{1}{k}\right|^{h_n}} \geq \frac{m_n\left(\left[\frac{1}{k+l},\frac{1}{k}\right]\right)}{\left|\frac{1}{k+l+1} - \frac{1}{k}\right|^{h_n}} = \frac{\sum \limits_{j = k}^{k+l-1} \left|\frac{1}{j} - \frac{1}{j+1}\right|^{h_n}}{\left|\frac{1}{k+l+1} - \frac{1}{k}\right|^{h_n}} \geq
\]
\[
\geq \frac{\left|\sum \limits_{j = k}^{k+l-1} \frac{1}{j} - \frac{1}{j+1}\right|^{h_n}}{\left|\frac{1}{k+l+1} - \frac{1}{k}\right|^{h_n}} =  \frac{\left|\frac{1}{k+l} - \frac{1}{k}\right|^{h_n}}{\left|\frac{1}{k+l+1} - \frac{1}{k}\right|^{h_n}} = \left[\frac{l}{k(k+l)} \cdot \frac{k(k+l+1)}{l+1} \right ]^{h_n} =
\]
\[
= \left [\frac{l}{l+1} \cdot \frac{k+l+1}{k+l}\right]^{h_n} \geq \left(\frac{1}{2}\right)^{h_n}
\]
Note that in Part (A) of this theorem, we are not using the assumption that the interval $[r,\frac{1}{k}]$ is centered. This assumption is used later on, in Part (B).\\
\\Part (B). Now, assume that $r< \frac{1}{n+1}$. From the fact that the interval $[r,\frac{1}{k}]$ is centered at $J_n$, we know that the point $c$ - the center of  $[r,\frac{1}{k}]$ must be located to the right of the point $\frac{1}{n+1}$. Using this observation and Lemma \ref{lem: wypuklosc z hn} we get
\[ 
 d_n\left(\left[r,\frac{1}{k}\right ] \right) = \frac{m_n\left(\left[r,\frac{1}{k}\right]\right)}{\left|r - \frac{1}{k}\right|^{h_n}} = \frac{m_n\left(\left[\frac{1}{n+1},\frac{1}{k}\right]\right)}{\left|r - \frac{1}{k}\right|^{h_n}} = \frac{\sum \limits_{j = k}^{n} \left|\frac{1}{j} - \frac{1}{j+1}\right|^{h_n}}{\left|r - \frac{1}{k}\right|^{h_n}} \geq
\]
\[
\geq \frac{\left|\frac{1}{k} - \frac{1}{n+1}\right|^{h_n}}{\left|r - \frac{1}{k}\right|^{h_n}} \geq \frac{\left|\frac{1}{k} - c \right|^{h_n}}{\left|r - \frac{1}{k}\right|^{h_n}} = \left(\frac{1}{2}\right)^{h_n}
\]
Thus we get 
\[
\liminf \limits_{n \to \infty} \inf \left \{d_n([r,\frac{1}{k}]): k\in \{1,2,\dots, n\}, r \in (0, 1/(k+1)) \right \} \geq \frac{1}{2}
\]
\end{proof}
\subsection{Density on intervals with left endpoint in set $\frac{1}{k}$, $k = 0,1, \dots$}
Now we move to the case where we analyze the intervals with left endpoint in $\{\frac{1}{k+1}\}$, for some $k \leq n$, and contained in $[\frac{1}{k+1}, \frac{1}{k}]$
\begin{theorem}\label{krotkie prawe}
\[
\begin{split}
\liminf \limits_{n \to \infty} \inf \Bigg \{d_n([\frac{1}{k+1},r]) &: k \in \{1,2, \dots, n\},
\\ & r \in (\frac{1}{k+1}, \frac{1}{k}] \text{, centered at } J_n \Bigg \} \geq \frac{1}{2}
\end{split}
\]
\end{theorem}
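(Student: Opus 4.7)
The strategy parallels the proof of Theorem \ref{krotkie lewe}. First apply Lemma \ref{lem: niezmienniczosc gestosci} to the map $g_k$, which is decreasing with $g_k(0) = \frac{1}{k}$ and $g_k(1) = \frac{1}{k+1}$. Setting $\hat{r} = g_k^{-1}(r) \in [0, 1)$ gives $g_k([\hat{r}, 1]) = [\frac{1}{k+1}, r]$, hence $d_n([\frac{1}{k+1}, r]) = d_n([\hat{r}, 1])$, and the centering at $J_n$ carries over to $[\hat{r}, 1]$ via $g_k^{-1}$. Unlike in Theorem \ref{krotkie lewe}, the reduced interval is $[\hat{r}, 1]$ rather than $[0, \hat{r}]$, so Theorem \ref{thm: zero-r} does not apply directly. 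I would then split into two cases based on which first-generation cylinder contains $\hat{r}$.

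\textbf{Case A} ($\hat{r} \in [\frac{1}{2}, 1)$). Here $[\hat{r}, 1]$ lies inside the rightmost cylinder $g_1([0, 1]) = [\frac{1}{2}, 1]$, so applying Lemma \ref{lem: niezmienniczosc gestosci} a second time with $g_1(x) = -\frac{1}{2}x + 1$ yields $d_n([\hat{r}, 1]) = d_n([0, 2(1-\hat{r})])$. Corollary \ref{col: zero - r uproszczony} then gives the desired $\liminf \geq \frac{1}{2}$, provided $2(1-\hat{r}) \geq \frac{2}{n+1}$. Here is where the centering hypothesis must be used: the center $\frac{\hat{r}+1}{2}$ lies in $J_n$, whose rightmost point is the fixed point $x^* = \frac{2n^2+n-1}{2n^2+2n-1}$ of $g_1 \circ g_n$ (computed analogously to the leftmost point $x_n$ in the proof of Theorem \ref{thm: ograniczenie z dolu}). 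A short computation shows $1 - \hat{r} \geq 2(1 - x^*) = \frac{2n}{2n^2+2n-1} > \frac{1}{n+1}$, as needed.

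\textbf{Case B} ($\hat{r} \in [0, \frac{1}{2})$). If $\hat{r} < \frac{1}{n+1}$, then $J_n \subset [\hat{r}, 1]$, giving $m_n([\hat{r}, 1]) = 1$ and $d_n \geq 1$. Otherwise choose $l \in \{2, \ldots, n\}$ with $\hat{r} \in [\frac{1}{l+1}, \frac{1}{l})$ and estimate $m_n([\hat{r}, 1]) \geq m_n([\frac{1}{l}, 1]) = \sum_{j=1}^{l-1}(\frac{1}{j} - \frac{1}{j+1})^{h_n} \geq (1 - \frac{1}{l})^{h_n}$ via Lemma \ref{lem: wypuklosc z hn}. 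Combined with $(1-\hat{r})^{h_n} \leq (\frac{l}{l+1})^{h_n}$, this gives $d_n([\hat{r}, 1]) \geq (1 - \frac{1}{l^2})^{h_n} \geq (\frac{3}{4})^{h_n}$, which tends to $\frac{3}{4} > \frac{1}{2}$.

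The main subtlety is Case A: without the centering hypothesis $\hat{r}$ could be arbitrarily close to $1$, and the rescaled interval $[0, 2(1-\hat{r})]$ would fall outside the range where Corollary \ref{col: zero - r uproszczony} is valid. Case B is more forgiving, since the bound $(1 - \frac{1}{l^2})^{h_n}$ already exceeds $\frac{1}{2}$ by a fixed positive margin for every $l \geq 2$.
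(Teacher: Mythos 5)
Your proof is correct and follows essentially the same route as the paper: conjugate by $f_k$ to reduce to $[\hat r,1]$, split at $\hat r=\tfrac12$, and in the case $\hat r\ge\tfrac12$ apply $f_1$ once more to land in the setting of Theorem \ref{thm: zero-r}/Corollary \ref{col: zero - r uproszczony}. The only departures are local and harmless: for $\hat r<\tfrac12$ the paper simply cites Theorem \ref{dlugie lewe} with $k=1$, whereas you give a self-contained estimate (which in fact yields the stronger bound $(3/4)^{h_n}$), and in the other case you verify the length hypothesis of Corollary \ref{col: zero - r uproszczony} quantitatively via the rightmost point of $J_n$, where the paper just notes that centering is preserved and invokes Theorem \ref{thm: zero-r} directly.
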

\begin{proof}
Notice that the interval $[\frac{1}{k+1},r]$ is contained in $[\frac{1}{k+1},\frac{1}{k}]$. Thus
\[
d_n([\frac{1}{k+1},r]) = d_n(f_{k}([\frac{1}{k+1},r])) = d_n([\hat{r},1]) 
\]
for $\hat{r} = f_{k}(r)$. If $|\hat{r} - 1| > \frac{1}{2}$, then invoking Theorem \ref{dlugie lewe} with $k = 1$, we get our thesis.
If $|\hat{r} - 1| \leq \frac{1}{2}$, then we can apply the map $f_1$ to the interval $[\hat{r}, 1]$. Further, notice that because interval $[\hat{r},  1]$ was centered at $J_n$, then so must be the interval $f_1([\hat{r},  1]) = [0, \hat{\hat{r}}]$. Moreover, the interval $[\hat{r},  1]$ and $[0, \hat{\hat{r}}]$ has the same density based on Lemma \ref{lem: niezmienniczosc gestosci}  and so we can apply Theorem \ref{thm: zero-r}, which ends the proof.
\end{proof}
Notice that similarly to the proof of the Theorem \ref{thm: zero-r}, the only assumption needed in the proof of the Theorem \ref{krotkie prawe} is that $\left|\left[\frac{1}{k+1},r\right] \right| > \left|  \frac{1}{k} - \frac{1}{k+1}\right| \cdot \frac{1}{2} \cdot \frac{2}{n+1}$. This is due to the fact that in the first part of the proof, where we assume that $|\hat{r} - 1| > \frac{1}{2}$, we can use the Part (A) of the Theorem \ref{dlugie lewe}, which does not assume being centered at $J_n$. As for the other case, when $|\hat{r} - 1| < \frac{1}{2}$, we can invoke Corollary \ref{col: zero - r uproszczony} to end the proof. This observation gives another corollary.
\begin{corollary}\label{cor: krotkie prawe bez centrowania}
\[
\begin{split}
\liminf \limits_{n \to \infty} \inf \Bigg \{d_n\left(\left[\frac{1}{k+1},r\right]\right) &: k \in \{1,2, \dots, n\}, r \in \left(\frac{1}{k+1}, \frac{1}{k}\right], 
\\ & \left|\left[\frac{1}{k+1},r\right] \right| > \left|  \frac{1}{k} - \frac{1}{k+1}\right| \cdot \frac{1}{2} \cdot \frac{2}{n+1} \Bigg \} \geq \frac{1}{2}
\end{split}
\]
\end{corollary}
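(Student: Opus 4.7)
The proposal is to mimic the proof of Theorem \ref{krotkie prawe} step by step, but replace each appeal to a result requiring the interval to be centered at $J_n$ with its centering-free counterpart (namely Corollary \ref{col: zero - r uproszczony} instead of Theorem \ref{thm: zero-r}, and Part (A) of Theorem \ref{dlugie lewe} which already makes no centering assumption). The only real work is to check that the length hypothesis on $[\tfrac{1}{k+1},r]$ translates, after the natural changes of variables, into exactly the length hypothesis needed by Corollary \ref{col: zero - r uproszczony}.

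Concretely, first I would apply $f_k$ to move the problem off of $[\tfrac{1}{k+1},\tfrac{1}{k}]$: setting $\hat r = f_k(r)$, Lemma \ref{lem: niezmienniczosc gestosci} gives
\[
d_n\!\left(\left[\tfrac{1}{k+1},r\right]\right) \;=\; d_n\!\left([\hat r,1]\right).
\]
Since $g_k$ has derivative of modulus $\tfrac{1}{k}-\tfrac{1}{k+1}$ and sends $1$ to $\tfrac{1}{k+1}$, the identity
\[
\left|\left[\tfrac{1}{k+1},r\right]\right| \;=\; \left(\tfrac{1}{k}-\tfrac{1}{k+1}\right)(1-\hat r)
\]
holds, and the hypothesis of the corollary becomes $1-\hat r > \tfrac{1}{n+1}$, i.e.\ the length of $[\hat r, 1]$ exceeds $\tfrac{1}{n+1}$.

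Next I would split into two cases depending on the size of $[\hat r, 1]$. If $\hat r \leq \tfrac{1}{2}$, the interval $[\hat r, 1]$ has right endpoint $1 = \tfrac{1}{1}$ and contains $[\tfrac{1}{2},1]$, so Part (A) of Theorem \ref{dlugie lewe} with $k=1$ (which does not use the centering assumption, as explicitly noted in its proof) gives $d_n([\hat r,1]) \geq (1/2)^{h_n}$. If instead $\hat r > \tfrac{1}{2}$, I apply $f_1(x) = 2(1-x)$; invariance of density under $g_1$ (Lemma \ref{lem: niezmienniczosc gestosci}) yields $d_n([\hat r, 1]) = d_n([0, 2(1-\hat r)])$, and the bound $1-\hat r > \tfrac{1}{n+1}$ gives $2(1-\hat r) > \tfrac{2}{n+1}$, so Corollary \ref{col: zero - r uproszczony} applies directly and delivers the desired $\liminf \geq \tfrac{1}{2}$.

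I do not anticipate a genuine obstacle: the argument is essentially a bookkeeping verification that the length threshold $\tfrac{1}{k}-\tfrac{1}{k+1})\cdot \tfrac{1}{n+1}$ in the domain rescales to exactly the threshold $\tfrac{2}{n+1}$ required by Corollary \ref{col: zero - r uproszczony} after composing $f_k$ with $f_1$. The one place requiring mild care is ensuring that in Case 1 one really can invoke Part (A) of Theorem \ref{dlugie lewe} without centering — but this is exactly the point emphasized in that theorem's proof, so no additional estimate is needed.
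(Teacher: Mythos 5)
Your proposal is correct and takes essentially the same route as the paper's own justification (the remark preceding the corollary): push the interval forward by $f_k$, split according to whether $\hat r$ exceeds $\frac{1}{2}$, and invoke Part (A) of Theorem \ref{dlugie lewe} in one case and Corollary \ref{col: zero - r uproszczony} (after applying $f_1$) in the other. You merely make explicit the rescaling of the length threshold under $f_k$ and $f_1$, which the paper leaves implicit.
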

\begin{theorem}\label{dlugie prawe}
\[
\begin{split}    
\liminf \limits_{n \to \infty} \inf \Bigg \{d_n\left(\left[\frac{1}{k+l+1}, r \right] \right) &: k,l \in \mathbb{N}, k+l<n+1, l>0, 
\\  & r \in (1/(k+l),1] \text{ and } 1/(k+1) < r\leq 1/k \Bigg \} \geq \frac{1}{2}
\end{split}
\]
\end{theorem}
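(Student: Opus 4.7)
The plan is to split the interval $I = \left[\tfrac{1}{k+l+1}, r\right]$ at the point $\tfrac{1}{k+1}$ into two adjacent subintervals $I_1 = \left[\tfrac{1}{k+l+1}, \tfrac{1}{k+1}\right]$ and $I_2 = \left[\tfrac{1}{k+1}, r\right]$, and analyze them separately. Since $I_1 = \left[\tfrac{1}{(k+1)+l}, \tfrac{1}{k+1}\right]$ has the form covered by Theorem \ref{k+l do k} (applied with $k$ replaced by $k+1$, noting that $l+(k+1) \leq n+1$), we obtain $d_n(I_1) \geq 1$. The interval $I_2$ requires more care, so I would split the analysis into two subcases depending on whether $I_2$ is long enough for Corollary \ref{cor: krotkie prawe bez centrowania} to apply.

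In the easy subcase, $|I_2| > \left|\tfrac{1}{k} - \tfrac{1}{k+1}\right| \cdot \tfrac{1}{n+1}$: Corollary \ref{cor: krotkie prawe bez centrowania} gives $\liminf_n d_n(I_2) \geq \tfrac{1}{2}$, and combining with $d_n(I_1) \geq 1$ via Lemma \ref{lem: rozdzial gestosci na minimum} yields $\liminf_n d_n(I) \geq \tfrac{1}{2}$. In the remaining subcase, $|I_2| \leq \tfrac{1}{k(k+1)(n+1)}$, the splitting lemma is unhelpful since we lack a direct lower bound on $d_n(I_2)$; instead I would use that the proof of Theorem \ref{k+l do k} already yields $m_n(I_1) \geq |I_1|^{h_n}$, so
\[
d_n(I) \;\geq\; \frac{m_n(I_1)}{(|I_1| + |I_2|)^{h_n}} \;=\; \left(\frac{|I_1|}{|I_1| + |I_2|}\right)^{h_n}.
\]
With $|I_1| = \tfrac{l}{(k+1)(k+l+1)}$ and the subcase hypothesis, a short calculation gives
\[
\frac{|I_2|}{|I_1|} \;\leq\; \frac{k+l+1}{k l (n+1)} \;\leq\; \frac{3}{n+1},
\]
where the last step uses $\tfrac{k+l+1}{kl} = \tfrac{1}{k} + \tfrac{1}{l} + \tfrac{1}{kl} \leq 3$ for $k, l \geq 1$. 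Since this ratio tends to zero uniformly in $(k,l)$, we get $d_n(I) \to 1$ uniformly, which is more than enough.

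The main obstacle is precisely this second subcase: when $I_2$ is too short for the centering-free corollary to apply, we cannot rely on the splitting mechanism alone. The observation that makes the argument go through is that the same smallness condition which disqualifies the corollary also forces $|I_2|$ to be negligible compared to $|I_1|$ at a rate $O(1/n)$, uniformly in the parameters $(k, l)$ with $k + l \leq n$; consequently the density of $I$ inherits the strong bound $d_n(I_1) \geq 1$ coming from Theorem \ref{k+l do k}, without ever needing a direct estimate on $d_n(I_2)$.
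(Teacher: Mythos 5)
Your proposal is correct and follows essentially the same route as the paper: the same decomposition at $\frac{1}{k+1}$, the same dichotomy on whether the right piece exceeds the threshold $\left|\frac{1}{k}-\frac{1}{k+1}\right|\cdot\frac{1}{n+1}$ needed for Corollary \ref{cor: krotkie prawe bez centrowania}, and in the short subcase the same absorption of the negligible right piece into the denominator with the bound $\frac{k+l+1}{kl}\leq 3$ (the paper reaches $m_n(I_1)\geq|I_1|^{h_n}$ by applying Lemma \ref{lem: wypuklosc z hn} in place, exactly as in the proof of Theorem \ref{k+l do k} that you cite). The only cosmetic slip is an ``$=$'' that should be ``$\geq$'' in the displayed chain for $d_n(I)$.
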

\begin{proof}
As a side note, one can notice that this case is symmetric to one in Theorem \ref{dlugie lewe}, although the method used in the proof is different.
First, let us assume that $|\frac{1}{k+1}-r|\geq \left|\frac{1}{k+1} - \frac{1}{k}\right| \cdot \frac{1}{2} \cdot \frac{2}{n+1}$. Additionally we can split the interval $\left[ \frac{1}{k+l+1}, r \right]$ into two - $[\frac{1}{k+l+1}, \frac{1}{k+1}]$ and $[\frac{1}{k+1}, r]$. Using Lemma \ref{lem: rozdzial gestosci na minimum}, we get 
\[
d_n\left(\left[\frac{1}{k+l+1}, r\right]\right) \geq \min \left \{d_n\left(\left[\frac{1}{k+l+1}, \frac{1}{k+1}\right]\right), d_n\left(\left[\frac{1}{k+1}, r \right]\right) \right\} \geq
\]
\[
\geq \min \left \{1, d_n\left(\left[\frac{1}{k+1}, r \right]\right) \right \}
\]
and using Theorem \ref{k+l do k} and Corollary \ref{cor: krotkie prawe bez centrowania} gives us the result.
Now if 
\[
\left|\frac{1}{k+1}-r \right |\leq \left|\frac{1}{k} - \frac{1}{k+1}\right| \cdot \frac{1}{2}\cdot \frac{2}{n+1}
\]
then, by Lemma \ref{lem: wypuklosc z hn}
\[
d_n\left(\left[\frac{1}{k+l+1}, r\right]\right)  \geq \frac{\sum\limits_{j = k+1}^{k+l}\left| \frac{1}{j} - \frac{1}{j+1}\right|^{h_n}}{\left| \frac{1}{k+l+1} - \frac{1}{k+1} + \left|\frac{1}{k} - \frac{1}{k+1}\right| \cdot \frac{1}{2}\cdot \frac{2}{n+1} \right|^{h_n}} \geq 
\]
\[
\geq \frac{\left|\sum\limits_{j = k+1}^{k+l} \frac{1}{j} - \frac{1}{j+1}\right|^{h_n}}{\left| \frac{1}{k+l+1} - \frac{1}{k+1}  + \left|\frac{1}{k} - \frac{1}{k+1}\right| \cdot \frac{1}{2}\cdot \frac{2}{n+1} \right|^{h_n}} =
\]
\[
= \frac{\left|\frac{1}{k+l+1} - \frac{1}{k+1}\right|^{h_n}}{\left| \frac{1}{k+l+1} - \frac{1}{k+1} + \left|\frac{1}{k+1} - \frac{1}{k}\right| \cdot \frac{1}{2}\cdot \frac{2}{n+1}\right |^{h_n}} =
\]
\[
= \frac{1}{\left| 1 + \frac{1}{n+1}\frac{\left|\frac{1}{k+1} - \frac{1}{k}\right|}{\left | \frac{1}{k+l+1} - \frac{1}{k+1}\right|}\right |^{h_n}} \geq
\]
\[
\geq \left (\frac{1}{2} \right)^{h_n}
\]
for sufficiently large n and all $k,l \in \mathbb{N}$ such that $k+l<n+1$.
\end{proof}
\subsection{Density on intervals intersecting the set $\frac{1}{k}$, $k = 0,1, \dots$}
Now we move to the final step of the proof. This step is split into three parts. Next two propositions require two auxillary lemmas, which we will formulate here and prove later on in Subsection \ref{subsec: dowody dodatkowych lematow}
\begin{customlem}{\ref{lem: jednostajnosc bledu 1}}
\[
\liminf \limits_{n\to\infty} \left( \inf\limits_{d > \frac{1}{n}} \left\{\frac{m_n\left(\left[\frac{1}{n+1},d \right]\right)}{\left|d-\frac{1}{n+1} \right |^{h_n}} \right \} \right) \geq 1 
\]
\end{customlem}
It is worth noting this theorem cannot be replaced by Theorem \ref{dlugie prawe}. This is due to the fact, that in the Theorem \ref{dlugie prawe} we have the estimate of the lower limit of the densities by $\frac{1}{2}$, whereas here we need a stronger estimate. 
\begin{customlem}{\ref{lem: jednostajnosc bledu 2}}
\[
\liminf \limits_{n\to\infty} \left( \inf\limits_{d < \frac{1}{2}} \left\{\frac{m_n\left(\left[d, 1 \right]\right)}{\left|1 - d \right |^{h_n}} \right \} \right) \geq 1 
\]
\end{customlem}
It is worth noting this theorem cannot be replaced by Theorem \ref{dlugie lewe}. In Theorem \ref{dlugie lewe} we have the estimate of the lower limit of the densities by $\frac{1}{2}$ and the intervls are required to be centered, whereas here we need a stronger estimate on all intervals.
\\
Now, let us start with the first proposition.
\begin{prop}\label{prop: a do b z jednym w srodku}
\[
\begin{split}  
\liminf \limits_{n \to \infty} \inf \Bigg \{d_n([a,b])&: 0 < a < \frac{1}{l+1} < \frac{1}{l} < b \leq 1, 
\\ &l \in \{2,3, \dots, n \} ,\ [a,b] \text{ centered at } J_n \Bigg \} \geq \frac{1}{2}
\end{split}
\]
\end{prop}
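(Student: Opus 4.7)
The plan is to split the argument according to whether $a < \frac{1}{n+1}$ or $a \geq \frac{1}{n+1}$, since $J_n \subset [\frac{1}{n+1},1]$ and the centering hypothesis behaves differently in the two regimes.

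First I would handle $a < \frac{1}{n+1}$ directly, without any splitting. Because $J_n \subset [\frac{1}{n+1},1]$, we have $m_n([a,b]) = m_n([\frac{1}{n+1},b])$, while the centering condition $(a+b)/2 \in J_n$ forces $(a+b)/2 \geq \frac{1}{n+1}$, and hence $b - a \leq 2(b - \frac{1}{n+1})$. Combining these gives $d_n([a,b]) \geq (1/2)^{h_n}\, d_n([\frac{1}{n+1}, b])$. Since $b > 1/l \geq 1/n$ (using $l \leq n$), Lemma \ref{lem: jednostajnosc bledu 1} provides a uniform lower bound $d_n([\frac{1}{n+1},b]) \to 1$, so the product has $\liminf$ at least $1/2$.

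Next I would handle $a \geq \frac{1}{n+1}$ by splitting $[a,b] = [a, 1/l] \cup [1/l, b]$ and applying Lemma \ref{lem: rozdzial gestosci na minimum}. The left piece contains the cylinder $[\frac{1}{l+1},\frac{1}{l}]$, and since $a \geq \frac{1}{n+1}$, Part (A) of Theorem \ref{dlugie lewe} (with $k = l$) applies \emph{without} requiring the sub-interval to be centered, yielding $d_n([a,1/l]) \geq (1/2)^{h_n}$. For the right piece $[1/l,b]$ I would distinguish two sub-cases: if $b > \frac{1}{l-1}$, apply Theorem \ref{dlugie prawe} after choosing the indices so that the left endpoint equals $1/l$; if $\frac{1}{l} < b \leq \frac{1}{l-1}$ with $|b-1/l| > \frac{1}{l(l-1)(n+1)}$, apply Corollary \ref{cor: krotkie prawe bez centrowania} with $k = l-1$. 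In either case $d_n([1/l,b]) \geq (1/2)^{h_n}$ in the limit, and Lemma \ref{lem: rozdzial gestosci na minimum} delivers the required $\liminf \geq 1/2$.

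The hard part will be the residual sub-case $|b-1/l| \leq \frac{1}{l(l-1)(n+1)}$, where $b$ is so close to $1/l$ that Corollary \ref{cor: krotkie prawe bez centrowania} no longer applies and the min-decomposition is useless because $d_n([1/l,b])$ can be arbitrarily small. My approach there is to abandon the splitting and bound $d_n([a,b])$ directly by $m_n([a,b]) \geq m_n([a,1/l])$ together with $b-a = (1/l-a) + (b-1/l)$. Using $1/l - a \geq \frac{1}{l(l+1)}$ from the hypothesis $a < \frac{1}{l+1}$, the ratio $(b-1/l)/(1/l-a)$ is at most $\frac{l+1}{(l-1)(n+1)} \leq \frac{3}{n+1}$, uniformly in $l \geq 2$. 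Hence $d_n([a,b]) \geq d_n([a,1/l])\,(1 + 3/(n+1))^{-h_n}$, and Part (A) of Theorem \ref{dlugie lewe} once again controls $d_n([a,1/l])$. Putting the regimes together, $\liminf_n d_n([a,b]) \geq 1/2$ in every case, proving the proposition.
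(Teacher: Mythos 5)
Your proof is correct, and in the main regime it takes a genuinely leaner route than the paper's. The first regime $a<\frac{1}{n+1}$ coincides with the paper's opening case $k\ge n+1$: both use the centering condition to get $b-a\le 2\left(b-\frac{1}{n+1}\right)$ and then invoke Lemma \ref{lem: jednostajnosc bledu 1} (your observation that $b>\frac1l\ge\frac1n$ is exactly what makes that lemma applicable). For $a\ge\frac{1}{n+1}$ the paper splits $[a,b]$ into three pieces $[a,\frac1k]\cup[\frac1k,\frac1l]\cup[\frac1l,b]$ and runs a four-way case analysis according to whether each of the two extreme partial pieces is ``long'' or ``short'', with the both-short case needing a separate computation via Lemma \ref{lem: wypuklosc z hn}. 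You instead split only once, at $\frac1l$, and observe that Part (A) of Theorem \ref{dlugie lewe} --- which, as the paper itself notes, uses no centering and applies as soon as $a\ge\frac{1}{n+1}$ forces the index bound $k+l+1\le n+1$ --- controls the entire left piece $[a,\frac1l]$ no matter how close $a$ is to $\frac{1}{l+1}$; this eliminates the long/short dichotomy on the left entirely. Only the right piece needs a dichotomy, and your residual sub-case $|b-\frac1l|\le\frac{1}{l(l-1)(n+1)}$ is dispatched by the perturbation bound $\frac{b-1/l}{1/l-a}\le\frac{l+1}{(l-1)(n+1)}\le\frac{3}{n+1}$, which plays the role of the paper's Case 4 but is shorter and uniform in $l\ge 2$. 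The trade-off is that the paper's long/short machinery (in particular the two Observations about where the center must lie) is built to be reused in Proposition \ref{prop: a do b z punktem w srodku}, where no full first-generation cylinder is available and your shortcut via Part (A) of Theorem \ref{dlugie lewe} would not apply; your argument buys brevity here precisely by exploiting the presence of the cylinder $[\frac{1}{l+1},\frac1l]$ inside $[a,b]$.
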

\begin{proof}
Let $k$ and $l$ be unique integer such that $[a,b] = [a,\frac{1}{k}]\cup[\frac{1}{k},\frac{1}{l}]\cup[\frac{1}{l},b]$ for some integer $l \leq k$ and $\frac{1}{k+1} \leq a < \frac{1}{k} \leq \frac{1}{l} \leq b < \frac{1}{l-1} $. First, assume that $k \geq n+1$. Note that this implies that $a \not \in J_n$. Let $c = \frac{a+b}{2}$ be the center of the interval $[a,b]$. Because the interval $[a,b]$ is centered at $J_n$, $c \geq \frac{1}{n+1}$, and thus $\frac{|\frac{1}{n+1}-a|}{|b-a|} < \frac{1}{2}$, which directly implies that $\frac{|b-\frac{1}{n+1}|}{|b-a|} \geq \frac{1}{2}$. Now, fix $\varepsilon > 0$. Using Lemma \ref{lem: jednostajnosc bledu 1} with $d=b$, fix $n$ large enough such that 
\[
\frac{m_n\left(\left[\frac{1}{n+1},b\right]\right)}{\left|b-\frac{1}{n+1}\right|^{h_n}} \geq (1 - \frac{\varepsilon}{2})
\]
for all $b \geq \frac{1}{n}$. Hence
\[
d_n\left(\left[a,b\right]\right) = \frac{m_n\left(\left[a,b\right]\right)}{\left|b-a\right|^{h_n}} = \frac{m_n\left(\left[\frac{1}{n+1},b\right]\right)}{\left|b-a\right|^{h_n}} = \frac{m_n\left(\left[\frac{1}{n+1},b\right]\right)}{\left|b-\frac{1}{n+1}\right|^{h_n}} \cdot \frac{\left|b-\frac{1}{n+1}\right|^{h_n}}{\left|b-a\right|^{h_n}} \geq 
\]
\[
\geq (1-\frac{\varepsilon}{2}) \cdot \left(\frac{1}{2}\right)^{h_n} \geq \frac{1}{2} - \varepsilon
\]
where last inequality comes from the fact that $h_n \to 1$ when $n \to \infty$ for all $n$ large enough.
\par From now on, we will assume that $k \leq n $. Let $J_1 = [a,\frac{1}{k}]$, $J_2 = [\frac{1}{k},\frac{1}{l}]$ and $J_3 = [\frac{1}{l},b]$. 
\par We say that $J_1$ is long when
\begin{equation}\label{eq: j1 long}
|J_1| = \left |a -\frac{1}{k}\right| \geq \left | \frac{1}{k} - \frac{1}{k+1} \right | \cdot \frac{2}{n+1}
\end{equation}
otherwise we say that $J_1$ is short. We say that $J_3$ is long when
\begin{equation}\label{eq: j3 long}
|J_3| = \left |\frac{1}{l} - b \right| \geq \left | \frac{1}{l-1} - \frac{1}{l} \right | \cdot \frac{1}{2} \cdot \frac{2}{n+1}
\end{equation}
otherwise we say it is short.
We will split this proof into four parts - based on the length of the intervals $J_1$ and $J_3$. Those cases are as follows
\begin{enumerate}
\item Both intervals $J_1$ and $J_3$ are long
\label{case 1}
\item  $J_1$ is long and $J_3$ is short
 \label{case 2}
\item $J_1$ is short and $J_3$ is long
\label{case 3}
\item  Both $J_1$ and $J_3$ are short
\label{case 4}
\end{enumerate} 
First, assume both $J_1$ and $J_3$ are long.
Then, by using Lemma \ref{lem: rozdzial gestosci na minimum}, we notice that
\[
d_n\left(\left[a,b\right]\right) = d_n\left(J_1\cup J_2 \cup J_3\right) \geq \min \left\{ d_n\left(J_1\right), d_n\left(J_2\right), d_n\left(J_3\right) \right\}   
\]
And thus using Theorem \ref{k+l do k}, Corollary \ref{cor: krotkie lewe bez centrowania} and \ref{cor: krotkie prawe bez centrowania} we get our thesis.
\par Now, moving to the second case, we assume that $J_1$ is long and $J_3$ is short.
Now, we split our interval into two $J_1$ and $J_2 \cup J_3$, out of which the first one is in form of the intervals from Corollary \ref{cor: krotkie lewe bez centrowania} and the other one is in form of ones from Theorem \ref{dlugie prawe}. Utilizing both of those theorems, and Lemma \ref{lem: rozdzial gestosci na minimum} we get
\[
d_n([a,b]) \geq \min \left \{ d_n(J_1), d_n(J_2 \cup J_3) \right \} \geq \frac{1}{2}
\]
In the third case, when $J_1$ is short and $J_3$ is long, we can split our interval into two $J_1 \cup J_2$ and $J_3$. Then the $J_1\cup J_2$ fulfills the assumptions of the Theorem \ref{dlugie lewe} Part (A), whereas $J_3$ fulfills the assumptions of the Corollary \ref{cor: krotkie prawe bez centrowania}. This together with Lemma \ref{lem: rozdzial gestosci na minimum} gives us
\[
d_n([a,b]) \geq \min \left \{ d_n(J_1 \cup J_2), d_n(J_3)\right \} \geq \frac{1}{2}
\]
In Case \ref{case 4}, assume that both of the intervals $J_1$ and $J_3$ are short i.e. do not satisfy \eqref{eq: j1 long} and \eqref{eq: j3 long}. By Lemma \ref{lem: wypuklosc z hn}
\[
d_n([a,b]) \geq \frac{\sum\limits_{j = l}^{k-1}\left | \frac{1}{j} - \frac{1}{j+1}  \right |^{h_n}}{\left | \left | \frac{1}{k} - \frac{1}{k+1} \right | \cdot \frac{2}{n+1} + \sum\limits_{j = l}^{k-1} \left |\frac{1}{j} - \frac{1}{j+1} \right|+ \left | \frac{1}{l-1} - \frac{1}{l} \right | \cdot \frac{1}{2} \cdot \frac{2}{n+1} \right  |^{h_n}} \geq
\]
\[
\geq \frac{\left | \sum\limits_{j = l}^{k-1} \frac{1}{j} - \frac{1}{j+1}  \right |^{h_n}}{\left | \left | \frac{1}{k} - \frac{1}{k+1} \right | \cdot \frac{2}{n+1} + \sum\limits_{j = l}^{k-1} \left |\frac{1}{j} - \frac{1}{j+1} \right| + \left | \frac{1}{l-1} - \frac{1}{l} \right | \cdot \frac{1}{2} \cdot \frac{2}{n+1} \right  |^{h_n}} \geq 
\]
\[
\geq \frac{\left | \frac{1}{l} - \frac{1}{k}  \right |^{h_n}}{\left | \frac{1}{l}-\frac{1}{k} + \frac{2}{n+1} \cdot \left ( \frac{1}{k(k+1)} + \frac{1}{2} \cdot \frac{1}{l(l-1)}\right )\right  |^{h_n}} \geq  \frac{1}{\left|1 + \frac{2}{n+1} \cdot \frac{\left ( \frac{1}{k(k+1)} + \frac{1}{2} \cdot \frac{1}{l(l-1)}\right )}{|\frac{1}{l}-\frac{1}{k}|} \right|^{h_n}} \geq \frac{1}{2}
\]
for sufficiently large n and all $l \in \{2,3 \dots, n\} $ and $k \in \{ l+1, l+2, \dots, n+1\}$.
\\
This ends the proof of the Proposition \ref{prop: a do b z jednym w srodku}.
\end{proof}
Now, we focus on the case, where there is no whole interval of the first generation in the interval $[a,b]$. 
\begin{prop}\label{prop: a do b z punktem w srodku}
\[
\begin{split}  
\liminf \limits_{n \to \infty} \inf \Bigg \{d_n([a,b]):& \frac{1}{k+1}< a < \frac{1}{k} < b <\frac{1}{k-1} \leq 1, 
\\ &k \in \{2,3, \dots, n + 1 \} ,\ [a,b] \text{ centered at } J_n \Bigg \} \geq \frac{1}{2}
\end{split}
\]
\end{prop}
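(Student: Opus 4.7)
The plan is to split $[a,b]$ at the unique point $\tfrac{1}{k}$ lying in its interior, study the two pieces $[a,\tfrac{1}{k}]$ and $[\tfrac{1}{k},b]$ separately, and run the ``long/short'' dichotomy from Proposition \ref{prop: a do b z jednym w srodku}. Write $p = \tfrac{1}{k}-a$, $q = b-\tfrac{1}{k}$, $\rho_L = \tfrac{1}{k(k+1)}$ and $\rho_R = \tfrac{1}{(k-1)k}$; call the left piece \emph{long} when $p > \tfrac{2\rho_L}{n+1}$ (hypothesis of Corollary \ref{cor: krotkie lewe bez centrowania}) and the right piece \emph{long} when $q > \tfrac{\rho_R}{n+1}$ (Corollary \ref{cor: krotkie prawe bez centrowania}). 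When both pieces are long, Lemma \ref{lem: rozdzial gestosci na minimum} combined with the two corollaries immediately gives $d_n([a,b]) \geq \tfrac{1}{2} - \varepsilon$ for large $n$.

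The essential observation is that ``both short'' is impossible once $n$ is large, and that in each of the mixed cases the center is forced onto the long side with a quantitative bound on $|p-q|$. Since $\tfrac{1}{k}$ is the image of $0$ only by $g_k$ and of $1$ only by $g_{k-1}$, and neither $0$ nor $1$ lies in $J_n$, the point $\tfrac{1}{k}$ itself is not in $J_n$. Hence $c = (a+b)/2 \in J_n$ lies in either $g_k(J_n) \subset [\tfrac{1}{k+1}, \tfrac{1}{k} - \rho_L x_n]$ or in $g_{k-1}(J_n) \subset [\tfrac{1}{k} + \rho_R x_n/2, \tfrac{1}{k-1}]$, where $x_n = \min J_n = \tfrac{2n}{2n^2+2n-1}$ as in Theorem \ref{thm: ograniczenie z dolu}. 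In the first case $p - q = 2(\tfrac{1}{k} - c) \geq 2\rho_L x_n$, in the second $q - p \geq \rho_R x_n$. Using $x_n > \tfrac{1}{n+1}$, neither inequality is compatible with the ``both short'' bounds $p - q < \tfrac{2\rho_L}{n+1}$ and $q - p < \tfrac{\rho_R}{n+1}$; the same bookkeeping forces the center onto the long side in each mixed case, producing $p - q \geq 2\rho_L x_n$ or $q - p \geq \rho_R x_n$.

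In ``left long, right short'' the conformal identity $[a,\tfrac{1}{k}] = g_k([0,\hat a])$, with $\hat a = p/\rho_L \geq 2 x_n > \tfrac{1}{n}$ and the vanishing of $m_n$ on $[0,\tfrac{1}{n+1})$, lets Lemma \ref{lem: jednostajnosc bledu 1} produce $m_n([a,\tfrac{1}{k}]) \geq (1-\varepsilon)(p - \tfrac{\rho_L}{n+1})^{h_n}$. The inequality $p - \tfrac{\rho_L}{n+1} \geq (p+q)/2$ is equivalent to $p - q \geq \tfrac{2\rho_L}{n+1}$ and hence holds; therefore
\[
d_n([a,b]) \;\geq\; (1-\varepsilon)\left(\frac{p - \rho_L/(n+1)}{p+q}\right)^{h_n} \;\geq\; (1-\varepsilon)\left(\tfrac{1}{2}\right)^{h_n} \;\longrightarrow\; \tfrac{1}{2}.
\]
The symmetric case ``left short, right long'' uses $[\tfrac{1}{k},b] = g_{k-1}([\tilde b,1])$; if $\tilde b < \tfrac{1}{2}$ one applies Lemma \ref{lem: jednostajnosc bledu 2} directly, and if $\tilde b \geq \tfrac{1}{2}$ one iterates once more via $[\tilde b,1] = g_1([0,2v])$ with $v = q/\rho_R$ and $2v \geq 2 x_n > \tfrac{1}{n}$, so that Lemma \ref{lem: jednostajnosc bledu 1} applies. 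In either branch one obtains uniformly $m_n([\tfrac{1}{k},b]) \geq (1-\varepsilon)(q - \tfrac{\rho_R}{2(n+1)})^{h_n}$, and $q - p \geq \rho_R x_n > \tfrac{\rho_R}{n+1}$ yields $d_n([a,b]) \geq (1-\varepsilon)(\tfrac{1}{2})^{h_n} \to \tfrac{1}{2}$. The boundary case $k = n+1$ is absorbed in this symmetric branch: the left piece $[a,\tfrac{1}{n+1}] \subset (\tfrac{1}{n+2},\tfrac{1}{n+1})$ is disjoint from $J_n$ and hence has zero $m_n$-measure, while centering forces $c \geq x_n > \tfrac{1}{n+1}$, automatically making the right piece long.

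The main obstacle I expect is the verification that ``both short'' cannot occur; this rests on the delicate numerical comparison $x_n > \tfrac{1}{n+1}$ matching exactly the thresholds $\tfrac{2\rho_L}{n+1}$ and $\tfrac{\rho_R}{n+1}$ appearing in Corollaries \ref{cor: krotkie lewe bez centrowania} and \ref{cor: krotkie prawe bez centrowania}. Once the mixed cases are reduced to a quantitative lower bound on $|p - q|$, Lemmas \ref{lem: jednostajnosc bledu 1} and \ref{lem: jednostajnosc bledu 2} plug in and the centering slack in $|p - q|$ is precisely what absorbs the factor $\tfrac{1}{2}$ lost by discarding the measure of the short piece.
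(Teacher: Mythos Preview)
Your argument is correct and follows essentially the same route as the paper's proof: split at $\tfrac{1}{k}$, run the four-case long/short dichotomy, use Lemma~\ref{lem: rozdzial gestosci na minimum} with Corollaries~\ref{cor: krotkie lewe bez centrowania} and~\ref{cor: krotkie prawe bez centrowania} when both pieces are long, rule out ``both short'' by showing the center of a $J_n$-centered interval cannot land in the gap around $\tfrac{1}{k}$, and in each mixed case push the center onto the long side and invoke Lemma~\ref{lem: jednostajnosc bledu 1} or Lemma~\ref{lem: jednostajnosc bledu 2} (with one further iteration through $g_1$ when $\tilde b\geq\tfrac12$); the boundary case $k=n+1$ is absorbed into the ``right long'' branch in both proofs. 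The only cosmetic difference is that you anchor the gap endpoints at $g_k(x_n)$ and $g_{k-1}\circ g_1(x_n)$ (using $x_n=\min J_n>\tfrac{1}{n+1}$), whereas the paper uses the slightly cruder markers $g_k(\tfrac{1}{n+1})$ and $g_{k-1}\circ g_1(\tfrac{1}{n+1})$, and you phrase the centering constraint as a lower bound on $|p-q|$ rather than a direct comparison of $c$ with the gap endpoint; both bookkeeping choices lead to the same inequality $d_n([a,b])\geq(1-\varepsilon)(\tfrac12)^{h_n}$.
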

\begin{proof}
Assume first that $k\leq n$. Case when $k = n+1$ will be analysed at the end of the proof. Let $J_1 = \left[a,\frac{1}{k}\right]$ and $J_2 = \left[\frac{1}{k},b\right]$. As in the proof of the Proposition \ref{prop: a do b z jednym w srodku}, we say that $J_1$ is long when
\begin{equation}\label{eq: j1 long 2}
|J_1| = \left |a -\frac{1}{k}\right| \geq \left | \frac{1}{k} - \frac{1}{k+1} \right | \cdot \frac{2}{n+1}
\end{equation}
and we call it short otherwise. We say that $J_2$ is long when
\begin{equation}\label{eq: j2 long 2}
|J_2| =  \left |\frac{1}{k} - b \right| \geq \left | \frac{1}{k-1} - \frac{1}{k} \right | \cdot \frac{1}{2} \cdot \frac{2}{n+1}
\end{equation}
It is worth noting, that 
\[
\frac{1}{k} - \left | \frac{1}{k} - \frac{1}{k+1} \right | \cdot \frac{1}{n+1} = g_k\left(\frac{1}{n+1}\right)
\] and 
\[
\frac{1}{k} + \left | \frac{1}{k-1} - \frac{1}{k} \right |\cdot \frac{1}{2} \cdot \frac{1}{n+1} = g_{k-1} \circ g_1 \left ( \frac{1}{n+1} \right )
\] The immediate consequence of this fact is the following observation. 
\begin{observation}\label{obs: j1 short then centered in j2}
Let $c$ be the center of the interval $[a,b]$. Assume that $[a,b]$ is centered at $J_n$. Moreover, assume that $J_1$ is short. Then $c$ must be to the right of the $\frac{1}{k}$. 
\end{observation}
\begin{proof}
Note by $c_1$ the center of the interval $J_1$. Because
\[
\frac{1}{k} - \left | \frac{1}{k} - \frac{1}{k+1} \right | \cdot \frac{1}{n+1} = g_k(\frac{1}{n+1})
\]
(see Figure \ref{fig:visualisation of jn}) and $J_1$ is short (i.e. does not fulfill \eqref{eq: j1 long 2}), then the center of the interval $J_1$ must be to the right of $g_k(\frac{1}{n+1})$. This means that $g_k(\frac{1}{n+1}) < c_1 $. But the center of the interval $[a,b]$ lies to the right of $c_1$ i.e. $c_1<c$, thus $g_k(\frac{1}{n+1}) < c$. Now, notice that $J_n\cap [g_k(\frac{1}{n+1}), \frac{1}{k}] = \emptyset$. This, together with $g_k(\frac{1}{n+1}) < c$ implies that $\frac{1}{k} < c$.
\end{proof} 
Now, we will formulate a symmetric observation regarding $J_2$. 
\begin{observation}\label{obs: j2 short then centered in j1}
Let $c$ be the center of the interval $[a,b]$. Assume that $[a,b]$ is centered at $J_n$. Moreover, assume that $J_2$ is short. Then $c$ must be to the left of the $\frac{1}{k}$. 
\end{observation}
\begin{proof}
Note by $c_2$ the center of the interval $J_2$. Because $\frac{1}{k} + \left | \frac{1}{k-1} - \frac{1}{k} \right | \cdot \frac{1}{2} \cdot \frac{1}{n+1} = g_{k-1} \circ g_1 \left ( \frac{1}{n+1} \right )$ (see Figure \ref{fig:visualisation of jn}) and $J_2$ is short (i.e. does not fulfill \eqref{eq: j2 long 2}), then the center of the interval $J_2$ must be to the left of $g_{k-1} \circ g_1 \left ( \frac{1}{n+1} \right )$. This means that $c_2<g_{k-1} \circ g_1 \left ( \frac{1}{n+1} \right ) $. But the center of the interval $[a,b]$ lies to the left of $c_2$ i.e. $c<c_2$, thus $c < g_{k-1} \circ g_1 \left ( \frac{1}{n+1} \right ) $. Now, notice that $J_n\cap \left[\frac{1}{k}, g_{k-1} \circ g_1 \left ( \frac{1}{n+1} \right) \right] = \emptyset$. This, together with $c < g_{k-1} \circ g_1 \left ( \frac{1}{n+1} \right ) $ implies that $c < \frac{1}{k} $.
\end{proof}
Now, we will split the proof of the Proposition \ref{prop: a do b z punktem w srodku} into four cases based on the length of intervals $J_1$ and $J_2$. 
\\Case 1. Assume that $J_1$ and $J_2$ are long.
Then utilising Lemma \ref{lem: rozdzial gestosci na minimum} yields
\[
d_n([a,b])\geq \min \left \{ d_n(J_1), d_n(J_2) \right \} \geq \frac{1}{2}
\]
based on Theorem \ref{krotkie lewe} and Corollary \ref{cor: krotkie prawe bez centrowania}. 

\begin{figure}

\tikzset{every picture/.style={line width=0.75pt}} 

\begin{tikzpicture}[x=0.75pt,y=0.75pt,yscale=-1,xscale=1]

\draw [color={rgb, 255:red, 208; green, 2; blue, 27 }  ,draw opacity=1 ]   (77,82) -- (259,82) ;
\draw    (334,70.5) -- (334,91.5) ;
\draw [color={rgb, 255:red, 208; green, 2; blue, 27 }  ,draw opacity=1 ]   (77,82) .. controls (102,47.5) and (153,55.5) .. (168,82) ;
\draw [color={rgb, 255:red, 208; green, 2; blue, 27 }  ,draw opacity=1 ]   (168,82) .. controls (184,63.5) and (195,67.5) .. (206,82.5) ;
\draw [color={rgb, 255:red, 208; green, 2; blue, 27 }  ,draw opacity=1 ]   (206,82.5) .. controls (222,64) and (228,67.5) .. (239,82.5) ;
\draw [color={rgb, 255:red, 208; green, 2; blue, 27 }  ,draw opacity=1 ]   (239,82.5) .. controls (249,71.5) and (253,72.5) .. (259,82) ;
\draw [color={rgb, 255:red, 74; green, 144; blue, 226 }  ,draw opacity=1 ]   (425,82) -- (516,82) ;
\draw [color={rgb, 255:red, 74; green, 144; blue, 226 }  ,draw opacity=1 ]   (334,82) .. controls (359,47.5) and (410,55.5) .. (425,82) ;
\draw [color={rgb, 255:red, 74; green, 144; blue, 226 }  ,draw opacity=1 ]   (425,82) .. controls (441,63.5) and (452,67.5) .. (463,82.5) ;
\draw [color={rgb, 255:red, 74; green, 144; blue, 226 }  ,draw opacity=1 ]   (463,82.5) .. controls (479,64) and (485,67.5) .. (496,82.5) ;
\draw [color={rgb, 255:red, 74; green, 144; blue, 226 }  ,draw opacity=1 ]   (496,82.5) .. controls (506,71.5) and (510,72.5) .. (516,82) ;
\draw [color={rgb, 255:red, 65; green, 117; blue, 5 }  ,draw opacity=1 ]   (426,82.31) -- (369,82.31) ;
\draw [color={rgb, 255:red, 65; green, 117; blue, 5 }  ,draw opacity=1 ]   (426,82.31) .. controls (418.17,69.08) and (402.2,72.15) .. (397.5,82.31) ;
\draw [color={rgb, 255:red, 65; green, 117; blue, 5 }  ,draw opacity=1 ]   (397.5,82.31) .. controls (392.49,75.21) and (389.04,76.75) .. (385.6,82.5) ;
\draw [color={rgb, 255:red, 65; green, 117; blue, 5 }  ,draw opacity=1 ]   (385.6,82.5) .. controls (380.59,75.41) and (378.71,76.75) .. (375.26,82.5) ;
\draw [color={rgb, 255:red, 65; green, 117; blue, 5 }  ,draw opacity=1 ]   (375.26,82.5) .. controls (372.13,78.28) and (370.88,78.67) .. (369,82.31) ;
\draw [color={rgb, 255:red, 65; green, 117; blue, 5 }  ,draw opacity=1 ]   (369,70.5) -- (369,91.5) ;
\draw [color={rgb, 255:red, 74; green, 144; blue, 226 }  ,draw opacity=1 ]   (516,70.5) -- (516,91.5) ;
\draw [color={rgb, 255:red, 208; green, 2; blue, 27 }  ,draw opacity=1 ]   (259,70.5) -- (259,91.5) ;
\draw    (77,70.5) -- (77,91.5) ;

\draw (335.5,105) node   [align=left] {\begin{minipage}[lt]{12.92pt}\setlength\topsep{0pt}
$\frac{1}{k}$
\end{minipage}};
\draw (261,105) node   [color={rgb, 255:red, 208; green, 2; blue, 27 }  ,opacity=1 ]  [align=left] {\begin{minipage}[lt]{16.32pt}\setlength\topsep{0pt}
$g_k(\frac{1}{n+1})$
\end{minipage}};
\draw (82,105) node   [align=left] {\begin{minipage}[lt]{16.32pt}\setlength\topsep{0pt}
$\frac{1}{k+1}$
\end{minipage}};
\draw (517,105) node   [align=left] {\begin{minipage}[lt]{16.32pt}\setlength\topsep{0pt}
$\frac{1}{k-1}$
\end{minipage}};
\draw (372,107) node   [color={rgb, 255:red, 65; green, 117; blue, 5 }  ,opacity=1 ] [align=left] {\begin{minipage}[lt]{16.32pt}\setlength\topsep{0pt}
$g_{k-1} \circ $
\end{minipage}};
\draw (410,105) node    [color={rgb, 255:red, 65; green, 117; blue, 5 }  ,opacity=1 ] [align=left] {\begin{minipage}[lt]{16.32pt}\setlength\topsep{0pt}
$ g_1(\frac{1}{n+1})$
\end{minipage}};

\end{tikzpicture}
\caption{Visualisation of $J_n$.}
\label{fig:visualisation of jn}
\end{figure}
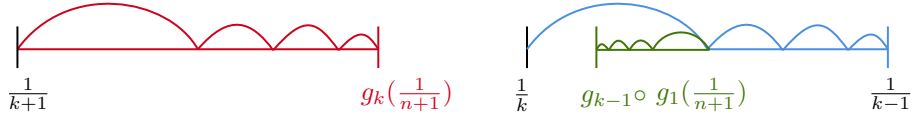
Case 2. Now, assume that $J_1$ is long and $J_2$ is short (ie. $J_1$ fulfills \eqref{eq: j1 long 2} and $J_2$ does not fulfill \eqref{eq: j2 long 2}). 
Fix $\varepsilon>0$. Let $c = center([a,b]) = (a+b)/2$ be the center of the interval $[a,b]$. $[a,b]$ is centered ad $J_n$. By the Observation \ref{obs: j2 short then centered in j1} we have $\frac{1}{k}< c$. Moreover, $c$ cannot be in the interval $[g_k(\frac{1}{n+1}), \frac{1}{k}]$, because the interval $[g_k(\frac{1}{n+1}),\frac{1}{k}]$ does not intersect $J_n$. This implies that $|[a,c]| \leq |[a,g_k(\frac{1}{n+1})]|$. Let $I_1 = [a,c]$ and $I_2 = [c,b]$. Because $c$ is the middle point of $[a,b]$, $|I_1| = |I_2|$.  From Lemma \ref{lem: jednostajnosc bledu 1} we can find $n$ to be large enough for the following to hold
\[
\frac{m_n\left(\left[\frac{1}{n+1},d\right]\right)}{\left|d-\frac{1}{n+1}\right|^{h_n}} \geq (1 - \frac{\varepsilon}{2})
\]
for all $d\geq \frac{1}{n}$. Now, applying Lemma \ref{lem: niezmienniczosc gestosci} to the interval $[a, g_k(\frac{1}{n+1})]$, we get
\begin{equation}\label{eq: jedn bledu dla j1 dlugiego}
\frac{m_n([a,g_k(\frac{1}{n+1})])}{|g_k(\frac{1}{n+1})- a|^{h_n}} \geq (1 - \frac{\varepsilon}{2})
\end{equation}
Using this fact and $|[a,c]| \leq |[a,g_k(\frac{1}{n+1})]|$, we get
\begin{equation}\label{eq: lewy dlugi prawy krotki }
d_n([a,b]) \geq \frac{m_n([a,\frac{1}{k}])}{|b-a|^{h_n}} \geq \frac{m_n([a,g_k(\frac{1}{n+1})])}{|b-a|^{h_n}} =
\end{equation}
\[
= \frac{\frac{m_n([a,g_k(\frac{1}{n+1})])}{|g_k(\frac{1}{n+1})- a|^{h_n}} \cdot |g_k(\frac{1}{n+1})- a|^{h_n}}{|b-a|^{h_n}} \geq \frac{|g_k(\frac{1}{n+1})- a|^{h_n} \cdot (1 - \frac{\varepsilon}{2})}{|b-a|^{h_n}} \geq
\]
\[
\geq \frac{|c - a|^{h_n} \cdot (1 - \frac{\varepsilon}{2})}{|b-a|^{h_n}}  = \frac{|I_1|^{h_n} \cdot (1 - \frac{\varepsilon}{2})}{|I_1+I_2|^{h_n}}  = \left( \frac{1}{2}\right)^{h_n} \cdot \left(1 - \frac{\varepsilon}{2} \right) \geq \frac{1}{2} - \varepsilon
\]
for sufficiently large n.
\par Case 3. For the second to last case, assume that $J_1$ is short (i.e. does not fulfill \eqref{eq: j1 long 2}) and $J_2$ is long (i.e. fulfills \eqref{eq: j2 long 2}). Fix $\varepsilon > 0$. Let $c$ be the center of the interval $[a,b]$. From Observation \ref{obs: j1 short then centered in j2} we know that $\frac{1}{k}<c$. Moreover, the interval $[\frac{1}{k},g_{k-1}\circ g_1(\frac{1}{n+1})]$ does not intersect $J_n$. Additionally, $[a,b]$ is centered at $J_n$ and thus $ g_{k-1}\circ g_1(\frac{1}{n+1}) \leq c$. Let $I_1 = [a,c]$ and $I_2 = [c,b]$. Because $c$ is the middle point of $[a,b]$, $|I_1| = |I_2|$. If $ \left |\frac{1}{k} - b \right| \geq \left | \frac{1}{k-1} - \frac{1}{k} \right | \cdot \frac{1}{2}$, then using Lemma \ref{lem: jednostajnosc bledu 2} we can find $n$ large enough such that 
\[
\frac{m_n\left(\left[d, 1 \right]\right)}{\left|1 - d \right |^{h_n}} \geq 1 - \frac{\varepsilon}{2}
\]
for all $d \leq \frac{1}{2}$. Since  $\left|\frac{1}{k}-b\right |\geq \left | \frac{1}{k-1}- \frac{1}{k} \right |\cdot \frac{1}{2} $, we can apply Lemma \ref{lem: niezmienniczosc gestosci} and obtain
\[
\frac{m_n\left(\left[g_{k-1} \circ g_1(\frac{1}{n+1}), b \right]\right)}{\left|b - g_{k-1} \circ g_1(\frac{1}{n+1}) \right |^{h_n}} \geq 1 - \frac{\varepsilon}{2}
\]
Applying this fact together with observation that $|[c,b]| \leq |[g_{k-1} \circ g_1 \left(\frac{1}{n+1} \right), b]|$ we get
\begin{equation}\label{eq: lewy krotki prawy dlugi}    
d_n([a,b]) \geq \frac{m_n([\frac{1}{k}, b])}{\left|a-b \right|^{h_n} } = \frac{m_n([g_{k-1}\circ g_1(\frac{1}{n+1}),b])}{|b-a|^{h_n}} = 
\end{equation}
\[
= \frac{m_n\left(\left[g_{k-1} \circ g_1(\frac{1}{n+1}), b \right]\right)}{\left|b - g_{k-1} \circ g_1(\frac{1}{n+1}) \right |^{h_n}} \cdot \frac{\left|b - g_{k-1} \circ g_1(\frac{1}{n+1}) \right|^{h_n}}{|b-a|^{h_n}} \geq \frac{(1 - \frac{\varepsilon}{2}) \cdot \left|b -c \right|^{h_n}}{|b-a|^{h_n}} \geq
\]
\[ 
\geq \frac{\left|I_2\right|^{h_n}\cdot\left(1 - \frac{\varepsilon}{2}\right)}{|I_1+I_2|^{h_n}} = \left(\frac{1}{2}\right)^{h_n} \cdot \left(1 - \frac{\varepsilon}{2} \right) \geq (\frac{1}{2} - \varepsilon)
\]
for sufficiently large $n$. 
\par Now if $\left |\frac{1}{k} - b \right| < \left | \frac{1}{k-1} - \frac{1}{k} \right | \cdot \frac{1}{2}$, then from Lemma \ref{lem: jednostajnosc bledu 1} find $n$ large enough to get 
\[
\frac{m_n\left(\left[\frac{1}{n+1}, d \right]\right)}{\left|d - \frac{1}{n+1} \right |^{h_n}} \geq 1 - \frac{\varepsilon}{2}
\]
for all $d \geq \frac{1}{n}$. Applying Lemma \ref{lem: niezmienniczosc gestosci} twice results in the following
\[
\frac{m_n\left(\left[g_{k-1} \circ g_1(\frac{1}{n+1}), b \right]\right)}{\left|b - g_{k-1} \circ g_1(\frac{1}{n+1}) \right |^{h_n}} \geq 1 - \frac{\varepsilon}{2}
\]
Using this result yields
\[
d_n([a,b]) \geq \frac{m_n([\frac{1}{k}, b])}{\left|a-b \right|^{h_n} } = \frac{m_n([g_{k-1}\circ g_1(\frac{1}{n+1}),b])}{|b-a|^{h_n}} \geq 
\]
\[
\geq \frac{m_n\left(\left[g_{k-1} \circ g_1(\frac{1}{n+1}), b \right]\right)}{\left|b - g_{k-1} \circ g_1(\frac{1}{n+1}) \right |^{h_n}} \cdot \frac{\left|b - g_{k-1} \circ g_1(\frac{1}{n+1}) \right|^{h_n}}{|b-a|^{h_n}} \geq \frac{(1 - \frac{\varepsilon}{2}) \cdot \left|b -c \right|^{h_n}}{|b-a|^{h_n}} \geq
\]
\[ 
\geq \frac{\left|I_2\right|^{h_n}\cdot\left(1 - \frac{\varepsilon}{2}\right)}{|I_1+I_2|^{h_n}} = \left(\frac{1}{2}\right)^{h_n} \cdot \left(1 - \frac{\varepsilon}{2} \right) \geq (\frac{1}{2} - \varepsilon)
\]
for sufficiently large $n$, which ends the proof for this case.
\\Case 4. For the last case, assume that both $J_1$ and $J_2$ are short.
We assumed that the interval $[a,b]$ must be centered at $J_n$. As before, denote by $c$ the center of the interval $[a,b]$. Because $J_1$ is short, we can invoke Observation \ref{obs: j1 short then centered in j2} and deduce that $c > \frac{1}{k}$. The interval $J_2$ is short as well and invoking Observation \ref{obs: j2 short then centered in j1} yields $c < \frac{1}{k}$. This implies that $[a,b]$ cannot be centered at $J_n$. 
\par The only thing left is to consider case when $k = n+1$. This case is even simpler, because the only possible case is Case 3. When $k = n+1$ then $\frac{1}{k} = \frac{1}{n+1}$ and thus $[a, \frac{1}{k}] \cap J_n = \emptyset$. Because the interval $[a,b]$ is centered at $J_n$, this implies that the center of the interval is in the interval $[\frac{1}{n+1},b]$. The proof for this situation is exactly the proof for the Case 3 of this proposition. 
\\
This ends the proof of Proposition \ref{prop: a do b z punktem w srodku}
\end{proof}
\subsection{Finalizing the proof}
As the last part of our proof, we put previously obtained results to get the following.
\begin{theorem}\label{thm: a do b}
\[
\liminf \limits_{n \to \infty} \inf \left \{d_n([a,b]): 0 < a < b \leq 1, \ [a,b] \text{ centered at } J_n \right \} \geq \frac{1}{2}
\]
\end{theorem}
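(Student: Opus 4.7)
The plan is to reduce the general interval $[a,b]$ centered at $J_n$ to an interval that either starts at $0$ or meets the set $\{1/k\}_{k\geq 1}$, at which point one of the previously established cases applies.

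First I would observe that if $[a,b]$ is strictly contained in some open cylinder $(\tfrac{1}{k+1},\tfrac{1}{k})$, then $k\leq n$ (otherwise the interval would miss $J_n$, contradicting centering), and Lemma \ref{lem: rozciaganie krotkich przedzialow} produces an interval $\hat B$ with $d_n(\hat B)=d_n([a,b])$ meeting $\{1/j:j\leq n\}$. The pullback iterates the maps $f_k$, each of which is linear; since linearity sends midpoints to midpoints and $f_k(J_n\cap[\tfrac{1}{k+1},\tfrac{1}{k}])\subset J_n$, the interval $\hat B$ again has its midpoint in $J_n$. Each pullback step multiplies the diameter by at least $2$, so the procedure terminates in finitely many steps. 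If $[a,b]$ already meets $\{1/k\}_{k\geq 1}$ or has $a=0$, I would simply take $\hat B=[a,b]$.

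After the reduction, $\hat B=[\hat a,\hat b]$ is centered at $J_n$ with the same $n$-th density as $[a,b]$, and exactly one of the following cases holds. If $\hat a=0$, Theorem \ref{thm: zero-r} applies. If $\hat b=1/j$ for some $j$, then Theorem \ref{krotkie lewe} handles $\hat a\in[\tfrac{1}{j+1},\tfrac{1}{j})$ and Theorem \ref{dlugie lewe} handles $\hat a<\tfrac{1}{j+1}$. Symmetrically, if $\hat a=1/j$, use Theorem \ref{krotkie prawe} or \ref{dlugie prawe} depending on whether $\hat b\leq\tfrac{1}{j-1}$. If some $1/j$ lies strictly inside $\hat B$ and $\hat B$ contains an entire first-generation cylinder $[\tfrac{1}{l+1},\tfrac{1}{l}]$ with $l\geq 2$, apply Proposition \ref{prop: a do b z jednym w srodku}; otherwise $\hat B\subset(\tfrac{1}{j+1},\tfrac{1}{j-1})$ and Proposition \ref{prop: a do b z punktem w srodku} applies. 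The case $l=1$, which would correspond to $\hat B\supseteq[\tfrac12,1]$, cannot occur after a nontrivial pullback (which forces $\hat b<1$), and in the no-pullback branch it collapses to the endpoint case $\hat b=1=1/1$ handled above.

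In every branch $\liminf_n d_n(\hat B)\geq\tfrac12$, and since $d_n([a,b])=d_n(\hat B)$ the desired inequality follows. The main obstacle is not any new estimate but the combinatorial bookkeeping: verifying that the iterated pullback preserves centering at $J_n$ and that every configuration of $\hat B$ (endpoint at a reciprocal, interior crossing of a reciprocal, or containment of a full cylinder) is matched to exactly one of the earlier results, including the degenerate sub-cases at $j=1$ and $j=2$ where the outer cylinders $[\tfrac12,1]$ and $[\tfrac13,\tfrac12]$ behave slightly differently.
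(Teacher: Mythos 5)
Your proposal is correct and follows essentially the same route as the paper: apply Lemma \ref{lem: rozciaganie krotkich przedzialow} to blow up an interval trapped in a cylinder until it meets $\{1/k\}$, then dispatch to Theorems \ref{thm: zero-r}--\ref{dlugie prawe} and Propositions \ref{prop: a do b z jednym w srodku}, \ref{prop: a do b z punktem w srodku} according to where the reciprocal points sit. Your explicit verification that the linear pullback preserves centering at $J_n$, and your bookkeeping of the degenerate $j=1$ case, are details the paper leaves implicit but are correctly handled.
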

\begin{proof}
If there does not exists $k \in \{1, 2, \dots, n\}$ such that $\frac{1}{k} \in (a,b)$, then we can apply Lemma \ref{lem: rozciaganie krotkich przedzialow} to the interval $[a,b]$. This yields interval $[\hat{a}, \hat{b}]$, with the same density, such that at least one $\frac{1}{k}$ belongs to $[\hat{a}, \hat{b}]$, $k \in \{1, 2, \dots, n\}$. Then by the results of Theorems \ref{thm: zero-r}, \ref{k+l do k}, \ref{krotkie lewe}, \ref{dlugie lewe}, \ref{krotkie prawe}, \ref{dlugie prawe} and Propositions \ref{prop: a do b z jednym w srodku}, \ref{prop: a do b z punktem w srodku} we get the thesis.
\end{proof}
This directly imples, using the definition of packing measure from \ref{defi: packing measure} and Theorem \ref{thm: tw o gestosci miary pakujacej}, that the upper limit of the packing measure is at most one, meaning
\begin{theorem}\label{thm: upper limit}
Let $S_n$ be IFS defined in \ref{IFS Sn}. Then
\[
\limsup \limits_{n \to \infty} \mathcal{P}_{h_n}(J_n) \leq 2 
\]
where $J_n$ is the limit set of the IFS $S_n$ and $P_h$ denotes packing measure in packing dimension $h$.
\end{theorem}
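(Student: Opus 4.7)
The plan is to deduce Theorem \ref{thm: upper limit} almost immediately from the two key ingredients already established: the density characterization of packing measure in Theorem \ref{thm: tw o gestosci miary pakujacej} and the uniform lower density bound in Theorem \ref{thm: a do b}. There is essentially no new geometric content to develop; the work is in recognizing that the previous subsections collectively cover \emph{every} closed interval centered at $J_n$ and contained in $[0,1]$.

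First I would recall, from Theorem \ref{thm: tw o gestosci miary pakujacej}, the identity
\[
\mathcal{P}_{h_n}(J_n) = \left(\inf_{\substack{F \text{ centered at } J_n \\ F \subseteq [0,1]}} d_n(F)\right)^{-1},
\]
which reduces the statement $\limsup_n \mathcal{P}_{h_n}(J_n) \leq 2$ to the lower bound
\[
\liminf_{n\to\infty} \inf_{\substack{F \text{ centered at } J_n \\ F \subseteq [0,1]}} d_n(F) \geq \frac{1}{2}.
\]
Any $F$ in this infimum is a closed interval $F = [a,b] \subseteq [0,1]$. If $a = 0$, then $F$ has the form $[0,r]$ handled directly by Theorem \ref{thm: zero-r}, which already yields $\liminf d_n([0,r]) \geq 1/2$. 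If $a > 0$, I would apply Theorem \ref{thm: a do b}, which gives the same lower bound $1/2$ for every centered interval with $0 < a < b \leq 1$.

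Combining these two cases, the infimum inside the liminf is at least $1/2$ in the limit, and inverting via the density formula yields
\[
\limsup_{n\to\infty} \mathcal{P}_{h_n}(J_n) \leq \left(\frac{1}{2}\right)^{-1} = 2.
\]
Since all the technical work (splitting into intervals $[0,r]$, $[1/(k+l),1/k]$, intervals with one endpoint at $1/k$, and intervals straddling one or more points $1/k$) has been completed in Sections \ref{sec: from above}, no further case analysis is required here. The only minor subtlety is the inversion step: one has to observe that $\liminf_n \alpha_n \geq 1/2$ implies $\limsup_n \alpha_n^{-1} \leq 2$, which is immediate since $x \mapsto 1/x$ is continuous and decreasing on $(0,\infty)$.

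There is no genuine obstacle in this final step; the entire difficulty of the upper bound has already been absorbed into the proof of Theorem \ref{thm: a do b}, whose substance lies in the propositions of the preceding subsection. The only thing to be careful about is that Theorem \ref{thm: a do b} is phrased with the restriction $a > 0$, so one must explicitly note that intervals of the form $[0,r]$ are covered separately by Theorem \ref{thm: zero-r}; otherwise the infimum in Theorem \ref{thm: tw o gestosci miary pakujacej} would not be fully controlled. Once this is observed, the conclusion follows in a single line.
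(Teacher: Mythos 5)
Your proposal is correct and follows essentially the same route as the paper, which likewise obtains Theorem \ref{thm: upper limit} as an immediate consequence of the density characterization in Theorem \ref{thm: tw o gestosci miary pakujacej} combined with the uniform lower bound of Theorem \ref{thm: a do b}. Your explicit remark that intervals of the form $[0,r]$ must be covered separately by Theorem \ref{thm: zero-r} is a small point of care that the paper leaves implicit.
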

This result along with Theorem \ref{thm: ograniczenie z dolu} proves our main result.
\begin{theorem}\label{thm: granica}
Let $S_n$ be IFS defined in \ref{IFS Sn}. Then
\[
\lim \limits_{n \to \infty} \mathcal{P}_{h_n}(J_n) = 2
\]
where $J_n$ is the limit set of the iterated function system $S_n$ and $P_h$ denotes packing measure in packing dimension $h$.
\end{theorem}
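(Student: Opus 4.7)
The plan is to squeeze $\mathcal{P}_{h_n}(J_n)$ between matching bounds using Theorem \ref{thm: tw o gestosci miary pakujacej}, which identifies $\mathcal{P}_{h_n}(J_n) = (\inf_F d_n(F))^{-1}$ where the infimum ranges over intervals $F \subseteq [0,1]$ centered at $J_n$. The problem then reduces to showing that $\lim_n \inf_F d_n(F) = 1/2$. The lower bound on $\mathcal{P}_{h_n}(J_n)$ (equivalently, an upper bound $\leq 1/2$ on some density) is obtained by exhibiting a single explicit family of intervals, while the upper bound requires a uniform density estimate across every interval centered at $J_n$.

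For the easier half, I would produce an interval of density close to $1/2$ by centering at the leftmost point $x_n$ of $J_n$, the fixed point of $g_n \circ g_1$. Taking $I_n = [a_n, 1/n]$ centered at $x_n$, the measure $m_n(I_n)$ equals $m_n([1/(n+1),1/n]) = (1/n - 1/(n+1))^{h_n}$, and a short algebraic computation gives $d_n(I_n) = (1/2)^{h_n} \cdot ((2n^2+2n-1)/(2n^2+n-1))^{h_n} \to 1/2$, yielding $\liminf_n \mathcal{P}_{h_n}(J_n) \geq 2$ immediately from Theorem \ref{thm: tw o gestosci miary pakujacej}.

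The hard half is the uniform lower bound $\liminf_n \inf_F d_n(F) \geq 1/2$. I would attack this by stratifying intervals according to the coarse partition $\{[1/(k+1), 1/k]\}_{k=1}^n$. The building blocks are: (i) intervals $[0,r]$, handled by convexity (Lemma \ref{lem: wypuklosc z hn}) with a direct optimization over the integer $k$ with $r \in (1/(k+1), 1/k]$; (ii) intervals $[1/(k+l), 1/k]$ covering a full union of first-generation cylinders, handled by a telescoping sum giving density at least $1$; (iii) intervals $[r, 1/k]$ or $[1/(k+1), r]$ contained in a single cylinder, obtained from (i) by applying the conformality Lemma \ref{lem: niezmienniczosc gestosci} through $g_k$ or $f_k$. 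For a general interval $[a,b]$ containing at least one point $1/k$, I would decompose it into a left tail, a middle block of whole cylinders, and a right tail, and combine the three via Lemma \ref{lem: rozdzial gestosci na minimum}. Intervals containing no point $1/k$ can be expanded via Lemma \ref{lem: rozciaganie krotkich przedzialow} to a same-density interval that does, so they reduce to the previous case.

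The main obstacle is the case where one of the tails is very short relative to the whole interval. Then the crude $(1/2)^{h_n}$ bound for that tail is insufficient, because it gets multiplied against the dominant middle part and the geometric loss does not cancel. This is exactly where the two auxiliary Lemmas \ref{lem: jednostajnosc bledu 1} and \ref{lem: jednostajnosc bledu 2} enter: they upgrade the $1/2$ bound to a constant asymptotically equal to $1$ for intervals anchored at the extreme endpoints $1/(n+1)$ and $1$. Pulled back by the conformality lemma, these sharp estimates apply to any short tail lying in a first-generation cylinder, so combining them with the $1/2$ bound on the long half of $[a,b]$ (which is nearly a full half of the whole interval, by the observations ruling out centers in the gaps near $1/k$) recovers the threshold $1/2$. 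Putting together the lower bound from Theorem \ref{thm: ograniczenie z dolu} and the upper bound from Theorem \ref{thm: upper limit} yields the desired limit.
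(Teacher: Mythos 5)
Your proposal follows the paper's own proof essentially step for step: the same reduction via Theorem \ref{thm: tw o gestosci miary pakujacej}, the same explicit interval centered at the fixed point of $g_n\circ g_1$ for the lower bound (Theorem \ref{thm: ograniczenie z dolu}), and the same stratified case analysis with the auxiliary Lemmas \ref{lem: jednostajnosc bledu 1} and \ref{lem: jednostajnosc bledu 2} handling the short-tail cases for the upper bound (Theorem \ref{thm: upper limit}). The outline is correct and matches the paper's argument.
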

\subsection{Proofs of the auxiliary lemmas}\label{subsec: dowody dodatkowych lematow}
Now, we will provide proof for auxiliary Lemma \ref{lem: jednostajnosc bledu 1} and \ref{lem: jednostajnosc bledu 2}.
\begin{lemma}\label{lem: jednostajnosc bledu 1}
\[
\liminf \limits_{n\to\infty} \left( \inf\limits_{d > \frac{1}{n}} \left\{\frac{m_n\left(\left[\frac{1}{n+1},d \right]\right)}{\left|d-\frac{1}{n+1} \right |^{h_n}} \right \} \right) \geq 1 
\]
\end{lemma}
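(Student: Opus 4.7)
My plan is to bound the density from below by decomposing $[1/(n+1),d]$ along the cylinder structure and then controlling the residual tail by reducing to Lemma \ref{lem: jednostajnosc bledu 2}. Fix $\epsilon>0$ and, given $d>1/n$, choose the unique $K\in\{1,\ldots,n-1\}$ with $d\in(1/(K+1),1/K]$. Decompose
\[
\left[\tfrac{1}{n+1},d\right] = \left[\tfrac{1}{n+1},\tfrac{1}{K+1}\right] \cup \left[\tfrac{1}{K+1},d\right]
\]
and write $\alpha=1/(K+1)-1/(n+1)$, $\beta=d-1/(K+1)$, and $y=f_K(d)$, so that $\beta=\rho_K(1-y)$ with $\rho_K=1/(K(K+1))$. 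The head is the disjoint union of the complete cylinders $C_{K+1},\ldots,C_n$, so its mass equals $\sum_{j=K+1}^{n}\rho_j^{h_n}\geq\alpha^{h_n}$ by Lemma \ref{lem: wypuklosc z hn}; the tail has mass $\rho_K^{h_n}m_n([y,1])$ by Lemma \ref{lem: niezmienniczosc gestosci}.

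The key reduction is that once I establish $\rho_K^{h_n}m_n([y,1])\geq(1-\epsilon)\beta^{h_n}$, a second application of Lemma \ref{lem: wypuklosc z hn} yields
\[
\frac{\alpha^{h_n}+(1-\epsilon)\beta^{h_n}}{(\alpha+\beta)^{h_n}} \;\geq\; (1-\epsilon)\frac{\alpha^{h_n}+\beta^{h_n}}{(\alpha+\beta)^{h_n}} \;\geq\; 1-\epsilon,
\]
which is the desired bound. To obtain the tail estimate I split on the size of $\beta$. If $\beta\leq\epsilon\alpha$ I drop the tail entirely and use $(\alpha/(\alpha+\beta))^{h_n}\geq(1+\epsilon)^{-h_n}\geq 1-2\epsilon$ for $n$ large. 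If instead $\beta>\epsilon\alpha$, then $1-y=\beta/\rho_K>\epsilon K(n-K)/(n+1)\geq\epsilon(n-1)/(n+1)$, so $y$ is bounded away from $1$ uniformly in $K$. When $y\leq 1/2$ I apply Lemma \ref{lem: jednostajnosc bledu 2} directly to obtain $m_n([y,1])\geq(1-\epsilon)(1-y)^{h_n}$. When $y\in(1/2,1)$ I use the scaling $[y,1]=g_1([0,2-2y])$ combined with Lemma \ref{lem: niezmienniczosc gestosci} to rewrite $d_n([y,1])=d_n([0,z])$ for $z=2-2y$ bounded below, and then invoke the uniform approximation $m_n([0,z])\to z$ on compact subsets of $(0,1]$ (Polya's theorem for monotone functions, using that $m_n$ converges weakly to Lebesgue measure on $[0,1]$ as $h_n\to 1$) together with $z^{h_n}\to z$ uniformly on the same compacts.

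The principal obstacle is the sub-case $y\in(1/2,1)$ of the second regime: Lemma \ref{lem: jednostajnosc bledu 2} is not directly available there, and one needs a quantitative uniform-approximation input for $F_n(z)=m_n([0,z])$ on compact subsets of $(0,1]$. This input is not spelled out earlier in the paper; it follows from weak convergence of the conformal measures $m_n$ to Lebesgue measure combined with Polya's monotone-convergence theorem for distribution functions. An alternative would be to iterate the cylinder decomposition inside $C_K$ until the residual falls into a regime already controlled, but the uniform bookkeeping of the iteration depth in $d$ and $n$ is delicate.
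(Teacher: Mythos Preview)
Your route diverges from the paper's and leaves the gap you yourself identify unclosed. The paper's proof is entirely self-contained and does \emph{not} recurse through Lemma~\ref{lem: jednostajnosc bledu 2} or any convergence-of-measures input. Instead it first reduces to $d\in J_n$ (replacing $d$ by the right endpoint of its gap if necessary), then writes out both $m_n([1/(n{+}1),d])$ and $|d-1/(n{+}1)|$ as explicit infinite series indexed by the symbolic address $(q_j)_{j\geq 1}$ of $d$. The two series differ only by tail contributions of the form $\prod_{j}\bigl(\tfrac{1}{q_j}-\tfrac{1}{q_j+1}\bigr)\cdot\tfrac{1}{n+1}$, which are dominated by a geometric series $\sum_i 2^{-i}$ because each factor $\tfrac{1}{q_j}-\tfrac{1}{q_j+1}\le\tfrac12$. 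This yields directly
\[
\frac{\left|d-\tfrac{1}{n+1}\right|^{h_n}}{m_n\!\left(\left[\tfrac{1}{n+1},d\right]\right)}\;\le\;\Bigl(1+\tfrac{2}{n}\Bigr)^{h_n},
\]
uniformly in $d$, with no case split. Lemma~\ref{lem: jednostajnosc bledu 2} is then proved by the identical method; neither lemma depends on the other.

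Your sub-case $y\in(\tfrac12,1)$ is a genuine gap as the proof stands. What you need there is $d_n([0,z])\ge 1-\epsilon$ uniformly for $z$ bounded away from $0$, but nothing earlier in the paper provides a density bound near $1$ on such intervals (Theorem~\ref{thm: zero-r} only gives $\liminf\ge\tfrac12$). The weak-convergence argument $m_n\rightharpoonup\mathrm{Leb}$ plus Polya is correct in spirit and would deliver the claim, but it is nowhere established in the paper, so you would have to supply it. Alternatively, note that after applying $g_1$ you are back to an interval $[0,z]=[0,\tfrac{1}{n+1}]\cup[\tfrac{1}{n+1},z]$ of the \emph{original} form; iterating your head-plus-tail decomposition along the symbolic address and summing the resulting contributions is exactly the computation the paper performs in one pass, and avoids the external input altogether.
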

\begin{proof}
Fix some $n > 0$. Take arbitrary $d>\frac{1}{n}$. We claim that in order to estimate the ratio
\begin{equation}\label{eq: star}
\frac{m_n\left(\left[\frac{1}{n+1},d \right]\right)}{\left|d-\frac{1}{n+1} \right |^{h_n}}
\end{equation}
from below, one can assume that $d \in J_n$, $d > \frac{1}{n}$. Indeed, if $d \not \in J_n$ (i.e. $d$ is in some "gap" of the Cantor set $J_n$), then we can replace $d$ by $\hat{d} := \inf \{x \in J_n: x > d \}$ (the right endpoint of the "gap"). Then $m_n([\frac{1}{n+1},d]) = m_n([\frac{1}{n+1}, \hat{d}])$ and $\left | d - \frac{1}{n+1}\right| \leq \left | \hat{d} - \frac{1}{n+1}\right| $  and, clearly, $\hat{d} \geq d > \frac{1}{n}$. So
\[
\frac{m_n\left(\left[\frac{1}{n+1},d \right]\right)}{\left|d-\frac{1}{n+1} \right |^{h_n}} \geq \frac{m_n\left(\left[\frac{1}{n+1},\hat{d} \right]\right)}{\left|\hat{d}-\frac{1}{n+1} \right |^{h_n}}
\]
So, from now on we assume that $d \in J_n$, $d > \frac{1}{n}$. 
\par Since $d \in J_n$, there exists a unique sequence of integers $(q_j)_{j=1}^\infty$, $q_j \leq n$ such that 
\[
\{ d \} = \bigcap \limits_{k=1}^\infty g_{q_1} \circ \dots \circ g_{q_k}([0,1])
\]
The measure $m_n([\frac{1}{n+1},d])$ can be expressed as the sum of the measures of the cylinder sets (i.e. intervals from the collection $\mathcal{F}^n_l$ located to the left of $d$). Summing up first the measures of the cylinder set of the first generation (see Figure \ref{fig: counted intervals}), we obtain 
\[
\sum \limits_{k = q_1+1}^n \left(\frac{1}{k} - \frac{1}{k+1} \right)^{h_n}
\]

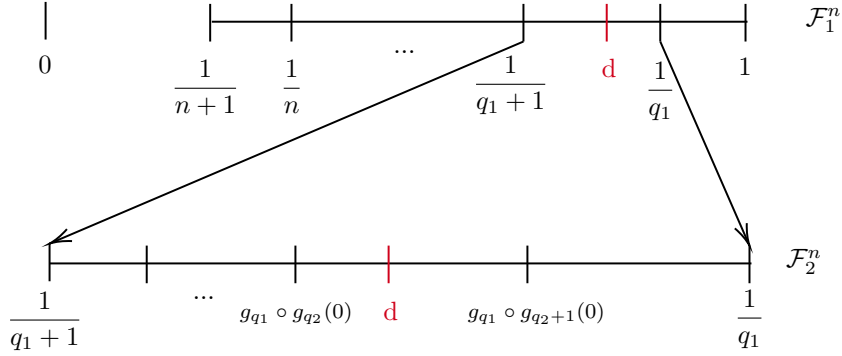
\begin{figure}
\tikzset{every picture/.style={line width=0.75pt}} 

\begin{tikzpicture}[x=0.75pt,y=0.75pt,yscale=-1,xscale=1]

\draw    (242.83,26) -- (513,26) ;
\draw    (159,35) -- (159,16) ;
\draw    (512,36) -- (512,17) ;
\draw    (469,36) -- (469,17) ;
\draw    (400,36) -- (400,17) ;
\draw [color={rgb, 255:red, 208; green, 2; blue, 27 }  ,draw opacity=1 ]   (442,36) -- (442,17) ;
\draw    (283,36) -- (283,17) ;
\draw    (242,36) -- (242,17) ;
\draw    (400,36) -- (162.84,137.21) ;
\draw [shift={(161,138)}, rotate = 336.89] [color={rgb, 255:red, 0; green, 0; blue, 0 }  ][line width=0.75]    (10.93,-3.29) .. controls (6.95,-1.4) and (3.31,-0.3) .. (0,0) .. controls (3.31,0.3) and (6.95,1.4) .. (10.93,3.29)   ;
\draw    (161,148) -- (515,148) ;
\draw    (161,157) -- (161,138) ;
\draw    (514,158) -- (514,139) ;
\draw    (402,158) -- (402,139) ;
\draw [color={rgb, 255:red, 208; green, 2; blue, 27 }  ,draw opacity=1 ]   (332,158) -- (332,139) ;
\draw    (285,158) -- (285,139) ;
\draw    (210,158) -- (210,139) ;
\draw    (469,36) -- (513.2,137.17) ;
\draw [shift={(514,139)}, rotate = 246.4] [color={rgb, 255:red, 0; green, 0; blue, 0 }  ][line width=0.75]    (10.93,-3.29) .. controls (6.95,-1.4) and (3.31,-0.3) .. (0,0) .. controls (3.31,0.3) and (6.95,1.4) .. (10.93,3.29)   ;

\draw (154,42) node [anchor=north west][inner sep=0.75pt]   [align=left] {0};
\draw (507,43) node [anchor=north west][inner sep=0.75pt]   [align=left] {1};
\draw (438,43) node [anchor=north west][inner sep=0.75pt]  [color={rgb, 255:red, 208; green, 2; blue, 27 }  ,opacity=1 ] [align=left] {d};
\draw (333,39) node [anchor=north west][inner sep=0.75pt]   [align=left] {...};
\draw (221,44) node [anchor=north west][inner sep=0.75pt]   [align=left] {$\displaystyle \frac{1}{n+1}$};
\draw (328,166) node [anchor=north west][inner sep=0.75pt]  [color={rgb, 255:red, 208; green, 2; blue, 27 }  ,opacity=1 ] [align=left] {d};
\draw (231.78,163) node [anchor=north west][inner sep=0.75pt]   [align=left] {...};
\draw (255.46,166) node [anchor=north west][inner sep=0.75pt]  [font=\footnotesize] [align=left] {$\displaystyle g_{q_{1}} \circ g_{q_{2}}(0)$};
\draw (276,44) node [anchor=north west][inner sep=0.75pt]   [align=left] {$\displaystyle \frac{1}{n}$};
\draw (373,42) node [anchor=north west][inner sep=0.75pt]   [align=left] {$\displaystyle \frac{1}{q_{1}+1}$};
\draw (460,45) node [anchor=north west][inner sep=0.75pt]   [align=left] {$\displaystyle \frac{1}{q_{1}}$};
\draw (541,17.4) node [anchor=north west][inner sep=0.75pt]    {$\mathcal{F}_{1}^{n}$};
\draw (138,161) node [anchor=north west][inner sep=0.75pt]   [align=left] {$\displaystyle \frac{1}{q_{1}+1}$};
\draw (505,160) node [anchor=north west][inner sep=0.75pt]   [align=left] {$\displaystyle \frac{1}{q_{1}}$};
\draw (531,139.4) node [anchor=north west][inner sep=0.75pt]    {$\mathcal{F}_{2}^{n}$};
\draw (370.46,166) node [anchor=north west][inner sep=0.75pt]  [font=\footnotesize] [align=left] {$\displaystyle g_{q_{1}} \circ g_{q_{2} +1} (0)$};

\end{tikzpicture}
\caption{Length and measure of the intervals to the left of $d$.}\label{fig: counted intervals}
\end{figure}

Now, looking at the cylinder set containing $d$, (i.e. the interval $[\frac{1}{q_1+1}, \frac{1}{q_1}]$) we see that the cylinder sets of the second generations (i.e. the elements of the collection $\mathcal{F}^n_2$) contained in $[\frac{1}{q_1+1}, \frac{1}{q_1}]$ and located to the left of $d$ have length 
\[
\left( \frac{1}{q_1} -\frac{1}{q_1+1} \right) \cdot \left(\frac{1}{k} - \frac{1}{k+1} \right) 
\] $k = 1, \dots, q_2-1$, and measure 
\[
\left( \frac{1}{q_1} -\frac{1}{q_1+1} \right)^{h_n} \cdot \left(\frac{1}{k} - \frac{1}{k+1} \right)^{h_n}
\]
Proceeding by induction, we easily conclude that the measure $m_n\left(\left[\frac{1}{n+1},d \right]\right)$ can be expressed in the following form
\begin{equation}\label{eq: rownanie na mn}
m_n\left(\left[\frac{1}{n+1},d \right]\right) = \sum\limits_{k = q_1 + 1}^{n}  \left( \frac{1}{k} - \frac{1}{k+1} \right)^{h_n} + \left( \frac{1}{q_1} - \frac{1}{q_1 + 1} \right)^{h_n} \cdot \sum \limits_{k = 1}^{q_2 - 1} \left( \frac{1}{k} - \frac{1}{k+1} \right)^{h_n} + 
\end{equation}
\[
+ \prod\limits_{i = 1}^{2} (\frac{1}{q_i}-\frac{1}{q_i+1})^{h_n} \cdot \sum\limits_{k = q_3 + 1}^{n} \left( \frac{1}{k} - \frac{1}{k+1} \right)^{h_n} + \prod\limits_{i = 1}^{3} (\frac{1}{q_i}-\frac{1}{q_i+1})^{h_n} \cdot \sum\limits_{k = 1}^{q_4 - 1} \left( \frac{1}{k} - \frac{1}{k+1} \right)^{h_n} + \dots 
\]
Analogously, the value $|d - \frac{1}{n+1}|^{h_n}$ can be expressed as follows
\[
\left|d-\frac{1}{n+1} \right |^{h_n} =\Bigg [\sum\limits_{k = q_1 + 1}^{n}  \left( \frac{1}{k} - \frac{1}{k+1} \right) + \left( \frac{1}{q_1} - \frac{1}{q_1 + 1} \right)\cdot \sum \limits_{k = 1}^{q_2 - 1} \left( \frac{1}{k} - \frac{1}{k+1} \right) + 
\]
\[
+ \prod\limits_{i = 1}^{2} (\frac{1}{q_i}-\frac{1}{q_i+1})\cdot \sum\limits_{k = q_3 + 1}^{\infty} \left( \frac{1}{k} - \frac{1}{k+1} \right) + \prod\limits_{i = 1}^{3} \left(\frac{1}{q_i}-\frac{1}{q_i+1} \right)\cdot \sum\limits_{k = 1}^{q_4 - 1} \left( \frac{1}{k} - \frac{1}{k+1} \right) + \dots \Bigg]^{h_n}
\]
Now, clearly each summand
\[
\prod\limits_{i = 1}^{2k}\left(\frac{1}{q_i} - \frac{1}{q_i +1} \right) \cdot \sum\limits_{k = q_{2k+1}+1}^\infty \left(\frac{1}{k} - \frac{1}{k+1}\right)
\]
can be divided into two sums
\[
\prod\limits_{i = 1}^{2k}\left(\frac{1}{q_i} - \frac{1}{q_i +1} \right) \cdot \sum\limits_{k = q_{2k+1}+1}^n \left(\frac{1}{k} - \frac{1}{k+1}\right) + \prod\limits_{i = 1}^{2k}\left(\frac{1}{q_i} - \frac{1}{q_i +1} \right) \cdot \sum\limits_{k = n+1}^\infty \left(\frac{1}{k} - \frac{1}{k+1}\right)
\]
Note that the first sum corresponds to the sum appearing in the expression for $m_n([\frac{1}{n+1}, d])$. Grouping expressions that occur also in the formula for $m_n\left(\left[\frac{1}{n+1},d \right]\right) $ yields the following expression for $\left|d-\frac{1}{n+1} \right |^{h_n}$
\begin{equation}\label{eq: 1/n+1 do d dlugosc}
\left|d-\frac{1}{n+1} \right |^{h_n} = \Bigg[\sum\limits_{k = q_1 + 1}^{n}  \left( \frac{1}{k} - \frac{1}{k+1} \right) + \left( \frac{1}{q_1} - \frac{1}{q_1 + 1} \right) \cdot \sum \limits_{k = 1}^{q_2 - 1} \left( \frac{1}{k} - \frac{1}{k+1} \right) + 
\end{equation}
\[
+ \prod\limits_{i = 1}^{2} (\frac{1}{q_i}-\frac{1}{q_i+1}) \cdot \sum\limits_{k = q_3 + 1}^{n} \left( \frac{1}{k} - \frac{1}{k+1} \right) + \prod\limits_{i = 1}^{3} (\frac{1}{q_i}-\frac{1}{q_i+1}) \cdot \sum\limits_{k = 1}^{q_4 - 1} \left( \frac{1}{k} - \frac{1}{k+1} \right) + \dots +
\]
\[
+ \sum\limits_{i = 1}^\infty \prod \limits_{j = 1}^{2i} \left (\frac{1}{q_{j}} - \frac{1}{q_{j} + 1} \right) \cdot\sum\limits_{k = n +1}^{\infty} (\frac{1}{k} - \frac{1}{k+1}) \Bigg]^{h_n}
\]
Now, using Lemma \ref{lem: wypuklosc z hn} with $m_n\left(\left[\frac{1}{n+1},d \right]\right)$ and taking the quotient, we get
\[
\frac{\left|d-\frac{1}{n+1} \right |^{h_n}}{m_n\left(\left[\frac{1}{n+1},d \right]\right)} \leq
\]
\begin{adjustwidth}{-50pt}{0pt}
\[
\leq \left[1 + \frac{\sum\limits_{i = 1}^\infty \prod \limits_{j = 1}^{2i} \left (\frac{1}{q_{j}} - \frac{1}{q_{j} + 1} \right) \cdot\sum\limits_{k = n +1}^{\infty} (\frac{1}{k} - \frac{1}{k+1})}{\sum\limits_{k = q_1 + 1}^{n}  \left( \frac{1}{k} - \frac{1}{k+1} \right)+ \sum\limits_{i = 1}^\infty \prod \limits_{j = 1}^{2i} \left (\frac{1}{q_{j}} - \frac{1}{q_{j} + 1} \right) \cdot\left(\frac{1}{q_{2i+1}}- \frac{1}{n+1}\right)+ \sum\limits_{i = 1}^\infty \prod \limits_{j = 1}^{2i+1} \left (\frac{1}{q_{j}} - \frac{1}{q_{j} + 1} \right) \cdot\left(1- \frac{1}{q_{2i+2}}\right)} \right]^{h_n}  \leq
\]
\[
\leq \left[1 + \frac{\sum\limits_{i = 1}^\infty \prod \limits_{j = 1}^{2i} \left (\frac{1}{q_{j}} - \frac{1}{q_{j} + 1} \right) \cdot\left(\frac{1}{n+1}\right)}{\sum\limits_{k = q_1 + 1}^{n}  \left( \frac{1}{k} - \frac{1}{k+1} \right)+ \sum\limits_{i = 1}^\infty \prod \limits_{j = 1}^{2i} \left (\frac{1}{q_{j}} - \frac{1}{q_{j} + 1} \right) \cdot\left(\frac{1}{q_{2i+1}}- \frac{1}{n+1}\right)+ \sum\limits_{i = 1}^\infty \prod \limits_{j = 1}^{2i+1} \left (\frac{1}{q_{j}} - \frac{1}{q_{j} + 1} \right) \cdot\left(1- \frac{1}{q_{2i+2}}\right)} \right]^{h_n} \leq 
\]
\begin{equation}\label{eq: nierownosc z szeregiem}
\leq \left[1 + \frac{\left(\frac{1}{n+1}\right)\left(\frac{1}{q_1} - \frac{1}{q_1+1}\right)\left(\frac{1}{q_2} - \frac{1}{q_2+1}\right)\cdot\sum\limits_{i = 0}^\infty \left( \frac{1}{2}\right)^{i+1} }{\sum\limits_{k = q_1 + 1}^{n}  \left( \frac{1}{k} - \frac{1}{k+1} \right)+ \sum\limits_{i = 1}^\infty \prod \limits_{j = 1}^{2i} \left (\frac{1}{q_{j}} - \frac{1}{q_{j} + 1} \right) \cdot\left(\frac{1}{q_{2i+1}}- \frac{1}{n+1}\right)+ \sum\limits_{i = 1}^\infty \prod \limits_{j = 1}^{2i+1} \left (\frac{1}{q_{j}} - \frac{1}{q_{j} + 1} \right) \cdot\left(1- \frac{1}{q_{2i+2}}\right)} \right]^{h_n} \leq 
\end{equation}
\end{adjustwidth}
\[
\leq \left[1 + \frac{2}{n} \right]^{h_n}
\]
where the inequality \eqref{eq: nierownosc z szeregiem} comes form the fact that $\frac{1}{q_i} - \frac{1}{q_{i+1}} \leq \frac{1}{2}$ for every $i = 1, 2, \dots $. This implies that the numerator is limited from above by the following expression $\frac{1}{n} \cdot \left( \frac{1}{q_1} - \frac{1}{q_1+1} \right) \cdot \left( \frac{1}{q_2} - \frac{1}{q_2+1} \right) \cdot\sum \limits_{k = 1}^{\infty} \left( \frac{1}{2}\right)^k $. The last inequality follows from the fact, that $d > \frac{1}{q_1+1}$ and thus the denominator of the expression is limited from below by $\frac{1}{q_1+1} - \frac{1}{n+1}$.
From this, we get 
\[
\frac{m_n\left(\left[\frac{1}{n+1},d \right]\right)}{\left|d-\frac{1}{n+1} \right |^{h_n}} \geq \frac{1}{\left[1+\frac{2}{n}\right]^{h_n}} \geq \frac{1}{1+\frac{2}{n}} = \frac{n}{n+2} = 1 - \frac{2}{n+2}
\]
which concludes the proof.
\end{proof}
Very similar reasoning can be used to show the following. We provide the proof for completeness.
\begin{lemma}\label{lem: jednostajnosc bledu 2}
\[
\liminf \limits_{n\to\infty} \left( \inf\limits_{d < \frac{1}{2}} \left\{\frac{m_n\left(\left[d, 1 \right]\right)}{\left|1 - d \right |^{h_n}} \right \} \right) \geq 1 
\]
\end{lemma}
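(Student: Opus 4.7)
The plan is to mirror the proof of Lemma \ref{lem: jednostajnosc bledu 1}, working symmetrically from the right endpoint $1$ instead of the left endpoint $\frac{1}{n+1}$.

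As a preliminary reduction, if $d \notin J_n$ then $d$ lies in a ``gap'' of $J_n$ and I can replace $d$ by $\hat{d} := \sup\{x \in J_n : x \leq d\}$; this leaves $m_n([d,1])$ unchanged (the gap has $m_n$-measure zero) while $|1-\hat{d}| \geq |1-d|$, so the ratio only decreases, and $\hat{d} \leq d < \frac{1}{2}$ preserves the hypothesis. It therefore suffices to prove the estimate assuming $d \in J_n$ with $d < \frac{1}{2}$.

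For such $d$ there is a unique address $(q_j)_{j \geq 1}$ with $q_j \in \{1, \dots, n\}$ and $\{d\} = \bigcap_{k \geq 1} g_{q_1} \circ \dots \circ g_{q_k}([0,1])$; the condition $d < \frac{1}{2}$ forces $q_1 \geq 2$. Setting $e_1 := f_{q_1}(d)$, $e_2 := f_{q_2}(e_1)$, and so on, and alternately decomposing $[d,1] = [\frac{1}{q_1},1] \cup g_{q_1}([0,e_1])$, then $[0,e_1] = [0,\frac{1}{q_2+1}] \cup g_{q_2}([e_2,1])$, etc., I obtain an expansion of $m_n([d,1])$ structurally identical to \eqref{eq: rownanie na mn}: it starts with the sum $\sum_{k=1}^{q_1-1}\bigl(\frac{1}{k}-\frac{1}{k+1}\bigr)^{h_n}$ (the whole first-generation cylinders inside $[\frac{1}{q_1},1]$) and alternates between partial sums of the form $\sum_{k=1}^{q_{2m+1}-1}$ and $\sum_{k=q_{2m}+1}^{n}$, each weighted by a product $\prod_{i=1}^{r}(\frac{1}{q_i}-\frac{1}{q_i+1})^{h_n}$. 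The same decomposition applied to $|1-d|$ yields the same formula but with linear terms in place of $h_n$-th powers and with the even-level sums running to $\infty$ rather than $n$ (the discrepancy reflects the mass that $m_n$ assigns $0$ to the piece $[0,\frac{1}{n+1}]$ at each stage).

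Let $S$ denote the truncated version of the length expansion (upper limit $n$ in the even-level sums). By Lemma \ref{lem: wypuklosc z hn}, $m_n([d,1]) \geq S^{h_n}$, and the discrepancy $|1-d|-S$ equals $\frac{1}{n+1}\sum_{m=1}^{\infty}\prod_{j=1}^{2m-1}\bigl(\frac{1}{q_j}-\frac{1}{q_j+1}\bigr)$, completely analogous to the corresponding step in the proof of Lemma \ref{lem: jednostajnosc bledu 1}. Since $\frac{1}{q_j}-\frac{1}{q_j+1} \leq \frac{1}{2}$ for every $j$ and, crucially, $\frac{1}{q_1}-\frac{1}{q_1+1} \leq \frac{1}{6}$ (from $q_1 \geq 2$), a geometric-series estimate bounds the discrepancy by $\frac{4}{3(n+1)q_1(q_1+1)}$; combined with the trivial lower bound $S \geq 1 - \frac{1}{q_1} = \frac{q_1-1}{q_1}$, this yields $|1-d|/S \leq 1 + \frac{4}{9(n+1)}$. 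Since $h_n < 1$, raising to the $h_n$-th power preserves the inequality, so $m_n([d,1])/|1-d|^{h_n} \geq \bigl(1 + \frac{4}{9(n+1)}\bigr)^{-h_n}$, which tends to $1$ as $n \to \infty$, giving the claimed $\liminf \geq 1$.

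The main obstacle is the bookkeeping for the alternating decomposition: in Lemma \ref{lem: jednostajnosc bledu 1} the first block was a sum of the form $\sum_{k=q_1+1}^{n}(\frac{1}{k}-\frac{1}{k+1})^{h_n}$, which already absorbed the left tail near $\frac{1}{n+1}$, whereas here the first block $[\frac{1}{q_1},1]$ consists entirely of whole first-generation cylinders, so the ``odd'' and ``even'' levels effectively swap roles and the tail contribution $\frac{1}{n+1}$ now enters at the even levels. Once this reorganization is carried out carefully, the geometric-series tail estimate is essentially identical to the one used in the proof of Lemma \ref{lem: jednostajnosc bledu 1}.
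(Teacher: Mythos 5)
Your proposal is correct and follows essentially the same route as the paper's proof: the same reduction to $d \in J_n$ via the gap endpoint, the same alternating cylinder expansion of $m_n([d,1])$ and $|1-d|$, the same application of Lemma \ref{lem: wypuklosc z hn}, and the same geometric-series bound on the tail discrepancy coming from the truncation at level $n$. Your bookkeeping (in particular the tail $\frac{1}{n+1}\sum_{m\geq 1}\prod_{j=1}^{2m-1}(\frac{1}{q_j}-\frac{1}{q_j+1})$ and the use of $q_1 \geq 2$ to bound the denominator below) matches the paper's, and in fact fixes a small indexing inconsistency in the paper's version of that tail sum.
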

\begin{proof}
Fix some $n > 0$. Take arbitrary $d<\frac{1}{2}$. We claim that in order to estimate the ratio
\begin{equation}\label{eq: star2}
\frac{m_n\left(\left[d,1 \right]\right)}{\left|1-d\right |^{h_n}}
\end{equation}
from below, one can assume that $d \in J_n$, $d < \frac{1}{2}$. Indeed, if $d \not \in J_n$ (i.e. $d$ is in some "gap" of the Cantor set $J_n$), then we can replace $d$ by $\hat{d} := \inf \{x \in J_n: x < d \}$ (the left endpoint of the "gap"). Then $m_n([d,1]) = m_n([ \hat{d},1])$ and $\left | 1- d\right| \leq \left | 1 - \hat{d}\right| $  and, clearly, $\hat{d} \leq  d < \frac{1}{2}$. So
\[
\frac{m_n\left(\left[d,1 \right]\right)}{\left|1-d\right |^{h_n}} \geq \frac{m_n\left(\left[\hat{d},1 \right]\right)}{\left|1-\hat{d}\right |^{h_n}}
\]
So, from now on we assume that $d \in J_n$, $d < \frac{1}{2}$. 
\par Since $d \in J_n$, there exists a unique sequence of integers $(q_j)_{j=1}^\infty$, $q_j \leq n$ such that 
\[
\{ d \} = \bigcap \limits_{k=1}^\infty g_{q_1} \circ \dots \circ g_{q_k}([0,1])
\]
The measure $m_n([d,1])$ can be expressed as the sum of the measures of the cylinder sets (i.e. intervals from the collection $\mathcal{F}^n_l$ located to the right of $d$). Summing up first the measures of the cylinder set of the first generation (see Figure \ref{fig: intervals to the right}), we obtain 
\[
\sum \limits_{k = 1}^{q_1-1} \left(\frac{1}{k} - \frac{1}{k+1} \right)^{h_n}
\]
\begin{figure}
    \centering

\tikzset{every picture/.style={line width=0.75pt}} 

\begin{tikzpicture}[x=0.75pt,y=0.75pt,yscale=-1,xscale=1]

\draw    (242.83,26) -- (513,26) ;
\draw    (159,35) -- (159,16) ;
\draw    (512,36) -- (512,17) ;
\draw    (393,36) -- (393,17) ;
\draw    (321,36) -- (321,17) ;
\draw [color={rgb, 255:red, 208; green, 2; blue, 27 }  ,draw opacity=1 ]   (363,35) -- (363,16) ;
\draw    (242,36) -- (242,17) ;
\draw    (321,36) -- (162.69,136.92) ;
\draw [shift={(161,138)}, rotate = 327.48] [color={rgb, 255:red, 0; green, 0; blue, 0 }  ][line width=0.75]    (10.93,-3.29) .. controls (6.95,-1.4) and (3.31,-0.3) .. (0,0) .. controls (3.31,0.3) and (6.95,1.4) .. (10.93,3.29)   ;
\draw    (161,148) -- (515,148) ;
\draw    (161,157) -- (161,138) ;
\draw    (514,158) -- (514,139) ;
\draw    (402,158) -- (402,139) ;
\draw [color={rgb, 255:red, 208; green, 2; blue, 27 }  ,draw opacity=1 ]   (353,158) -- (353,139) ;
\draw    (285,158) -- (285,139) ;
\draw    (210,158) -- (210,139) ;
\draw    (393,36) -- (512.48,137.7) ;
\draw [shift={(514,139)}, rotate = 220.41] [color={rgb, 255:red, 0; green, 0; blue, 0 }  ][line width=0.75]    (10.93,-3.29) .. controls (6.95,-1.4) and (3.31,-0.3) .. (0,0) .. controls (3.31,0.3) and (6.95,1.4) .. (10.93,3.29)   ;

\draw (154,42) node [anchor=north west][inner sep=0.75pt]   [align=left] {0};
\draw (507,43) node [anchor=north west][inner sep=0.75pt]   [align=left] {1};
\draw (359,42) node [anchor=north west][inner sep=0.75pt]  [color={rgb, 255:red, 208; green, 2; blue, 27 }  ,opacity=1 ] [align=left] {d};
\draw (449,51) node [anchor=north west][inner sep=0.75pt]   [align=left] {...};
\draw (221,44) node [anchor=north west][inner sep=0.75pt]   [align=left] {$\displaystyle \frac{1}{n+1}$};
\draw (348,166) node [anchor=north west][inner sep=0.75pt]  [color={rgb, 255:red, 208; green, 2; blue, 27 }  ,opacity=1 ] [align=left] {d};
\draw (231.78,163) node [anchor=north west][inner sep=0.75pt]   [align=left] {...};
\draw (255.46,166) node [anchor=north west][inner sep=0.75pt]  [font=\footnotesize] [align=left] {$\displaystyle g_{q_{1}} \circ g_{q_{2} \ }( 0)$};
\draw (299,47) node [anchor=north west][inner sep=0.75pt]   [align=left] {$\displaystyle \frac{1}{q_{1} +1}$};
\draw (383,49) node [anchor=north west][inner sep=0.75pt]   [align=left] {$\displaystyle \frac{1}{q_{1}}$};
\draw (541,17.4) node [anchor=north west][inner sep=0.75pt]    {$\mathcal{F}_{1}^{n}$};
\draw (138,161) node [anchor=north west][inner sep=0.75pt]   [align=left] {$\displaystyle \frac{1}{q_{1} +1}$};
\draw (505,160) node [anchor=north west][inner sep=0.75pt]   [align=left] {$\displaystyle \frac{1}{q_{1}}$};
\draw (531,139.4) node [anchor=north west][inner sep=0.75pt]    {$\mathcal{F}_{2}^{n}$};
\draw (370.46,166) node [anchor=north west][inner sep=0.75pt]  [font=\footnotesize] [align=left] {$\displaystyle g_{q_{1}} \circ g_{q_{2} +1}( 0)$};

\end{tikzpicture}

\caption{Length and measure of the intervals to the right of $d$.}
\label{fig: intervals to the right}
\end{figure}
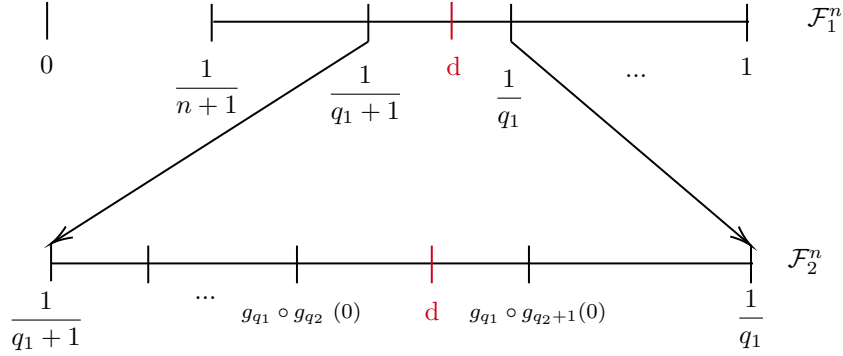
Now, looking at the cylinder set containing $d$, (i.e. the interval $[\frac{1}{q_1+1}, \frac{1}{q_1}]$) we see that the cylinder sets of the second generations (i.e. the elements of the collection $\mathcal{F}^n_2$) contained in $[\frac{1}{q_1+1}, \frac{1}{q_1}]$ and located to the right of $d$ have length 
\[
\left( \frac{1}{q_1} -\frac{1}{q_1+1} \right) \cdot \left(\frac{1}{k} - \frac{1}{k+1} \right) 
\] $k = q_2+1, \dots, n$, and measure 
\[
\left( \frac{1}{q_1} -\frac{1}{q_1+1} \right)^{h_n} \cdot \left(\frac{1}{k} - \frac{1}{k+1} \right)^{h_n}
\]
Proceeding by induction, we easily conclude that the measure $m_n\left(\left[d,1 \right]\right)$ can be expressed in the following form
\begin{equation}\label{eq: rownanie na mn2}
m_n([1 - d]) = \sum\limits_{k = 1}^{q_1 - 1}  \left( \frac{1}{k} - \frac{1}{k+1} \right)^{h_n} + \left( \frac{1}{q_1} - \frac{1}{q_1 + 1} \right)^{h_n} \cdot \sum \limits_{k = q_2 + 1 }^{n} \left( \frac{1}{k} - \frac{1}{k+1} \right)^{h_n} +
\end{equation}
\[
+ \prod\limits_{i = 1}^{2} \left(\frac{1}{q_i}-\frac{1}{q_i+1}\right)^{h_n} \cdot \sum\limits_{k = 1}^{q_3 - 1} \left( \frac{1}{k} - \frac{1}{k+1} \right)^{h_n} + \prod\limits_{i = 1}^{3} \left(\frac{1}{q_i}-\frac{1}{q_i+1}\right)^{h_n} \cdot \sum\limits_{k = q_4 + 1}^{n} \left( \frac{1}{k} - \frac{1}{k+1} \right)^{h_n} + \dots
\]
Analogously, the value $|1-d|^{h_n}$ can be expressed as follows
\[
\left|1 - d \right |^{h_n} = \Bigg[\sum\limits_{k = 1}^{q_1 - 1}  \left( \frac{1}{k} - \frac{1}{k+1} \right) + \left( \frac{1}{q_1} - \frac{1}{q_1 + 1} \right) \cdot \sum \limits_{k = q_2 + 1 }^{\infty} \left( \frac{1}{k} - \frac{1}{k+1} \right) + 
\]
\[
+ \prod\limits_{i = 1}^{2} \left(\frac{1}{q_i}-\frac{1}{q_i+1}\right) \cdot \sum\limits_{k = 1}^{q_3 - 1} \left( \frac{1}{k} - \frac{1}{k+1} \right) + \prod\limits_{i = 1}^{3} \left(\frac{1}{q_i}-\frac{1}{q_i+1}\right) \cdot \sum\limits_{k = q_4 + 1}^{\infty} \left( \frac{1}{k} - \frac{1}{k+1} \right) + \dots\Bigg]^{h_n}
\]
Now, clearly each summand
\[
\prod\limits_{i = 1}^{2k+1}\left(\frac{1}{q_i} - \frac{1}{q_i +1} \right) \cdot \sum\limits_{k = q_{2k+2}+1}^\infty \left(\frac{1}{k} - \frac{1}{k+1}\right)
\]
can be divided into two sums
\[
\prod\limits_{i = 1}^{2k+1}\left(\frac{1}{q_i} - \frac{1}{q_i +1} \right) \cdot \sum\limits_{k = q_{2k+2}+1}^n \left(\frac{1}{k} - \frac{1}{k+1}\right) + \prod\limits_{i = 1}^{2k+1}\left(\frac{1}{q_i} - \frac{1}{q_i +1} \right) \cdot \sum\limits_{k = n+1}^\infty \left(\frac{1}{k} - \frac{1}{k+1}\right)
\]
Note that the first sum corresponds to the sum appearing in the expression for $m_n([d, 1])$. Grouping expressions that occur also in the formula for $m_n\left(\left[d, 1 \right]\right) $ yields the following expression for $\left|1-d \right |^{h_n}$
\begin{equation}\label{eq: 1/n+1 do d dlugosc2}
\left|1 - d \right |^{h_n} = \Bigg[\sum\limits_{k = 1}^{q_1 - 1}  \left( \frac{1}{k} - \frac{1}{k+1} \right) + \left( \frac{1}{q_1} - \frac{1}{q_1 + 1} \right) \cdot \sum \limits_{k = q_2 + 1 }^{n} \left( \frac{1}{k} - \frac{1}{k+1} \right) + 
\end{equation}
\[
+ \prod\limits_{i = 1}^{2} \left(\frac{1}{q_i}-\frac{1}{q_i+1}\right) \cdot \sum\limits_{k = 1}^{q_3 - 1} \left( \frac{1}{k} - \frac{1}{k+1} \right) + \prod\limits_{i = 1}^{3} \left(\frac{1}{q_i}-\frac{1}{q_i+1}\right) \cdot \sum\limits_{k = q_4 + 1}^{n} \left( \frac{1}{k} - \frac{1}{k+1} \right) + \dots
\]
\[ 
+ \sum\limits_{i = 1}^\infty \prod \limits_{j = 1}^{2i+1} \left (\frac{1}{q_{j}} - \frac{1}{q_{j} + 1} \right) \cdot\sum\limits_{k = n +1}^{\infty} (\frac{1}{k} - \frac{1}{k+1}) \Bigg]^{h_n}
\]
Now, using Lemma \ref{lem: wypuklosc z hn} with $m_n\left(\left[d, 1 \right]\right)$ and taking the quotient, we get
\[
\frac{\left|1 - d \right |^{h_n}}{m_n([1 - d])} \leq
\]
\begin{adjustwidth}{-50pt}{0pt}
\[
\leq \left[1 + \frac{\sum\limits_{i = 1}^\infty \prod \limits_{j = 1}^{2i+1} \left (\frac{1}{q_{j}} - \frac{1}{q_{j} + 1} \right) \cdot\sum\limits_{k = n +1}^{\infty} (\frac{1}{k} - \frac{1}{k+1}) }{\sum\limits_{k = 1}^{q_1-1}  \left( \frac{1}{k} - \frac{1}{k+1} \right) + \sum\limits_{i = 0}^\infty \prod \limits_{j = 1}^{2i+1} \left (\frac{1}{q_{j}} - \frac{1}{q_{j} + 1} \right)\cdot\left(\frac{1}{q_{2i+2}}- \frac{1}{n+1}\right) + \sum\limits_{i = 1}^\infty \prod \limits_{j = 1}^{2i} \left (\frac{1}{q_{j}} - \frac{1}{q_{j} + 1} \right)\cdot\left(1- \frac{1}{q_{2i+1}}\right)} \right]^{h_n}  =
\]
\[
=\left[1 + \frac{ \frac{1}{n+1}\cdot \left( \frac{1}{q_1} - \frac{1}{q_1 + 1} \right) \cdot \sum\limits_{i = 1}^\infty \prod \limits_{j = 1}^{i} \left (\frac{1}{q_{2j}} - \frac{1}{q_{2j} + 1} \right) \cdot \left(\frac{1}{q_{2j+1}} - \frac{1}{q_{2j+1} + 1} \right)}{\sum\limits_{k = 1}^{q_1-1}  \left( \frac{1}{k} - \frac{1}{k+1} \right) + \sum\limits_{i = 0}^\infty \prod \limits_{j = 1}^{2i+1} \left (\frac{1}{q_{j}} - \frac{1}{q_{j} + 1} \right)\cdot\left(\frac{1}{q_{2i+2}}- \frac{1}{n+1}\right) + \sum\limits_{i = 1}^\infty \prod \limits_{j = 1}^{2i} \left (\frac{1}{q_{j}} - \frac{1}{q_{j} + 1} \right)\cdot\left(1- \frac{1}{q_{2i+1}}\right)} \right]^{h_n}  \leq
\]
\begin{equation}\label{eq: nierownosc z 1/2}
\leq \left[1 + \frac{ \frac{1}{n+1}\cdot \left( \frac{1}{q_1} - \frac{1}{q_1 + 1} \right) \cdot \sum\limits_{i = 1}^\infty \left(\frac{1}{2}\right)^{i}}{\sum\limits_{k = 1}^{q_1-1}  \left( \frac{1}{k} - \frac{1}{k+1} \right) + \sum\limits_{i = 0}^\infty \prod \limits_{j = 1}^{2i+1} \left (\frac{1}{q_{j}} - \frac{1}{q_{j} + 1} \right)\cdot\left(\frac{1}{q_{2i+2}}- \frac{1}{n+1}\right) + \sum\limits_{i = 1}^\infty \prod \limits_{j = 1}^{2i} \left (\frac{1}{q_{j}} - \frac{1}{q_{j} + 1} \right)\cdot\left(1- \frac{1}{q_{2i+1}}\right)} \right]^{h_n}    \leq 
\end{equation}
\end{adjustwidth}
\[
\left[1 + \frac{2}{n} \right]^{h_n}
\]
where the inequality \eqref{eq: nierownosc z 1/2} comes form the fact that $\frac{1}{q_i} - \frac{1}{q_{i+1}} \leq \frac{1}{2}$ for every $i = 1, 2, \dots $. This implies that the numerator is limited from above by the expression $\frac{1}{n} \cdot \left( \frac{1}{q_1} - \frac{1}{q_1+1} \right)\cdot\sum \limits_{k = 1}^{\infty} \left( \frac{1}{2}\right)^k $. The last inequality follows from the fact, that $d \leq \frac{1}{q_1}$. From this, we get 
\[
\frac{m_n([1 - d])}{\left|1 - d \right |^{h_n}} \geq \frac{1}{\left[1+\frac{2}{n}\right]^{h_n}} \geq \frac{1}{1+\frac{2}{n}} = \frac{n}{n+2} = 1 - \frac{2}{n+2}
\]
which concludes the proof.

\end{proof}
\bibliography{main}

\begin{thebibliography}{10}

\bibitem{AyerStri}
Elizabeth Ayer and Robert~S. Strichartz.
\newblock Exact hausdorff measure and intervals of maximum density for cantor sets.
\newblock {\em Transactions of the American Mathematical Society}, 351:3725--3741, 1999.

\bibitem{Falconerdrugi}
Kenneth Falconer.
\newblock {\em Techniques in Fractal Geometry}.
\newblock John Wiley \& Sons, 1997.

\bibitem{Falconer}
Kenneth Falconer.
\newblock {\em Fractal Geometry: Mathematical Foundations and Applications, 2nd Edition}.
\newblock John Wiley \& Sons, 2004.

\bibitem{Feng}
De–Jun Feng.
\newblock Exact packing measure of linear cantor sets.
\newblock {\em Mathematische Nachrichten}, 248-249:102--109, 2003.

\bibitem{Feng2}
De–Jun Feng.
\newblock Exact packing measure of linear cantor sets.
\newblock {\em Mathematische Nachrichten}, 248-249:102--109, 2003.

\bibitem{Hensley}
Doug Hensley.
\newblock Continued fraction cantor sets, hausdorff dimension, and functional analysis.
\newblock {\em Journal of Number Theory}, 40:336--358, 1992.

\bibitem{jarnik1}
Vojtĕch Jarník.
\newblock Zur metrischen theorie der diophantischen approximationen.
\newblock {\em Prace Matematyczno-Fizyczne}, 36:91--106, 1928-1929.

\bibitem{Matilla}
Pertti Mattila.
\newblock {\em Geometry of Sets and Measures in Euclidean Spaces: Fractals and Rectifiability}.
\newblock Cambridge University Press, 1995.

\bibitem{przyturb}
Feliks Przytycki and Mariusz Urbański.
\newblock {\em Conformal Fractals: Ergodic Theory Methods}.
\newblock Cambridge University Press, 2010.

\bibitem{UZ}
Mariusz Urba\'nski and Anna Zdunik.
\newblock Continuity of the {Hausdorff} {Measure} of {Continued} {Fractions} and {Countable} {Alphabet} {Iterated} {Function} {Systems}.
\newblock {\em Journal de th\'eorie des nombres de Bordeaux}, 28:261--286, 2016.

\end{thebibliography}

\end{document}